\definecolor{darkgreen}{rgb}{0.0, 0.7, 0.0}
\definecolor{purple}{rgb}{0.5, 0.0, 0.5}
\definecolor{red}{rgb}{0.8, 0.2, 0.0}
\newtheorem{thm}{Theorem}[section]
\newtheorem{lemma}[thm]{Lemma}
\newtheorem{prop}[thm]{Proposition}
\newtheorem{cor}[thm]{Corollary}
\newtheorem{claim}[thm]{Claim}
\numberwithin{equation}{section}
\theoremstyle{definition}
\newtheorem{defi}[thm]{Definition}
\newtheorem{question}[thm]{Question}
\newtheorem{stremark}[thm]{Remark}
\theoremstyle{remark}
\newtheorem{remark}[thm]{Remark}
\newcommand{\Q}{\mathbb{Q}}
\newcommand{\Pic}{\operatorname{Pic}}
\def \Im{{\rm Im}}
\def \PP{\mathbb{P}}
\def \GL{{\rm GL}}
\def \P{\mathcal P}
\def\I{\mathcal I}
\def \E{\mathcal E}
\def \H{\mathcal H}
\def\O{\mathcal O}
\def\M0{\mathcal M^0}
\def\mapright#1{\smash{\mathop{\longrightarrow}\limits^{#1}}}
\DeclareMathOperator{\codim}{{codim}}
\DeclareMathOperator{\Coker}{{Coker}}
\DeclareMathOperator{\Cliff}{{Cliff}}
\DeclareMathOperator{\Ker}{{Ker}}
\begin{document}

\title{%
On the extendability of projective varieties: a survey. \\ 
\MakeLowercase{(with an appendix by} T\MakeLowercase{homas} D\MakeLowercase{edieu)}}


\dedicatory{\normalsize  \ Dedicated to Ciro Ciliberto on the occasion of his 70th birthday. \hskip 6cm
What I learned from him, both from a mathematical and a human point of view, is invaluable.} 

\author[A.F. Lopez]{Angelo Felice Lopez}

\address{\hskip -.43cm Dipartimento di Matematica e Fisica, Universit\`a di Roma
Tre, Largo San Leonardo Murialdo 1, 00146, Roma, Italy. e-mail {\tt lopez@mat.uniroma3.it}}

\address{\hskip -.43cm Institut de Math\'ematiques de Toulouse~; UMR5219. Universit\'e de Toulouse~; CNRS.
UPS IMT, F-31062 Toulouse Cedex 9, France. \tt{thomas.dedieu@math.univ-toulouse.fr}.}

\thanks{* Research partially supported by  PRIN ``Advances in Moduli Theory and Birational Classification'' and GNSAGA-INdAM}

\thanks{{\it Mathematics Subject Classification} : Primary 14N05, 14J40. Secondary 14J28, 14H51.}

\begin{abstract} 
We give a survey of the incredibly beautiful amount of geometry involved with the problem of realizing a projective variety as hyperplane section of another variety.
\end{abstract}

\maketitle

\section{Introduction}

At the beginning of the '90s, I attended a seminar talk by Ciro Ciliberto at the University of Milan. The main upshot, beautifully conveyed by the speaker, was to make a connection between two apparently unrelated theorems that had appeared recently and to investigate the consequences of this awareness.

The story that followed in the subsequent years, and the research still going on today, will be recollected in this survey, through the magnifying lens of my views on the subject, very much influenced by that talk. 

To state the theorems in question, we start with some definitions and notation.

\begin{defi}
Let $X \subset \PP^r$ be an irreducible nondegenerate variety of codimension at least $1$. Let $k \ge 1$ be an integer. We say that $X$ is {\it $k$-extendable} if there exists a variety $Y \subset \PP^{r+k}$ different from a cone, with $\dim Y = \dim X +k$ and having $X$ as a section by an $r$-dimensional linear space. We say that $X$ is {\it precisely $k$-extendable} if it is $k$-extendable but not $(k+1)$-extendable. The variety $Y$ is called a {\it $k$-extension of $X$}. We say that $X$ is {\it extendable} if it is $1$-extendable.
\end{defi}

It is, of course, a basic question in projective geometry to understand when a variety is extendable and if so, how much.
Even when a variety is extendable, it can be extendable in different ways and with chains of extensions of different lengths (see the example in \cite{z}). 

On the other hand, unless one has a very good knowledge of the variety and its very ample line bundles, it is usually a difficult but fascinating problem. 

A general remark to be made is that two general approaches have been predominant. Researchers have often taken an "optimistic" or a "pessimistic" approach. The first one being: fix $X$ and try to prove that it is not extendable or to classify all of its extensions. The second one: start with $Y$, study its hyperplane sections and try to prove that such a $Y$ cannot exist. We will give examples of both.

To give a measure in this direction, let us define

\begin{defi}
\label{alfa}
Let $X \subset \PP^r$ be a smooth irreducible nondegenerate variety of codimension at least $1$ with normal bundle $N_X$. We set
$$\alpha(X) = h^0(N_X(-1))-r-1.$$
\end{defi}

The first result, due to F.L. Zak \cite{z} and S. L'vovsky \cite{lv} (see also \cite{ba2, bs}), is as follows.

\begin{thm} {\rm (Zak-L'vovsky's theorem)}
\label{zak}

Let $X \subset \PP^r$ be a smooth irreducible nondegenerate variety of codimension at least $1$ and suppose that $X$ is not a quadric. If $\alpha(X) \le 0$, then $X$ is not extendable. 

Given an integer $k \ge 2$, suppose that either:
\begin{itemize}
\item [(i)] $\alpha(X) < r$ (L'vovsky's version), or
\item [(ii)] $H^0(N_X(-2))=0$ (Zak's version).
\end{itemize} 
If $\alpha(X) \le k-1$, then $X$ is not $k$-extendable.
\end{thm}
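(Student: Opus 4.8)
The plan is to recast extendability cohomologically and to reduce the entire statement to the single inequality: \emph{if $X$ is $k$-extendable, then $h^0(N_X(-1)) \ge r+1+k$}, i.e. $\alpha(X) \ge k$. Both assertions are then its contrapositive (with $k=1$ for the first). The object governing everything is the normal bundle sequence of $X$ inside a one-step-larger projective space: fixing a hyperplane $\PP^r \subset \PP^{r+1}$ one has
$$0 \to N_X \to N_{X/\PP^{r+1}} \to \O_X(1) \to 0,$$
and, twisting by $\O_X(-1)$,
$$0 \to N_X(-1) \to N_{X/\PP^{r+1}}(-1) \to \O_X \to 0. \qquad (\dagger)$$
I would read $H^0(N_X(-1))$ as the space of first-order deformations of $X$ "as a hyperplane section": it contains a canonical $(r+1)$-dimensional subspace $W$ coming from the family of cones over $X$ in $\PP^{r+1}$ with varying vertex, and by definition $\alpha(X) = h^0(N_X(-1)) - \dim W$ is exactly the surplus of genuine, non-conical directions.

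First I would treat $k=1$. A $1$-extension $Y \subset \PP^{r+1}$ with $X = Y \cap \PP^r$ gives, for each $x \in X$, the line $T_xY/T_xX$ recording the direction in which $Y$ leaves the hyperplane; globally this is the inclusion $N_{X/Y} = \O_X(1) \hookrightarrow N_{X/\PP^{r+1}}$, that is, a splitting of the sequence above. Hence the coboundary $H^0(\O_X) \to H^1(N_X(-1))$ of $(\dagger)$ vanishes, and the splitting yields a class in $H^0(N_X(-1))$ measuring the deviation of $Y$ from a cone. The crux is the dichotomy: either this class lies outside $W$, so that $h^0(N_X(-1)) \ge r+2$ and $\alpha(X) \ge 1$, contradicting $\alpha(X) \le 0$; or it lies in $W$, meaning $Y$ is \emph{infinitesimally} a cone. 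A separate analysis of when a non-conical $Y$ can be indistinguishable to first order from a cone shows the second alternative forces $X$ to be a quadric, which is exactly why quadrics are excluded.

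For $k \ge 2$ I would argue by induction along a flag of extensions. Cutting $Y \subset \PP^{r+k}$ with general hyperplanes through $\PP^r$ produces a chain $X = X_0 \subset X_1 \subset \cdots \subset X_k = Y$ with $X_i \subset \PP^{r+i}$ and $X_i = X_{i+1} \cap \PP^{r+i}$, so each $X_{i+1}$ is a $1$-extension of $X_i$ and contributes, as above, a new direction in $H^0(N_{X_i}(-1))$. Transporting these back to $X$ along the restriction maps $H^0(N_{X_i}(-1)) \to H^0(N_X(-1))$ induced by the hyperplane-section sequences, I would assemble $k$ classes and show that, together with $W$, they are linearly independent, giving $h^0(N_X(-1)) \ge r+1+k$. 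This is where the two hypotheses intervene: Zak's vanishing $H^0(N_X(-2)) = 0$ kills, through the twisted normal sequences, the kernels of the restriction maps and the intermediate obstructions directly; L'vovsky's inequality $\alpha(X) < r$ replaces this by a global bound on the dimension of the space of extensions, which again prevents the $k$ directions from collapsing onto $W$ or onto one another.

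The formal parts — the sequence $(\dagger)$, the identification of $W$, the bookkeeping of restriction maps, and the final count $\alpha(X) = h^0(N_X(-1)) - (r+1)$ — are routine. The genuine difficulty, and the only non-formal input, is the \emph{independence/transversality} statement: that a $k$-extension really produces $k$ directions transverse to the cone directions and to one another. Controlling this is precisely what the exclusion of quadrics (for $k=1$) and the alternative hypotheses (i) and (ii) (for $k \ge 2$) are designed to do, and I expect verifying it — in particular ruling out the infinitesimal-cone degeneration and the second-order obstructions along the flag — to be the main obstacle.
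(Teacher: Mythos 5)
Your framework --- reading $H^0(N_X(-1))$ as first-order extension data, singling out the $(r+1)$-dimensional cone subspace $W$, and extracting a class from the splitting $\O_X \cong N_{X/Y}(-1) \hookrightarrow N_{X/\PP^{r+1}}(-1)$ of $(\dagger)$ --- is indeed the classical setup behind Zak's and L'vovsky's original arguments, and the formal bookkeeping you call routine is routine. But as a proof there is a genuine gap, and you have located it yourself: the entire content of the theorem is concentrated in the two steps you defer. For $k=1$, the assertion that the alternative ``class lies in $W$'' forces $X$ to be a quadric is not argued at all, and is moreover not quite the right statement: what must be shown is that this alternative forces $Y$ to be a \emph{cone}, the quadric being precisely the exception where a non-cone extension produces a class in $W$ (for a quadric $h^0(N_X(-1))=r+1$, so \emph{every} class lies in $W$, yet smooth quadric extensions exist). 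For $k\ge 2$, the linear independence of the $k$ transported classes together with $W$ is likewise only asserted, and your one sentence on L'vovsky's hypothesis (i) is a gesture with no mechanism. So what you have is a correct map of where the difficulty lives, not a route through it.

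It is worth contrasting this with how the paper's (simplified: smooth chains, Zak's version only) proof \emph{sidesteps} the transversality problem entirely, replacing it by two elementary lemmas. First, Claim \ref{ext1}: if $\alpha(X)=0$ then $H^0(N_X(-2))=0$ --- here the quadric exclusion enters, to rule out the codimension-$1$ case via $N_X(-1)\cong \O_X(d-1)$ with $d\ge 3$; in codimension $\ge 2$ the bundle $N_X(-1)$ has rank $\ge 2$ and is globally generated by Proposition \ref{inf}(ii), while a nonzero $\sigma \in H^0(N_X(-2))$ would force the basis $\sigma\otimes\tau_0,\dots,\sigma\otimes\tau_r$ of $H^0(N_X(-1))$ to generate a rank-$1$ subsheaf, a contradiction. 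Second, Proposition \ref{bd}: from $0 \to N_Y(-i-1) \to N_Y(-i) \to N_X(-i) \to 0$, the vanishing $H^0(N_X(-2))=0$ propagates to $H^0(N_Y(-2))=0$ and yields the drop $\alpha(Y) \le \alpha(X)-1$. Combined with $\alpha \ge 0$ always (Proposition \ref{inf}(iii), again global generation), the $\alpha(X)=0$ case follows by pure contradiction ($\alpha(Y)\le -1$) and the case $k\ge 2$ by a one-line downward induction --- no independence of extension classes is ever needed; it is repackaged into the numerical drop. Note that your own observation that $H^0(N_X(-2))=0$ ``kills the kernels of the restriction maps'' \emph{is} exactly the mechanism of Proposition \ref{bd}; if you want to complete your approach, the cleanest fix is to abandon the independence-of-$k$-classes formulation and prove that single inequality instead. (For the full statement with singular extensions and for version (i), the paper itself only cites Zak, L'vovsky and B\u adescu, so you should not expect an elementary completion of those parts along either route.)
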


Now to shift to the second result, let us first give the following

\begin{defi}
Let $C$ be a smooth irreducible curve. The {\it Wahl map of $C$} is the map 
$$\Phi_{\omega_C} : \Lambda^2 H^0(\omega_C) \to H^0(\omega_C^3)$$
defined by $\Phi_{\omega_C}(fdz \wedge gdz) = (fg'-gf') dz^3$.
\end{defi}

In the same years, J. Wahl \cite{w1} introduced this map (so named in the subsequent years) and proved the following seminal

\begin{thm} {\rm (Wahl's theorem)}
\label{wah}

Let $C$ be a smooth irreducible curve. If $C$ sits on a K3 surface, then $\Phi_{\omega_C}$ is not surjective.
\end{thm}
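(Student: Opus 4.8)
The plan is to deduce non-surjectivity of $\Phi_{\omega_C}$ from the \emph{extendability} of the canonical model of $C$, and then invoke Theorem~\ref{zak}. Write $g$ for the genus, and treat first the main case $g\ge 3$ with $C$ non-hyperelliptic. Set $L=\O_S(C)$. Since $S$ is a K3 surface we have $\omega_S\cong\O_S$, so adjunction gives $\omega_C\cong\O_S(C)|_C=L|_C$, and likewise $\omega_C^3\cong L^3|_C$. From $H^1(\O_S)=0$ the restriction $H^0(S,L)\to H^0(C,\omega_C)$ is surjective with one-dimensional kernel $H^0(\O_S)$, so $h^0(L)=g+1$. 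By the theorem of Saint-Donat, in the generic situation $|L|$ is very ample; it then embeds $S$ as a smooth surface $S\subset\PP^{g}$ cutting out the canonical curve $C\subset\PP^{g-1}$ as a hyperplane section. As $S$ is smooth it is not a cone over $C$, so $C$ is $1$-extendable in the sense defined above.

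Next I would set up the bridge between the Wahl map and the invariant $\alpha$ of Definition~\ref{alfa}. The governing principle is that surjectivity of $\Phi_{\omega_C}$ constrains the embedded first-order geometry of the canonical curve: combining the Euler sequence on $\PP^{g-1}$ restricted to $C$ with the conormal sequence $0\to\omega_C^{-1}\to\Omega^1_{\PP^{g-1}}|_C\to\omega_C\to 0$, one identifies the relevant graded piece of $H^0(N_C(-1))$ with the cokernel of $\Phi_{\omega_C}$, yielding $\operatorname{corank}(\Phi_{\omega_C})\ge\alpha(C)$ (an equality under mild hypotheses). Equivalently, were $\Phi_{\omega_C}$ surjective we would have $\alpha(C)\le 0$. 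This identification is the technical heart, and I expect it to be the main obstacle: one must keep careful track, through the restriction–Euler–conormal bookkeeping, of how surjectivity of the Wahl map annihilates precisely the space of genuine (non-cone) extensions.

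Granting the bridge, the conclusion is immediate. A non-hyperelliptic canonical curve of genus $g\ge 3$ is smooth, irreducible and nondegenerate of codimension $g-2\ge 1$, and is never a quadric, so Theorem~\ref{zak} applies: if $\Phi_{\omega_C}$ were surjective, then $\alpha(C)\le 0$ and hence $C$ would not be extendable, contradicting the extension $S\subset\PP^g$ built in the first paragraph. Therefore $\Phi_{\omega_C}$ is not surjective. The sections for which $|L|$ is not very ample (hyperelliptic, and small $g$) must be handled separately, but for $g\le 9$ the target satisfies $h^0(\omega_C^3)=5g-5>\binom{g}{2}=\dim\Lambda^2 H^0(\omega_C)$ so the map cannot be surjective for dimension reasons, and the remaining hyperelliptic case is settled by a direct computation. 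Finally, I note a more self-contained variant that sidesteps Theorem~\ref{zak}: restricting the surface Gaussian map $\Lambda^2 H^0(L)\to H^0(S,\Omega^1_S\otimes L^2)$ to $C$ produces a commutative square forcing $\Img(\Phi_{\omega_C})\subseteq\Img\big(H^0(S,\Omega^1_S\otimes L^2)\to H^0(C,\omega_C^3)\big)$, and this restriction fails to be surjective because Serre duality on the K3 surface, where $\omega_S\cong\O_S$, makes the obstructing cohomology group nonzero.
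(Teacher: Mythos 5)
The survey itself contains no proof of Theorem~\ref{wah}: it quotes Wahl's original argument via the graded $T^1$ of the cone \cite{w1} and the geometric proof of Beauville--M\'erindol \cite{bm}. Your route --- exhibit the K3 itself as a projective extension of the canonical model, then run Zak--L'vovsky backwards through the identification of $\alpha(C)$ with ${\rm cork}\,\Phi_{\omega_C}$ --- is genuinely different from both, and it is precisely the ``connection'' the survey assembles in its Sections 2--3. In particular the step you single out as the main obstacle is not one: your inequality is the equality $\alpha(C)={\rm cork}\,\Phi_{\omega_C}$ of Proposition~\ref{cano}, proved via Lemma~\ref{cork} from the twisted dual Euler and conormal sequences, with Noether's theorem supplying both the linear normality of the canonical curve and the surjectivity of $\mu_{\omega_C,\omega_C}$. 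There is no circularity, since Zak--L'vovsky is proved independently of Wahl. One slip in your sketch: the displayed ``conormal sequence'' is wrong, as $N^{\vee}_{C/\PP^{g-1}}$ has rank $g-3$; the line bundle $\omega_C^{-1}$ is the conormal of $C$ in $S$, not in $\PP^{g-1}$. What your approach buys is a conceptual two-line reduction once the paper's machinery is in place; what it costs is Zak--L'vovsky as a black box plus separate treatment of hyperelliptic and low-genus cases, which Wahl's and Beauville--M\'erindol's arguments handle uniformly.

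The one genuine gap is your case trichotomy ``very ample / hyperelliptic / small $g$'': it is not exhaustive. Very ampleness of $L=\O_S(C)$ also fails whenever $S$ contains a $(-2)$-curve $\Gamma$ with $\Gamma\cdot C=0$, which happens in every genus and has nothing to do with hyperellipticity, and then your smooth extension $S\subset\PP^g$ does not exist. The repair: for $C$ irreducible and non-hyperelliptic, Saint-Donat \cite{sd} gives that $|L|$ is base-point free and birational, so $\varphi_L(S)\subset\PP^g$ is a normal K3 surface with ADE singularities, not a cone (rational double points are not the vertex singularity of a cone over a canonical curve of genus $\ge 2$), having the canonical $C$ as hyperplane section. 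The definition of extendability in the Introduction allows such singular $Y$, and the full Theorem~\ref{zak} of \cite{z,lv,ba2,bs} applies --- but note that the survey's simplified proof of Theorem~\ref{zak} only treats chains of smooth extensions, so you must invoke the full version. Two smaller points: your ``direct computation'' for hyperelliptic curves is Theorem~\ref{altre}(i) (corank $3g-2$, \cite{cm}); and your closing ``self-contained variant'' is incomplete as stated, since nonvanishing of the obstructing $H^1$ does not by itself preclude surjectivity of the $H^0$-level restriction --- one must show the relevant connecting homomorphism is nonzero, and that is exactly the content of \cite{bm}, not a formal consequence of Serre duality on $S$.
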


A beautiful geometric proof of this theorem was also given by Beauville and M\'erindol \cite{bm}.

The connection between Zak-L'vovsky's and Wahl's theorem will be made in the next sections.

\vskip .4cm

In this survey we will focus on the extendability of projective varieties. The question was investigated already by the Italian and British schools, it went on with the school of adjunction theory and received a big push after the theorems of Zak-L'vovsky and Wahl. It is a very active area of research still going strong today, where Ciro Ciliberto stands as one of the main contributors. 

We apologize for not treating, due to lack of space, several important aspects very much related to extendability, such as projective duality, deformation theory and graded pieces of the $T^1$ sheaf.

\section{Extendability in general}

In order to understand Zak-L'vovsky's theorem, we start with some simple but useful observations. Even though we do not know a precise reference, they have been all well-known for a long time. 

\begin{prop}
\label{inf}
Let $X \subset \PP^r = \PP V$ be a smooth irreducible nondegenerate variety of dimension $n \ge 1$ and of codimension at least $1$. Then:
\begin{itemize}
\item [(i)] $h^0(T_{{\PP^r}_{|X}}(-1)) \ge r+1$ and equality holds if $n \ge 2$;
\item [(ii)] $N_X(-1)$ is globally generated;
\item [(iii)] $\alpha(X) \ge 0$;
\item [(iv)] If $h^1(\O_X)=0$ and either $n \ge 3$ or $n=2$ and the multiplication map $V \otimes H^0(\omega_X) \to H^0(\omega_X(1))$ is surjective, then $\alpha(X) = h^1(T_X(-1))$.
\end{itemize} 
\end{prop}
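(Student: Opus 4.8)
The starting point is the Euler sequence restricted to $X$ and the normal bundle sequence, which together control all the cohomology in sight. Twisting the Euler sequence $0 \to \O_X \to V^\vee \otimes \O_X(1) \to T_{\PP^r|X} \to 0$ by $\O_X(-1)$ gives
$$0 \to \O_X(-1) \to V^\vee \otimes \O_X \to T_{\PP^r|X}(-1) \to 0.$$
Taking cohomology, since $X$ is nondegenerate we have $H^0(\O_X(-1))=0$ and the map $V^\vee \to H^0(T_{\PP^r|X}(-1))$ is injective, giving $h^0(T_{\PP^r|X}(-1)) \ge r+1$. For the equality in (i) when $n \ge 2$, I would argue that the cokernel of $V^\vee \to H^0(T_{\PP^r|X}(-1))$ injects into $H^1(\O_X(-1))$, and that this group vanishes: for $n \ge 2$ this is a standard consequence of the Kodaira-type vanishing or, more elementarily, follows from the fact that a smooth nondegenerate variety of dimension $\ge 2$ is projectively normal enough in degree $\le 0$, so $H^1(\O_X(-1))=0$. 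Hence the map is an isomorphism and equality holds.

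For part (ii), I would twist the normal bundle sequence $0 \to T_X \to T_{\PP^r|X} \to N_X \to 0$ by $\O_X(-1)$ to get
$$0 \to T_X(-1) \to T_{\PP^r|X}(-1) \to N_X(-1) \to 0.$$
Global generation of $N_X(-1)$ then follows from global generation of $T_{\PP^r|X}(-1)$, which is itself immediate from the twisted Euler sequence above: since $V^\vee \otimes \O_X \twoheadrightarrow T_{\PP^r|X}(-1)$ is surjective as a map of sheaves, $T_{\PP^r|X}(-1)$ is globally generated, and a quotient of a globally generated sheaf is globally generated. Part (iii) is then a counting consequence: taking global sections in this last sequence, $h^0(N_X(-1)) \ge h^0(T_{\PP^r|X}(-1)) - h^0(T_X(-1))$, and I would combine this with (i) after checking that $h^0(T_X(-1))$ does not interfere—more directly, $\alpha(X) = h^0(N_X(-1)) - (r+1) \ge 0$ follows because the global sections of $N_X(-1)$ contain the image of $H^0(T_{\PP^r|X}(-1))$, whose dimension is at least $r+1$, modulo the sections coming from $T_X(-1)$; the cleanest route is to use the long exact sequence and the fact that the connecting maps land in the appropriate $H^1$.

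For part (iv), the goal is the identification $\alpha(X) = h^1(T_X(-1))$. I would take the long exact cohomology sequence of the twisted normal bundle sequence and chase the terms: the relevant segment reads
$$0 \to H^0(T_X(-1)) \to H^0(T_{\PP^r|X}(-1)) \to H^0(N_X(-1)) \to H^1(T_X(-1)) \to H^1(T_{\PP^r|X}(-1)).$$
Using (i) to replace $h^0(T_{\PP^r|X}(-1))$ by $r+1$, the definition $\alpha(X) = h^0(N_X(-1)) - (r+1)$ becomes, after exactness, $\alpha(X) = h^1(T_X(-1)) - (\text{image in } H^1(T_{\PP^r|X}(-1))) + h^0(T_X(-1))$. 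The hypotheses are exactly what kill the two unwanted terms. The hard part, and the main obstacle, is showing $H^1(T_{\PP^r|X}(-1))=0$ and $H^0(T_X(-1))=0$ under the stated hypotheses: I expect $H^0(T_X(-1))=0$ to follow from a vanishing forced by nondegeneracy (a nonzero section would give an unexpected vector field lowering degree), while $H^1(T_{\PP^r|X}(-1))=0$ is the delicate one. For the latter I would again use the twisted Euler sequence, reducing it to the vanishing of $H^1(V^\vee \otimes \O_X)$ and $H^2(\O_X(-1))$; here Serre duality converts $H^2(\O_X(-1))$ (when $n=2$) into a group controlled precisely by the surjectivity of the multiplication map $V \otimes H^0(\omega_X) \to H^0(\omega_X(1))$ in the hypothesis, whereas for $n \ge 3$ the relevant vanishing is automatic by the same projective-normality/Kodaira input together with $h^1(\O_X)=0$. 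Assembling these vanishings into the exact sequence yields the clean equality $\alpha(X) = h^1(T_X(-1))$.
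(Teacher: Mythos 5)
The genuine gap is in (iii) and (iv): both hinge on the vanishing $H^0(T_X(-1))=0$, which you defer with ``after checking that $h^0(T_X(-1))$ does not interfere'' and later with ``I expect $H^0(T_X(-1))=0$ to follow from a vanishing forced by nondegeneracy.'' This vanishing is not a routine consequence of nondegeneracy: it is the theorem of Mori--Sumihiro and Wahl (\cite[Thm.~8]{ms}, \cite[Thm.~1]{w2}) that a smooth projective variety carrying an ample line bundle $L$ with $H^0(T_X \otimes L^{-1}) \neq 0$ must be $(\PP^n, \O_{\PP^n}(1))$ or the plane conic $(\PP^1, \O_{\PP^1}(2))$. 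Your heuristic ``a nonzero section would give an unexpected vector field lowering degree'' fails precisely on these examples, where such sections do exist, so no soft degree argument can close this step --- one must invoke (or reprove) the classification. The exception is moreover real: for $n=1$ a plane conic does have $H^0(T_X(-1)) \neq 0$, and the paper disposes of it separately by computing $\alpha(X)=0$ directly (there $N_X(-1) \cong \O_{\PP^1}(2)$, so $h^0 = 3 = r+1$); your proposal never treats this case. As written, your section count in (iii) yields only $\alpha(X) \ge -\,h^0(T_X(-1))$, which is vacuous without the vanishing, and in (iv) the long exact sequence gives $\alpha(X) = h^1(T_X(-1)) - h^0(T_X(-1))$, so the desired equality again requires the Mori--Sumihiro/Wahl input (harmless there, since $n \ge 2$ excludes the conic, but it must be cited).

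Two smaller inaccuracies. First, your ``more elementary'' justification of $H^1(\O_X(-1))=0$ for $n \ge 2$ via being ``projectively normal enough in degree $\le 0$'' is not a valid argument --- projective normality controls $H^1(\I_{X}(k))$, not $H^1(\O_X(-1))$; the correct tool is Kodaira vanishing, which you also invoke and which is what the paper uses. Second, in (iv) for $n=2$ you should not aim at the \emph{vanishing} of $H^2(\O_X(-1))$: by Serre duality this group is dual to $H^0(\omega_X(1))$ and is typically nonzero for a surface. What is needed --- and what surjectivity of $V \otimes H^0(\omega_X) \to H^0(\omega_X(1))$ provides by duality --- is \emph{injectivity} of the map $H^2(\O_X(-1)) \to V^{\vee} \otimes H^2(\O_X)$, which together with $h^1(\O_X)=0$ forces $H^1(T_{{\PP^r}_{|X}}(-1))=0$ in the twisted Euler sequence; your following sentence suggests you see this mechanism, but the statement as formulated is wrong for surfaces. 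Apart from these points, your cohomological skeleton (twisted Euler sequence for (i)--(ii), twisted normal bundle sequence for (iii)--(iv)) coincides with the paper's proof.
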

\begin{proof}
From the twisted Euler sequence
\begin{equation}
\label{twe}
0 \to \O_X(-1) \to V^{\vee} \otimes \O_X \to T_{{\PP^r}_{|X}}(-1) \to 0
\end{equation}
we get that $T_{{\PP^r}_{|X}}(-1)$ is globally generated, $h^0(T_{{\PP^r}_{|X}}(-1)) \ge r+1$ and that equality holds if $n \ge 2$ by Kodaira vanishing. Hence we have (i). Now the twisted normal bundle sequence 
\begin{equation}
\label{norm2}
0 \to T_X(-1) \to T_{{\PP^r}_{|X}}(-1) \to N_X(-1) \to 0
\end{equation}
implies (ii). Moreover, since $H^0(T_X(-1)) = 0$ by \cite[Thm.~8]{ms}, \cite[Thm.~1]{w2} (unless $X$ is a plane conic, but then $\alpha(X) = 0$), we get that 
$$\alpha(X) = h^0(N_X(-1))-r-1 \ge h^0(T_{{\PP^r}_{|X}}(-1)) -r-1 \ge 0$$
that is (iii). Now assume that $h^1(\O_X)=0$. If $n \ge 3$, then $H^2(\O_X(-1))=0$ by Kodaira vanishing and we get that $H^1(T_{{\PP^r}_{|X}}(-1))=0$ by \eqref{twe}. When $n=2$ the same is achieved by observing that the map $H^2(\O_X(-1)) \to V^{\vee} \otimes H^2(\O_X)$ is injective. Then \eqref{norm2} gives (iv). 
\end{proof}

Let us take a look at the behaviour of $\alpha(X)$ under hyperplane sections.

\begin{prop}
\label{bd}
Let $X \subset \PP^r$ be a smooth irreducible nondegenerate variety of dimension $n \ge 1$ and of codimension at least $1$. Let $Y \subset \PP^{r+1}$ be a smooth extension of $X$. If $H^0(N_X(-2))=0$, then $H^0(N_Y(-2))=0$ and $\alpha(Y) \le \alpha(X)-1$.
\end{prop}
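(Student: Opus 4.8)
The plan is to transfer information between the normal bundles of $Y$ and $X$ and then run a short cohomology chase. The crucial geometric input is that $X = Y \cap \PP^r$ is a \emph{smooth} hyperplane section of the smooth variety $Y$, so the intersection is transverse; the standard normal bundle identity for a transverse intersection then yields an isomorphism $N_Y|_X \cong N_X$ (here $N_Y = N_{Y/\PP^{r+1}}$). Since $X$ is cut out on $Y$ by the hyperplane, we have $X \in |\O_Y(1)|$, and restricting the locally free sheaf $N_Y(-k)$ to the divisor $X$ produces, for every integer $k$, the exact sequence
\begin{equation*}
0 \to N_Y(-k-1) \to N_Y(-k) \to N_X(-k) \to 0 \tag{$\ast$}
\end{equation*}
after identifying $N_Y(-k)|_X \cong N_X(-k)$.

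The next step is to upgrade the hypothesis $H^0(N_X(-2)) = 0$ to the vanishing $H^0(N_X(-k)) = 0$ for \emph{all} $k \ge 2$. This holds because $X$ is integral and $\O_X(1)$ is effective: multiplication by a nonzero $t \in H^0(\O_X(1))$ is injective on the torsion-free sheaf $N_X(-k-1)$, inducing injections $H^0(N_X(-k-1)) \hookrightarrow H^0(N_X(-k))$, so the vanishing at $k=2$ forces $H^0(N_X(-k)) = 0$ for every $k \ge 2$. Feeding this into the long exact sequence of $(\ast)$ for $k \ge 2$ gives isomorphisms $H^0(N_Y(-k-1)) \cong H^0(N_Y(-k))$; since a vector bundle has no global sections after a sufficiently negative twist, $H^0(N_Y(-k)) = 0$ for $k \gg 0$, and the whole chain collapses, giving in particular $H^0(N_Y(-2)) = 0$. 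This is the first assertion.

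Finally, for the bound on $\alpha$ I would take $(\ast)$ with $k=1$, whose associated sequence reads $0 \to H^0(N_Y(-2)) \to H^0(N_Y(-1)) \to H^0(N_X(-1))$. As $H^0(N_Y(-2)) = 0$, the restriction $H^0(N_Y(-1)) \hookrightarrow H^0(N_X(-1))$ is injective, so $h^0(N_Y(-1)) \le h^0(N_X(-1))$. Since $Y \subset \PP^{r+1}$ one has $\alpha(Y) = h^0(N_Y(-1)) - (r+2)$ while $\alpha(X) = h^0(N_X(-1)) - (r+1)$, and subtracting gives $\alpha(Y) - \alpha(X) = h^0(N_Y(-1)) - h^0(N_X(-1)) - 1 \le -1$, as required.

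The heart of the argument is the transversality identification $N_Y|_X \cong N_X$ together with the correct bookkeeping of twists in $(\ast)$; everything else is formal. The only points demanding a genuine (if small) argument are the propagation of the vanishing of $H^0(N_X(-k))$ from $k=2$ to all $k \ge 2$ and the tacit fact that a vector bundle loses all its sections under a sufficiently negative twist, which together let the recursion on $(\ast)$ terminate at zero.
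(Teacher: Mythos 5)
Your proof is correct and takes essentially the same route as the paper: the same restriction sequence $0 \to N_Y(-i-1) \to N_Y(-i) \to N_X(-i) \to 0$ coming from the identification $N_Y|_X \cong N_X$, the same collapse of the chain $h^0(N_Y(-i-1)) = h^0(N_Y(-i))$ for $i \ge 2$ down to zero, and the same use of the $i=1$ sequence to get $h^0(N_Y(-1)) \le h^0(N_X(-1))$ and hence $\alpha(Y) \le \alpha(X)-1$. Your only additions are to spell out two steps the paper leaves tacit, namely the propagation of $H^0(N_X(-2))=0$ to all twists $-k$ with $k \ge 2$ via multiplication by a section of $\O_X(1)$, and the eventual vanishing $h^0(N_Y(-k))=0$ for $k \gg 0$ that terminates the recursion.
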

\begin{proof}
Since $(N_{Y/\PP^{r+1}})_{|X} \cong N_{X/\PP^r}$, for every $i \ge 1$ we have an exact sequence
\begin{equation}
\label{norm}
0 \to N_Y(-i-1) \to N_Y(-i) \to N_X(-i) \to 0.
\end{equation}
Since $H^0(N_X(-2))=0$ we get from \eqref{norm} that $h^0(N_Y(-i)) = h^0(N_Y(-i-1))$ for $i \ge 2$ and therefore $h^0(N_Y(-2))=0$. Then, again from \eqref{norm} with $i=1$, we see that
$$\alpha(Y) = h^0(N_Y(-1))-r-2 \le h^0(N_X(-1))-r-2 = \alpha(X)-1.$$
\end{proof}

We will now give an idea of how to prove a much-simplified version of Zak-L'vovsky's theorem (Zak's version). We will consider only the case of a chain of smooth extensions. For singular versions see \cite{bf, ba2}. In section \ref{can} we give an example of Zak of a variety that is extendable but not smoothly extendable.

\vskip .3cm

\noindent {\it Proof of Theorem \ref{zak}(simplified version).}
\begin{proof}
Recall that $\alpha(X) \ge 0$ by Proposition \ref{inf}(iii). We first record the following

\begin{claim}
\label{ext1}
If $\alpha(X) = 0$, then $H^0(N_X(-2))=0$.
\end{claim}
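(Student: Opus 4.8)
The plan is to argue by contradiction: I assume there is a nonzero section $s \in H^0(N_X(-2))$ and derive that $X$ must be a quadric, contrary to the standing hypothesis of Theorem \ref{zak}. The engine of the argument is multiplication by linear forms. Writing $V^\vee = H^0(\O_{\PP^r}(1))$ and restricting to $X$, every $\ell \in V^\vee$ produces a section $\ell s \in H^0(N_X(-1))$. Since $N_X(-2)$ is locally free on the integral variety $X$ and $s \neq 0$, the assignment $\ell \mapsto \ell s$ is injective: if $\ell s = 0$ then $\ell$ vanishes on the dense open locus where $s$ is nonzero, hence $\ell = 0$ by nondegeneracy of $X$. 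This exhibits an $(r+1)$-dimensional subspace of $H^0(N_X(-1))$. But $\alpha(X) = 0$ means exactly $h^0(N_X(-1)) = r+1$, so the sections $\{\ell s : \ell \in V^\vee\}$ must in fact span all of $H^0(N_X(-1))$.

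Next I would feed this into the global generation of $N_X(-1)$ supplied by Proposition \ref{inf}(ii). At any point $p \in X$, the fiber of $N_X(-1)$ is spanned by the values of global sections; by the previous step these values all have the form $\ell(p)\,s(p)$, and so lie on the single line through $s(p)$. Hence the fiber of $N_X(-1)$, which has dimension $\rk N_X = \codim X$, is at most one-dimensional, and is nonzero only when $s(p) \neq 0$. Since global generation forces the fiber to have full dimension $\rk N_X \geq 1$ at every point, this pins down $\codim X = 1$ and shows that $s$ vanishes nowhere on $X$.

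Finally I would dispatch the hypersurface case. If $X \subset \PP^{n+1}$ has degree $d$, then $N_X(-2) \cong \O_X(d-2)$, and a nowhere-vanishing section forces $\O_X(d-2) \cong \O_X$. For $d \geq 3$ the line bundle $\O_X(d-2)$ is ample on the positive-dimensional variety $X$ and therefore admits no nowhere-vanishing section; the only remaining possibility $d = 2$ is precisely the excluded quadric. Either way we obtain a contradiction, which proves $H^0(N_X(-2)) = 0$.

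The argument is short once the multiplication mechanism is in place, so I do not anticipate a serious computational obstacle. The one step I would verify most carefully is the passage from ``all spanning sections are proportional to $s(p)$'' to the codimension bound via global generation, since this is where the rank of $N_X(-1)$ really gets used; and it is worth emphasizing that the quadric is the unique configuration surviving this analysis, which is exactly the point at which the hypothesis that $X$ is not a quadric is indispensable (for a quadric one has $\alpha(X) = 0$ yet $H^0(N_X(-2)) = H^0(\O_X) \neq 0$).
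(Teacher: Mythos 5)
Your proof is correct, and its engine is exactly the paper's: multiply a putative nonzero $s \in H^0(N_X(-2))$ by the $r+1$ independent restrictions of linear forms to get a basis of the $(r+1)$-dimensional space $H^0(N_X(-1))$ (here $\alpha(X)=0$ is used), then play this against the global generation of $N_X(-1)$ from Proposition \ref{inf}(ii) to conclude that every fiber of $N_X(-1)$ is spanned by multiples of the value of $s$. Where you genuinely diverge is in the order of play and in the disposal of the hypersurface case. The paper first rules out $\codim X = 1$ without ever using $s$: for a hypersurface of degree $d \ge 3$ one has $N_X(-1) \cong \O_X(d-1)$, whence $\alpha(X) \ge h^0(\O_X(2)) - r - 1 > 0$, contradicting $\alpha(X)=0$; with $\codim X \ge 2$ in hand, the contradiction is then immediate, since the values of the sections $\sigma \otimes \tau_j$ generate a rank-one submodule of a fiber of rank at least $2$. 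You instead let the global-generation argument do double duty: it forces $\codim X = 1$ together with the nowhere-vanishing of $s$, and you then kill the hypersurface case by noting that $s$ trivializes $\O_X(d-2)$, which is ample for $d \ge 3$, with $d=2$ excluded as the quadric (you should also say a word about $d=1$, excluded by nondegeneracy). Each arrangement buys something: the paper's computation proves the stronger fact that no hypersurface of degree $\ge 3$ has $\alpha = 0$ at all (consistent with Remark \ref{nonmig}), independently of any section of $N_X(-2)$; your version is more uniform, extracting both the codimension bound and the final contradiction from the single mechanism of the distinguished section, and it makes transparent that the quadric --- where $\alpha(X)=0$ yet $H^0(N_X(-2)) = H^0(\O_X) \neq 0$ --- is the unique configuration surviving the analysis.
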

\begin{proof}
If $X$ has codimension $1$ and degree $d$, then $d \ge 3$ and $N_X(-1) \cong \O_X(d-1)$. But this gives
$$\alpha(X) \ge h^0(\O_X(2))-r-1>0$$
a contradiction. Therefore $\codim X \ge 2$. This implies that, at every point $x \in X$, the $\O_{X,x}$-module $N_X(-1)_x$ has rank at least $2$.

Assume that $H^0(N_X(-2))\ne 0$ and let $\sigma \in  H^0(N_X(-2))$ be non-zero. Let $\{\tau_0, \ldots, \tau_r\}$ be a basis of $\Im \{H^0(\O_{\PP^r}(1)) \to H^0(\O_X(1))\}$. Then $\sigma \otimes \tau_0, \ldots, \sigma \otimes \tau_r \in H^0(N_X(-1))$ are linearly independent, thus they are a basis. On the other hand, $N_X(-1)$ is globally generated by Proposition \ref{inf}(ii), hence the sections $\sigma \otimes \tau_j, 0 \le j \le r$ generate $N_X(-1)_x$. But the $\sigma_x \otimes (\tau_j)_x$ generate an $\O_{X,x}$-module of rank $1$, a contradiction.
\end{proof}

Therefore, if $\alpha(X) = 0$, the theorem follows by Claim \ref{ext1} and Propositions \ref{bd} and  \ref{inf}(iii). 

Next, let $k \ge 2$. Suppose that $H^0(N_X(-2))=0$ and that $X$ is $k$-extendable. We show, by induction on $k$, that $\alpha(X) \ge k$. This will of course complete the proof of the theorem.

Let $Y \subset \PP^{r+1}$ be a smooth extension of $X$. By Proposition \ref{bd} we have that 
$H^0(N_Y(-2))=0$ and $\alpha(X) \ge \alpha(Y)+1$. If $k=2$ we have that $Y$ is extendable, hence $ \alpha(Y) \ge 1$ by the first part of the theorem. Therefore $\alpha(X) \ge 2$. If $k \ge 3$, since $Y$ is $(k-1)$-extendable, we have by induction that $ \alpha(Y) \ge k-1$, hence $\alpha(X) \ge k$.
\end{proof}

\begin{remark} 
\label{nonmig}
It is easy to see that the conditions (i) and (ii) in Zak-L'vovsky's theorem are essential. For example one can take a hypersurface $X \subset \PP^{n+1}$ of degree $d \ge 3$. Then $\alpha(X) > n+1$ and $H^0(N_X(-2)) \ne 0$. On the other, hand $X$ is infinitely extendable. See Remark \ref{cansha} for an example of canonical curves.
\end{remark}

One immediate consequence of Zak-L'vovsky's theorem is the one mentioned below. As a matter of fact, this appeared before Theorem \ref{zak}, at least for smooth or normal extensions, and it actually has stronger consequences \cite{fuj, so, ba1, bs}, but we will not be concerned with them here.

\begin{prop} 
\label{tx}
Let $X \subset \PP^r$ be a smooth irreducible nondegenerate variety of dimension $n \ge 2$, of codimension at least $1$ and suppose that $X$ is not a quadric. If $H^1(T_X(-1)) = 0$, then $X$ is not extendable. 
\end{prop}
\begin{proof}
By \eqref{norm2} and \cite[Thm.~8]{ms}, \cite[Thm.~1]{w2} we see that $h^0(N_X(-1)) = h^0(T_{{\PP^r}_{|X}}(-1))$. Then $\alpha(X)=0$ by Proposition \ref{inf}(i), hence $X$ is not extendable by Theorem \ref{zak}.
\end{proof}

\section{How to estimate $\alpha(X)$: a fortunate coincidence}

Let $X \subset \PP^r$ be a smooth irreducible nondegenerate variety of dimension $n \ge 1$ and of codimension at least $1$.

How can we compute $\alpha(X)$? 

Unless one has a very good knowledge of the normal bundle $N_X$ (or of $T_X$ in the case of Proposition \ref{inf}(iv)), it turns out that it is quite difficult to compute $\alpha(X)$. 

However, in dimension $1$, a fortunate coincidence, explained below, comes to help. 

All the results that follow are of course not new (see for example \cite{cm2, w5}), but we include them for the benefit of the reader.  

\begin{lemma}
\label{cork}
Let $C \subset \PP^r$ be a smooth irreducible nondegenerate linearly normal curve of positive genus. Let
$\mu_{\O_C(1), \omega_C} : H^0(\O_C(1)) \otimes H^0(\omega_C) \to H^0(\omega_C(1))$
be the multiplication map on sections. 

Consider the map, called the Gaussian map,
$$\Phi_{\O_C(1), \omega_C}: H^0(\Omega_{{\PP^r}_{|C}} \otimes \omega_C(1)) \to H^0(\omega_{C}^2(1)).$$
Then:
\begin{itemize}
\item[(i)] $H^0(\Omega_{{\PP^r}_{|C}} \otimes \omega_C(1)) \cong \Ker \mu_{\O_C(1), \omega_C}$;
\item[(ii)] Given the identification in (i), we have that $\Phi_{\O_C(1), \omega_C}(s \otimes t) = sdt-tds$;
\item[(iii)] $\alpha(C) = {\rm cork} \Phi_{\O_C(1), \omega_C}$.
\end{itemize}
\end{lemma}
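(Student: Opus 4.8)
I would prove the three assertions in order: a cohomological identification, a local computation, and a Serre duality count.

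For (i) I would restrict to $C$ the twisted Euler sequence on $\PP^r$ written with the evaluation map. Using linear normality to identify $H^0(\O_C(1)) = H^0(\O_{\PP^r}(1))$, this reads
\begin{equation*}
0 \to \Omega_{{\PP^r}_{|C}}(1) \to H^0(\O_C(1)) \otimes \O_C \xrightarrow{\mathrm{ev}} \O_C(1) \to 0.
\end{equation*}
Tensoring by $\omega_C$ and passing to global sections, left-exactness of $H^0$ identifies $H^0(\Omega_{{\PP^r}_{|C}} \otimes \omega_C(1))$ with the kernel of $H^0(\O_C(1)) \otimes H^0(\omega_C) \to H^0(\omega_C(1))$, which is precisely $\mu_{\O_C(1), \omega_C}$. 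This gives (i), with no vanishing needed.

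Part (ii) records what $\Phi_{\O_C(1),\omega_C}$ becomes under this identification. Intrinsically it is $H^0$ of the map $\Omega_{{\PP^r}_{|C}} \otimes \omega_C(1) \to \omega_C^2(1)$ induced by the conormal surjection $\Omega_{{\PP^r}_{|C}} \to \omega_C$. I would trace a class $\sum_i s_i \otimes t_i \in \Ker \mu_{\O_C(1),\omega_C}$ through the Euler identification and compute in a local coordinate $z$, trivializing $\O_C(1)$ and writing $t_i = g_i\,dz$; the conormal quotient is the fibrewise differential, so it sends the lift of $\sum_i s_i \otimes t_i$ to $\sum_i t_i\,ds_i$. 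The defining relation $\sum_i s_i t_i = 0$ gives $\sum_i (t_i\,ds_i + s_i\,dt_i) = 0$, so this equals $\sum_i(s_i\,dt_i - t_i\,ds_i)$ up to the harmless factor coming from antisymmetrisation. This local matching of the abstract conormal map with the explicit Gaussian expression, together with the bookkeeping of its normalising constant, is the main technical obstacle, although the constant is irrelevant to (iii).

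For (iii) I would twist the conormal sequence by $\omega_C(1)$,
\begin{equation*}
0 \to N_C^{\vee} \otimes \omega_C(1) \to \Omega_{{\PP^r}_{|C}} \otimes \omega_C(1) \xrightarrow{\pi} \omega_C^2(1) \to 0,
\end{equation*}
so that $\Phi_{\O_C(1),\omega_C} = H^0(\pi)$ by (ii). As $\deg \omega_C^2(1) > 2g-2$ we have $H^1(\omega_C^2(1)) = 0$, and the long exact sequence gives ${\rm cork}\,\Phi_{\O_C(1),\omega_C} = h^1(N_C^{\vee} \otimes \omega_C(1)) - h^1(\Omega_{{\PP^r}_{|C}} \otimes \omega_C(1))$. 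Serre duality on $C$ rewrites these as $h^0(N_C(-1))$ and $h^0(T_{{\PP^r}_{|C}}(-1))$, whence ${\rm cork}\,\Phi_{\O_C(1),\omega_C} = h^0(N_C(-1)) - h^0(T_{{\PP^r}_{|C}}(-1))$. Comparing with $\alpha(C) = h^0(N_C(-1)) - (r+1)$, the proof reduces to the single equality $h^0(T_{{\PP^r}_{|C}}(-1)) = r+1$.

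This last equality is the genuine crux in dimension one, and I expect it to be the real obstacle. Proposition \ref{inf}(i) only yields $h^0(T_{{\PP^r}_{|C}}(-1)) \ge r+1$, with equality available there for $n \ge 2$ via a Kodaira vanishing that is unavailable for curves. Taking cohomology of the Euler sequence \eqref{twe} and dualising shows that the excess $h^0(T_{{\PP^r}_{|C}}(-1)) - (r+1)$ equals the corank of $\mu_{\O_C(1), \omega_C}$; hence the clean identity $\alpha(C) = {\rm cork}\,\Phi_{\O_C(1),\omega_C}$ rests, at bottom, on the surjectivity of this multiplication map, which I would establish or invoke as the closing step.
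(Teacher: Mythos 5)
Your proposal is correct and follows essentially the same route as the paper: the twisted dual Euler sequence gives (i), the conormal sequence twisted by $\omega_C(1)$ together with Serre duality gives (iii), and the whole computation hinges, exactly as you diagnose, on the surjectivity of $\mu_{\O_C(1), \omega_C}$, which the paper simply invokes from \cite{c} and \cite[Thm.~1.6]{as}. The only cosmetic difference is that you express the crux as $h^0(T_{{\PP^r}_{|C}}(-1))=r+1$ via Serre duality, while the paper computes the dual quantity $h^1(\Omega_{{\PP^r}_{|C}} \otimes \omega_C(1))=r+1$ directly from the Euler cohomology sequence.
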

\begin{proof}

The twisted dual Euler sequence
$$0 \to \Omega_{{\PP^r}_{|C}} \otimes \omega_C(1) \to H^0(\O_C(1)) \otimes \omega_C \to \omega_C(1) \to 0$$
gives rise to the following exact cohomology sequence 
\begin{equation}
\label{eule}
0 \to H^0(\Omega_{{\PP^r}_{|C}} \otimes \omega_C(1)) \to 
H^0(\O_C(1)) \otimes H^0(\omega_C) \mapright{\mu_{\O_C(1), \omega_C}} H^0(\omega_C(1)) \to
\end{equation}
$$\hskip 7cm \to H^1(\Omega_{{\PP^r}_{|C}} \otimes \omega_C(1)) \to  H^0(\O_C(1)) \otimes H^1(\omega_C) \to 0.$$
This gives (i). Moreover, as $\mu_{\O_C(1), \omega_C}$ is surjective by \cite{c}, \cite[Thm.~1.6]{as}, we have from \eqref{eule} that
\begin{equation}
\label{h1}
h^1(\Omega_{{\PP^r}_{|C}} \otimes \omega_C(1)) = r+1.
\end{equation}
Now the twisted dual normal bundle sequence
$$0 \to N^{\vee}_{C} \otimes\omega_C(1) \to \Omega_{{\PP^r}_{|C}} \otimes \omega_C(1) \to \omega_{C}^2(1) \to 0$$
gives rise to the following exact cohomology sequence 
\begin{equation}
\label{cono}
H^0(\Omega_{{\PP^r}_{|C}} \otimes \omega_C(1)) \mapright{\Phi_{\O_C(1), \omega_C}}
H^0(\omega_{C}^2(1)) \to H^1(N^{\vee}_{C} \otimes \omega_C(1)) \to
H^1(\Omega_{{\PP^r}_{|C}} \otimes \omega_C(1)) \to 0.
\end{equation}
Thus we get (ii). Also, from \eqref{cono}, \eqref{h1} and Serre duality, we get
$$h^0(N_{C}(-1)) = r + 1 + {\rm cork} \Phi_{\O_C(1), \omega_C}$$ 
that is (iii).
\end{proof}
\begin{remark}
\label{mu1}
Both the Gaussian map $\Phi_{\O_C(1), \omega_C}$ and the Wahl map $\Phi_{\omega_C}$ have an ancestor in the map $\mu_1: \Ker  \mu_{W, \omega_C(-L)} \to H^0(\omega_C^2)$, where $W \subseteq H^0(L)$. This was introduced and studied in 1981 by Arbarello and Cornalba in \cite[\S 4]{ac} (see also \cite[Bib.~notes to Chapt.~XXI]{acg}, \cite[\S 4]{as}).
\end{remark}

By the above lemma, now the connection between Zak-L'vovsky's theorem and Wahl's theorem is clear from the following

\begin{prop}
\label{cano}
Let $C \subset \PP^{g-1}$ be a canonically embedded smooth curve of genus $g \ge 3$. Then
$$\alpha(C) = {\rm cork} \Phi_{\omega_C}.$$
Moreover, if $S \subset \PP^g$ is a smooth surface having $C$ as a hyperplane section, then $S$ is a K3 surface.
\end{prop}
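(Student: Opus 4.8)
The plan is to prove the two assertions separately, both exploiting the special numerical features of the canonical embedding $C \subset \PP^{g-1}$, namely $\O_C(1) \cong \omega_C$.

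For the first assertion, I would appeal directly to Lemma~\ref{cork}(iii), which tells us that $\alpha(C) = {\rm cork}\,\Phi_{\O_C(1),\omega_C}$. In the canonical case $\O_C(1)=\omega_C$, so the Gaussian map $\Phi_{\O_C(1),\omega_C}\colon H^0(\Omega_{\PP^{g-1}|C}\otimes\omega_C(1)) \to H^0(\omega_C^2(1))$ becomes a map into $H^0(\omega_C^3)$, and by Lemma~\ref{cork}(i) its source is identified with $\Ker\mu_{\omega_C,\omega_C}$, the kernel of the multiplication map $H^0(\omega_C)\otimes H^0(\omega_C)\to H^0(\omega_C^2)$. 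The key point is that under the formula of Lemma~\ref{cork}(ii), $\Phi_{\O_C(1),\omega_C}(s\otimes t)=s\,dt - t\,ds$, which is precisely (twice) the defining expression of the Wahl map $\Phi_{\omega_C}(f\,dz\wedge g\,dz)=(fg'-gf')\,dz^3$ once one passes from the symmetric to the antisymmetric part. So the step I expect to require the most care is the bookkeeping identifying $\Ker\mu_{\omega_C,\omega_C}$ with $\Lambda^2 H^0(\omega_C)$: the symmetric tensors lie in the image complement and die under the antisymmetric Gaussian operation, so the Gaussian map on $\Ker\mu$ and the Wahl map on $\Lambda^2 H^0(\omega_C)$ have the same corank. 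Granting this, $\alpha(C)={\rm cork}\,\Phi_{\omega_C}$ follows.

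For the second assertion, suppose $S\subset\PP^g$ is a smooth surface with hyperplane section $C$. The plan is to compute the canonical bundle of $S$ via adjunction and show it is trivial, together with checking $h^1(\O_S)=0$. Writing $H=\O_S(1)$, the adjunction formula on $S$ gives $\omega_C \cong (\omega_S\otimes\O_S(C))_{|C} = (\omega_S\otimes H)_{|C}$, while $\omega_C\cong\O_C(1)=H_{|C}$ by hypothesis that $C$ is canonically embedded. Comparing, one finds $(\omega_S)_{|C}\cong\O_C$, so $\omega_S\otimes\O_S(-C)$ restricts trivially to $C$. The main obstacle here is to upgrade this restriction statement to the conclusion $\omega_S\cong\O_S$ on all of $S$: I would use the exact sequence $0\to\omega_S(-H)\to\omega_S\to(\omega_S)_{|C}\to 0$ together with vanishing (for instance Kodaira vanishing $H^i(\omega_S\otimes H)=H^i(\omega_S(-H))^\vee$-type arguments) to show that $\omega_S$ has a nowhere-vanishing section, hence is trivial.

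Finally I would confirm the regularity condition $h^1(\O_S)=0$. From the restriction sequence $0\to\O_S(-H)\to\O_S\to\O_C\to 0$ and the identification $H^1(\O_S(-H))\cong H^1(\omega_S\otimes H)^\vee$, Kodaira vanishing forces $H^1(\O_S(-H))=0$, so $H^1(\O_S)$ injects into $H^1(\O_C)$; a parallel argument (or the long exact sequence combined with the canonical hypothesis on $C$) pins down $h^1(\O_S)=0$. With $\omega_S\cong\O_S$ and $h^1(\O_S)=0$, the surface $S$ is by definition a K3 surface, which completes the proof.
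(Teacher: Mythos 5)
Your first half is exactly the paper's argument: Lemma~\ref{cork}(iii) with $\O_C(1)=\omega_C$, plus the observation that the formula $s\,dt-t\,ds$ of Lemma~\ref{cork}(ii) kills symmetric tensors, so that $\Phi_{\omega_C,\omega_C}$ and $\Phi_{\omega_C}$ have the same corank. That part is fine.

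The second half has a genuine gap: your vanishing statements do not prove what you need, and the key input you are missing is M.~Noether's theorem. Concretely, Kodaira vanishing gives $H^1(\O_S(-1))\cong H^1(\omega_S(1))^\vee=0$, and from $0\to\O_S(-1)\to\O_S\to\O_C\to 0$ this only yields an injection $H^1(\O_S)\hookrightarrow H^1(\O_C)$; but $h^1(\O_C)=g\ge 3$, so no conclusion about $q(S)$ follows, and there is no ``parallel argument'' of Kodaira type that pins down $h^1(\O_S)=0$. The same problem blocks your triviality argument for $\omega_S$: to lift the constant section of $(\omega_S)_{|C}\cong\O_C$ through $0\to\omega_S(-1)\to\omega_S\to\O_C\to 0$ you need $H^1(\omega_S(-1))=0$, which by Serre duality is $H^1(\O_S(1))^\vee$ --- and $H^1(\O_S(1))=H^1(K_S-H)^\vee$ is not controlled by Kodaira vanishing, since at this stage you know nothing about the positivity of $K_S$ (it is precisely what you are trying to determine; for instance $h^1(\O_S(1))\ne 0$ happens on suitably polarized irregular surfaces). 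What the paper does instead is invoke projective normality of the canonical curve (Noether's theorem): the surjections $H^0(\O_{\PP^g}(l))\twoheadrightarrow H^0(\O_{\PP^{g-1}}(l))\twoheadrightarrow H^0(\O_C(l))$ force $H^0(\O_S(l))\to H^0(\O_C(l))$ to be surjective for all $l\ge 0$, whence $h^1(\O_S(l-1))\le h^1(\O_S(l))$ from the restriction sequences, and Serre vanishing for $l\gg 0$ then gives $h^1(\O_S(l))=0$ for \emph{all} $l\ge 0$. This simultaneously delivers $q(S)=0$ and $h^1(\O_S(1))=0$; the latter, via $0\to\O_S\to\O_S(1)\to\omega_C\to 0$, gives $h^2(\O_S)\ge h^1(\omega_C)=1$, so $K_S$ is effective, and then your (correct) adjunction computation $(\omega_S)_{|C}\cong\O_C$, i.e.\ $K_S\cdot C=0$ with $C$ ample, forces $K_S=0$. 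Without the projective normality input, your outline cannot close either the $q(S)=0$ step or the $\omega_S\cong\O_S$ step.
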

\begin{proof}
First of all, Lemma \ref{cork}(iii) says that
$$\alpha(C) = {\rm cork} \Phi_{\omega_C, \omega_C}.$$
Now $\Phi_{\omega_C, \omega_C}$ vanishes on symmetric tensors by Lemma \ref{cork}(ii), hence
$$\alpha(C) = {\rm cork} \Phi_{\omega_C}.$$
For the second part, recall that $C$ is projectively normal by M. Noether's theorem. Hence the commutative diagram
$$\xymatrix{H^0(\O_{\PP^g}(l)) \ar[d] \ar@{->>}[r] & H^0(\O_{\PP^{g-1}}(l)) \ar@{->>}[d] \\ H^0(\O_S(l)) \ar[r] & H^0(\O_C(l))}$$
implies that the map $H^0(\O_S(l)) \to H^0(\O_C(l))$ is surjective for every $l \ge 0$. From the exact sequence
$$0 \to \O_S(l-1) \to \O_S(l) \to \O_C(l) \to 0$$
we deduce that $h^1(\O_S(l-1) \le h^1(\O_S(l))$ for every $l \ge 0$, and therefore $h^1(\O_S(l))=0$ for every $l \ge 0$.
In particular $q(S)=0$. Also $h^1(\O_S(1))=0$ and the exact sequence
$$0 \to \O_S \to \O_S(1) \to \omega_C \to 0$$
implies that $h^0(\omega_S) = h^2(\O_S) \ge h^1(\omega_C)=1$, hence $K_S \ge 0$. On the other hand 
$$\O_C(1) \cong \omega_C \cong \omega_S(1) \otimes \O_C$$
hence $K_S \cdot C = 0$ and this implies that $K_S=0$. Thus $S$ is a K3 surface.
\end{proof}

We remark that singular extensions of canonical curves can exist and are studied in \cite{e1, e2}.

We can now pair Proposition \ref{bd} and Lemma \ref{cork} to get the following way to estimate $\alpha(X)$.

\begin{prop}
Let $X \subset \PP^r$ be a smooth irreducible nondegenerate variety of dimension $n \ge 1$. Let $C$ be a smooth curve section of $X$ and assume that it is linearly normal and $H^0(N_C(-2))=0$. Then
$$\alpha(X) \le {\rm cork} \Phi_{\O_C(1), \omega_C}-n+1.$$
\end{prop}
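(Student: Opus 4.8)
The plan is to realise $X$ as the top of a chain of successive hyperplane sections terminating in $C$, and then to feed this chain into Proposition \ref{bd} one rung at a time. Concretely, I would cut $X$ with $n-1$ general hyperplanes to produce a flag
$$C = X_1 \subset X_2 \subset \cdots \subset X_{n-1} \subset X_n = X,$$
where $X_{i-1}$ is a general hyperplane section of $X_i$, so that $X_i \subset \PP^{r-n+i}$ is an irreducible variety of dimension $i$ and constant codimension $r-n \ge 1$. By Bertini's theorem the general such $X_i$ is smooth and irreducible, and by the standard fact that a general hyperplane section of a nondegenerate variety of positive dimension is again nondegenerate, each $X_i$ is nondegenerate; being smooth, nondegenerate and of positive dimension it is not a cone. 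Hence $X_i$ is a genuine smooth extension of $X_{i-1}$ in the sense of the definition, and the hypotheses of Proposition \ref{bd} are available at every step.

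Next I would run Proposition \ref{bd} up the flag, starting from the assumption $H^0(N_C(-2)) = 0$. Since $X_2$ is a smooth extension of $X_1 = C$, Proposition \ref{bd} yields $H^0(N_{X_2}(-2)) = 0$ together with $\alpha(X_2) \le \alpha(C) - 1$. The first of these is exactly the input needed to apply Proposition \ref{bd} again to the extension $X_3 \supset X_2$, and so on up the ladder. An immediate induction on $i$ then gives $H^0(N_{X_i}(-2)) = 0$ and
$$\alpha(X_i) \le \alpha(X_{i-1}) - 1 \le \cdots \le \alpha(C) - (i-1)$$
for every $i$, so that in particular $\alpha(X) = \alpha(X_n) \le \alpha(C) - (n-1)$.

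Finally I would identify the curve term with the corank of the Gaussian map. Since $C$ is a smooth irreducible nondegenerate linearly normal curve, Lemma \ref{cork}(iii) gives $\alpha(C) = {\rm cork}\, \Phi_{\O_C(1), \omega_C}$, and substituting this into the chain inequality produces
$$\alpha(X) \le {\rm cork}\, \Phi_{\O_C(1), \omega_C} - n + 1,$$
which is the assertion.

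The analytic content is entirely absorbed by Proposition \ref{bd}: the decisive mechanism is that the vanishing $H^0(N(-2)) = 0$ is inherited by extensions, which is precisely what propagates the hypothesis from $C$ all the way up to $X$. Consequently there is no genuine difficulty in the cohomological estimates; the only points that require care are the verification that each hyperplane section is a legitimate smooth extension (smoothness and irreducibility via Bertini, nondegeneracy of the sections, and the exclusion of cones) and the bookkeeping of ambient dimensions, so that the hypotheses of Proposition \ref{bd} and of Lemma \ref{cork} really do hold at the relevant rungs of the ladder.
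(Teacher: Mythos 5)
Your proposal is correct and is exactly the paper's argument: the paper's proof reads, in its entirety, ``Apply Lemma \ref{cork} and an iteration of Proposition \ref{bd},'' and your chain of general hyperplane sections with the inherited vanishing $H^0(N_{X_i}(-2))=0$ is precisely the iteration intended, concluded as in the paper by the identification $\alpha(C) = {\rm cork}\, \Phi_{\O_C(1),\omega_C}$ from Lemma \ref{cork}(iii). Your added verifications (Bertini, nondegeneracy of the sections, the bookkeeping of codimension) merely make explicit what the paper leaves implicit.
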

\begin{proof}
Apply Lemma \ref{cork} and an iteration of Proposition \ref{bd}.
\end{proof}

The emerging philosophy is that if one can control the corank of the Gaussian maps of the curve section of $X$, then on can give information on the extendability of $X$. 

As far as we know, aside from passing to the curve section, there is only one other general result that allows studying the extendability of surfaces, without passing to the hyperplane section, but considering suitable linear systems on the surface. This will be discussed in section \ref{extsu}. 

However, in \cite{cdgk}, another interesting method for calculating $h^1(T_S(-1))$, when $S$ is an Enriques surface, is employed. This method potentially generalizes to other surfaces. 

Moreover, in \cite{r1, r2}, a very nice method of studying extendability of varieties covered by lines is introduced. Some applications of this method are mentioned in the next section.

\section{Old days}
\label{old}

Already in the beginning of the last century, several mathematicians of the Italian and British schools, such as Castelnuovo, Del Pezzo, Scorza, Terracini, Edge, Semple and Roth, just to mention a few, studied the extendability problem.

For example Scorza \cite{s1, s2, s3} proved that a Veronese variety of dimension $n \ge 2$ (see also \cite{seg} for $n=2$) or a Segre variety, except a quadric surface, is not extendable. Terracini \cite{te} also proved non-extendability of Veronese varieties. Di Fiore and Freni \cite{df} proved that a Grassmannian in its Pl\"ucker embedding, except $\mathbb G(1,3) \subset \PP^5$, is not extendable. 

We can quickly recover these results by applying Proposition \ref{tx}.

\begin{prop}
\label{h1t}
The following varieties $X \subset \PP^r$ of dimension $n \ge 2$ and codimension at least $1$, are not extendable:
\begin{itemize}
\item [(i)] any abelian variety or any finite unramified covering of such;
\item [(ii)] a Veronese variety;
\item [(iii)] a Segre variety, except a quadric surface;
\item [(iv)] a Grassmannian in its Pl\"ucker embedding, except $\mathbb G(1,3) \subset \PP^5$.
\end{itemize}
\end{prop}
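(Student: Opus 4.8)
The plan is to apply Proposition~\ref{tx} in each of the four cases. Every variety listed is smooth, irreducible, nondegenerate, of dimension $n \ge 2$ and codimension $\ge 1$, so the only two things to check are that $X$ is not a quadric and that $H^1(T_X(-1)) = 0$. All four families are homogeneous, so in every case $T_X$ is an explicit bundle whose twisted cohomology can be computed directly; I expect the listed exceptions to be exactly the places where either the non-quadric hypothesis or the vanishing $H^1(T_X(-1)) = 0$ breaks down.

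For (i), if $A$ is an abelian variety then $T_A$ is trivial, so $T_A(-1) \cong \O_A(-1)^{\oplus n}$ and $\omega_A \cong \O_A$. For a finite unramified covering $f : X \to A$ one still has $T_X \cong f^*T_A$ trivial and $\omega_X \cong \O_X$, so it suffices to treat $T_X(-1) \cong \O_X(-1)^{\oplus n}$ uniformly. By Serre duality $H^1(\O_X(-1)) \cong H^{n-1}(\omega_X \otimes \O_X(1))^\vee = H^{n-1}(\O_X(1))^\vee$, and this vanishes by Kodaira vanishing since $\O_X(1)$ is ample and $n-1 \ge 1$; this is where $n \ge 2$ enters. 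Hence $H^1(T_X(-1)) = 0$, and as such an $X$ is never a quadric, Proposition~\ref{tx} applies.

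For (ii) and (iii) I would compute directly. For the Veronese $X = v_d(\PP^n)$ (so $\O_X(1) = \O_{\PP^n}(d)$, $d \ge 2$) I twist the Euler sequence by $\O_{\PP^n}(-d)$ and read $H^1(T_{\PP^n}(-d))$ off Bott's formula; equivalently $H^1(T_{\PP^n}(-d))^\vee \cong H^{n-1}(\Omega^1_{\PP^n}(d-n-1))$, which vanishes for all $d$ when $n \ge 3$ and for $d \ne 3$ when $n = 2$. For the Segre variety $X = \PP^{a_1} \times \cdots \times \PP^{a_k}$ (with $k \ge 2$, each $a_i \ge 1$, $\O_X(1) = \O(1,\dots,1)$) one has $T_X = \bigoplus_i \mathrm{pr}_i^* T_{\PP^{a_i}}$, so by Künneth every summand of $H^1(T_X(-1))$ carries a factor $H^0(\PP^{a_j}, \O_{\PP^{a_j}}(-1)) = 0$ or $H^1(\PP^{a_j}, \O_{\PP^{a_j}}(-1)) = 0$ for some $j$, whence $H^1(T_X(-1)) = 0$. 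The smooth quadric surface $\PP^1 \times \PP^1 \subset \PP^3$ has $\alpha = 0$ yet is extendable, which is exactly why the non-quadric hypothesis cannot be dropped; it is removed precisely by that hypothesis.

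For (iv), on $\mathbb{G} = \mathbb{G}(k,m)$ one has $T_{\mathbb{G}} \cong S^\vee \otimes Q$ and $\O_{\mathbb{G}}(1) = \det S^\vee$, and I would compute $H^1(T_{\mathbb{G}}(-1)) = H^1(S^\vee \otimes Q \otimes \det S)$ by the Bott--Borel--Weil theorem, obtaining $0$; the Plücker quadric $\mathbb{G}(1,3) = Q^4 \subset \PP^5$ is again removed by the non-quadric hypothesis. The genuine obstacle is the cubic Veronese surface $v_3(\PP^2) \subset \PP^9$, where the computation above gives $H^1(T_{\PP^2}(-3)) \cong H^1(\PP^2, \Omega^1_{\PP^2}) \cong \mathbb{C} \ne 0$ (indeed $T_{\PP^2}(-3) \cong \Omega^1_{\PP^2}$), so Proposition~\ref{tx} does not apply verbatim. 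The way out is to bound $\alpha$ rather than $h^1(T_X(-1))$: the cohomology of \eqref{norm2} gives $\alpha(X) = \dim \mathrm{im}\,\delta$, where $\delta : H^0(N_X(-1)) \to H^1(T_X(-1))$ is the connecting map, so it suffices to check that $\delta = 0$ for $v_3(\PP^2)$, equivalently that $h^0(N_X(-1)) = r+1$; then $\alpha(v_3(\PP^2)) = 0$ and Theorem~\ref{zak} still yields non-extendability. Establishing this vanishing of $\delta$ is the step I expect to be the main difficulty.
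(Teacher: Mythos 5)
Your strategy coincides with the paper's: every case except one is settled by Proposition~\ref{tx} via the vanishing $H^1(T_X(-1))=0$, and your computations for abelian varieties and their covers, for Segre products via K\"unneth, and for Grassmannians via Bott are all correct (the paper simply asserts these vanishings, citing \cite[Prop.~2.7]{fuj} for the coverings). You also correctly isolate the unique genuine exception, the cubic Veronese surface $X=v_3(\PP^2)\subset\PP^9$, where $T_X(-1)\cong\Omega^1_{\PP^2}$ gives $h^1(T_X(-1))=1$, and you correctly reduce that case to the vanishing of the coboundary $\delta\colon H^0(N_X(-1))\to H^1(T_X(-1))$ in the cohomology of \eqref{norm2}, since $\alpha(X)=\dim\operatorname{im}\delta$ by Proposition~\ref{inf}(i) and $H^0(T_X(-1))=0$.

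However, that vanishing is the entire content of this case, and you explicitly leave it unproven; so the proposal has a genuine gap at exactly the one case the paper argues in detail. The paper closes it as follows. From (the dual of) \eqref{twe} one gets $h^1({\Omega^1_{\PP^9}}_{|X})=1$, using $H^1(\O_X(-1))=H^1(\PP^2,\O_{\PP^2}(-3))=0$. The composition $H^1(\Omega^1_{\PP^9}) \to H^1({\Omega^1_{\PP^9}}_{|X}) \to H^1(\Omega^1_X)$ sends the hyperplane class to three times the class of a line, hence is an isomorphism of $1$-dimensional spaces, and therefore the second map $H^1({\Omega^1_{\PP^9}}_{|X}) \to H^1(\Omega^1_X)$ is an isomorphism. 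Now the numerical accident peculiar to $d=3$, $n=2$ — precisely your obstruction case — is that $\omega_X\cong\O_X(-1)$, so Serre duality on the surface $X$ converts this into an isomorphism $H^1(T_X(-1)) \to H^1({T_{\PP^9}}_{|X}(-1))$. Its injectivity in the long exact sequence of \eqref{norm2} forces $\delta=0$, whence $h^0(N_X(-1))=h^0({T_{\PP^9}}_{|X}(-1))=10=r+1$ by Proposition~\ref{inf}(i), i.e.\ $\alpha(X)=0$, and Theorem~\ref{zak} concludes, exactly as you anticipated. So your reduction was the right one; what is missing is this short duality argument (or an equivalent direct computation of $h^0(N_X(-1))$), without which the Veronese case of the proposition remains unproved.
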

\begin{proof}
For the Veronese surface $X \subset \PP^9$ observe that $H^1({\Omega^1_{\PP^9}}_{|X}) = \mathbb C$ by \eqref{twe}. Now the map 
$$\mathbb C = H^1(\Omega^1_{\PP^9}) \to H^1({\Omega^1_{\PP^9}}_{|X}) \to H^1(\Omega^1_X) = \mathbb C$$ 
is an isomorphism, hence so is
$H^1({\Omega^1_{\PP^9}}_{|X}) \to H^1(\Omega^1_X)$. Dualizing we get the isomorphism 
$$H^1(T_X(-1)) \to H^1({T_{\PP^9}}_{|X}(-1)).$$ 
Now using \eqref{norm2} and Proposition \ref{inf}(i) we get that $\alpha(X)=0$, hence Theorem \ref{zak} applies.

In all other cases, it is easily seen (for coverings use \cite[Prop.~2.7]{fuj}) that $H^1(T_X(-1))=0$, hence Proposition \ref{tx} applies.
\end{proof}

More generally and recently Russo, using a description of the Hilbert scheme of lines passing through a general point, proves in \cite{r1, r2} that irreducible hermitian symmetric spaces in their homogeneous embedding and adjoint homogeneous manifolds, with some obvious exceptions, are not extendable.

A celebrated result that also deserves to be mentioned in the recollection of the old days, is the so-called Babylonian tower theorem \cite{bv, bar, t, f, co}:

\begin{thm}

Let $X \subset \PP^r$ be a l.c.i. closed subscheme of pure codimension $c \ge 1$. If $X$ extends to a l.c.i. closed subscheme $Y  \subset \PP^{r+m}$ of pure codimension $c$ for all $m>0$, then $X$ is a complete intersection.
\end{thm}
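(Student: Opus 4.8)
The plan is to prove the Babylonian tower theorem by reducing it, via the hypothesis of extendability in all codimensions, to a statement about the ambient linear bundle becoming trivial on $X$, and then invoking the classical criterion that a l.c.i. subscheme whose conormal bundle is ``split off'' the restriction of $\cO(1)$ in an arithmetic sense must be a complete intersection. The key philosophical point is that an infinite tower of extensions $X \subset Y_1 \subset Y_2 \subset \cdots \subset Y_m \subset \cdots$, all of the same codimension $c$ and all l.c.i., forces enough sections of the twisted ideal sheaf $\cI_X(d)$ to lift compatibly up the entire tower, and in the limit this produces $c$ global hypersurfaces cutting out $X$ scheme-theoretically.

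First I would set up the bookkeeping for a single extension. If $Y \subset \PP^{r+1}$ is a l.c.i. extension of $X \subset \PP^r$ of the same pure codimension $c$, then restricting to the hyperplane $\PP^r$ gives, exactly as in \eqref{norm} in the proof of Proposition~\ref{bd}, a comparison between the conormal sheaf of $Y$ and that of $X$, together with exact sequences relating $\cI_Y(d)$ and $\cI_X(d)$. The crucial cohomological input is to track how $H^0(\cI_X(d))$ — the degree-$d$ forms vanishing on $X$ — behaves under adding one more variable. Concretely, adding a variable $x_{r+1}$ and requiring that a degree-$d$ generator of the ideal of $Y$ restrict to a generator of the ideal of $X$ imposes that the relevant multiplication and restriction maps on sections stay surjective. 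I would phrase this as a stabilization statement: the minimal number of degree-$d$ equations, and more importantly the syzygies among them, cannot grow as one climbs the tower.

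Next I would pass to the limit. Collecting all the $Y_m$ and the compatible restriction maps, I would form the inverse system of homogeneous ideals and argue that, because $X$ is l.c.i. of codimension $c$, the conormal bundle $N^\vee_X$ is locally free of rank $c$; the infinite extendability then pins down its class. The cleanest route is to show that infinite extendability forces $H^0(N^\vee_X(1)) \to H^0(\cO_X(1))^{\oplus ?}$ — more precisely the natural map expressing the conormal bundle as a quotient of a trivial bundle twisted by $\cO(1)$ — to split, so that $N^\vee_X \cong \cO_X(-1)^{\oplus c}$. Once the conormal bundle is a direct sum of copies of $\cO_X(-1)$, one knows that the $c$ defining sections can be taken to be restrictions of $c$ linear-degree hypersurfaces, and the l.c.i.\ hypothesis upgrades this to a scheme-theoretic complete intersection via a regular sequence argument. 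This is where I would cite the Auslander–Buchsbaum / Koszul machinery implicit in the references \cite{bv, bar, t, f, co}.

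The hard part, I expect, is the limit/stabilization step: showing that the tower of extensions genuinely forces the splitting $N^\vee_X \cong \cO_X(-1)^{\oplus c}$ rather than merely bounding some cohomological invariant. A single extension only gains ``one direction'' of triviality, and one must argue that infinitely many independent directions accumulate to a full splitting, controlling possible obstructions in $H^1$ along the way. A subtlety is that the theorem is stated for \emph{schemes}, not smooth varieties, so the vanishing theorems used elsewhere in this survey (Kodaira vanishing in Proposition~\ref{inf}) are unavailable; one must instead work with the depth and local-complete-intersection hypotheses directly, using local duality and the purity of codimension to control the conormal sheaf. Properly organizing this passage to the limit — ensuring the compatible liftings assemble into honest global hypersurfaces and that no positive-dimensional obstruction survives — is the genuine obstacle, and it is exactly the content that the cited Babylonian-tower literature develops in detail.
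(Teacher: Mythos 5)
The paper itself offers no proof of this theorem: it is quoted from the literature with the citations \cite{bv, bar, t, f, co}, so your attempt has to be measured against those proofs. Measured that way, it contains a fatal error at its central step. You claim that infinite extendability forces the splitting $N^\vee_X \cong \cO_X(-1)^{\oplus c}$, so that $X$ is cut out by $c$ hypersurfaces of degree one; this would prove that $X$ is a \emph{linear subspace}, which is absurd. Any hypersurface of degree $d \ge 2$ is l.c.i.\ of pure codimension $1$, infinitely extendable (cf.\ Remark~\ref{nonmig}), and has $N^\vee_X \cong \cO_X(-d)$, not $\cO_X(-1)$. The correct target can only be $\cI_X/\cI_X^2 \cong \bigoplus_{i=1}^c \cO_X(-d_i)$ for some degrees $d_i \ge 1$. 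Moreover, even granting such a splitting, you are not done: to conclude that $X$ is a complete intersection you must lift the generating sections of $(\cI_X/\cI_X^2)(d_i)$ to honest forms in $H^0(\cI_X(d_i))$ and show they generate the ideal, and the obstruction to this lifting lives in cohomology of $\cI_X^2(d_i)$; killing it requires further input from the tower, not the splitting alone. Your closing sentence waves at this as ``a regular sequence argument,'' but it is a genuine step, not an automatic upgrade.

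There is also a structural problem with your passage to the limit. The hypothesis provides, for each $m$, \emph{some} extension $Y_m \subset \PP^{r+m}$; it does not provide a single compatible chain $X \subset Y_1 \subset Y_2 \subset \cdots$, so the inverse system of homogeneous ideals you propose to assemble does not exist. The proofs in \cite{bv, bar, t, f, co} run in the opposite direction from yours: one climbs to $Y_m$ for $m \gg 0$, where Barth--Larsen-type stabilization of cohomology (or, in Flenner's version \cite{f}, formal geometry and local cohomology on the punctured spectrum of the affine cone, avoiding any smoothness or Kodaira-type vanishing) shows that the extension at the top is itself a complete intersection --- morally because bundles on ``infinite projective space'' split into line bundles, by Tjurin's theorem \cite{t} (see \cite{co} for a simple proof). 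One then descends: each $Y_m$ individually does give a finite chain, since successive linear sections of an l.c.i.\ of pure codimension $c$ whose bottom section still has codimension $c$ remain l.c.i.\ of pure codimension $c$, and restricting a regular sequence cutting out a complete intersection to such a linear section again yields a regular sequence. So the complete-intersection property propagates \emph{down} the tower to $X$; no compatibility between the $Y_m$ for different $m$ is ever needed. Your instinct that the difficulty lies in organizing the limit is right, but the limit you set up is the wrong one, and the degree-one splitting it is meant to produce is false.
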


On the other hand, if one drops the l.c.i. condition for all the extensions, then it is easy to get examples of smooth varieties that are infinitely extendable and are not complete intersections, such as arithmetically Cohen-Macaulay space curves (a more general result is in \cite{bal}).

We end this section by just mentioning many very nice results obtained by Beltrametti, Sommese, Van de Ven, Fujita, Badescu, Serrano, Lanteri, Palleschi, and in general by the people working in adjunction theory. It would be too long, for the purposes of this survey, to recall them here. We merely recall a short list of some relevant papers here \cite{sv, fuj1, se, fuj, so, lps1, lps2, lps3}, referring the reader to \cite{bs} and references therein.

\section{Extendability of canonical curves, K3 surfaces and Fano threefolds}
\label{can}

The most beautiful, in my view, part of the story is the incredible amount of nice mathematics that revolved around the study of extendability of canonical curves, still going strong today. This is very much intertwined with the study of extendability of K3 surfaces, as the second part of Proposition \ref{cano} shows.

Throughout this section, we will let $C \subset \PP^{g-1}$ be a canonically embedded smooth irreducible curve of genus $g \ge 3$ (just called a {\it canonical curve}).

While for $g = 3, 4$ we know that $C$ is a complete intersection in $\PP^{g-1}$, namely a plane quartic or a complete intersection of a quadric and a cubic in $\PP^3$, the first case to be considered, in terms of extendability, is $g=5$. In that genus the first phenomenon occurs: the curve might be trigonal or not. It is a famous theorem of Enriques and Petri that then $C$ is the intersection of three quadrics (hence extendable) if and only if it is not trigonal. Thus the Brill-Noether theory of $C$ comes into play, as far as extendability is concerned. As we will see, this is one of the main themes.

We now concentrate on what happens for a general curve, the meaning of general to be clear along the way.

In a series of beautiful papers, the cases of genus $6 \le g \le 11$ were settled by Mori and Mukai.

\begin{thm} \cite{m1, m2, m3, m4, mm}
\label{mukai}

Let $C \subset \PP^{g-1}$ be a canonical curve of genus $g$. Then:
\begin{itemize}
\item[(i)] If $g=6$, then $C$ extends to a quadric section of $G(2,5)$ if and only if $C$ has finitely many $g^1_4$'s;
\item[(ii)] If $g=7$, then $C$ extends to a section of $OG(5,10)$ if and only if $C$ has no $g^1_4$'s;
\item[(iii)] If $g=8$, then $C$ extends to a section of $G(2,6)$ if and only if $C$ has no $g^2_7$'s;
\item[(iv)] If $g=9$, then $C$ extends to a section of $SpG(3,6)$ if and only if $C$ has no $g^1_5$'s;
\item[(v)] If $g=10$, then a general $C$ is not smoothly extendable;
\item[(vi)] If $g=11$, then a general $C$ is smoothly extendable.
\end{itemize}
\end{thm}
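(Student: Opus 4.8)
The plan is to carry out Mukai's program, which for $6 \le g \le 9$ realises the general polarised K3 surface of genus $g$ as a complete linear section of a fixed homogeneous variety $M_g \subset \PP^N$ --- respectively $G(2,5)$ cut also by a quadric, $OG(5,10)$, $G(2,6)$ and $SpG(3,6)$ --- and recovers the canonical curve as one further hyperplane section. The starting point is Proposition~\ref{cano}: any smooth surface extension $S \subset \PP^g$ of $C$ is a K3 surface with $C \in |\O_S(1)|$. These homogeneous varieties sit at the top of an extension tower $C \subset S \subset V \subset \cdots \subset M_g$, with $V$ the associated prime Fano threefold, and Mukai shows that a K3 section of the correct genus is automatically a linear section of $M_g$. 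Thus the statement reduces to matching the existence of the extension $C \subset S$ with the stated Brill--Noether condition, the point being that in each case that condition is precisely the genericity --- finitely many series in the $\rho = 0$ case $g=6$, no exceptional series in the $\rho = -1$ cases $g = 7,8,9$ --- that characterises the transverse linear sections of $M_g$.

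For the equivalences (i)--(iv) I would argue through the tautological bundle of $M_g$ in one direction and a Lazarsfeld--Mukai construction in the other. If $C$ extends to a section of $M_g$, then restricting the tautological bundle of $M_g$ exhibits $C$ as Brill--Noether general in the required sense: the excluded series ($g^1_4$ for $g=7$, $g^2_7$ for $g=8$, $g^1_5$ for $g=9$) would be incompatible with being a transverse linear section, while for $g=6$ only finitely many $g^1_4$'s can survive. For the reverse implication, Mukai's reconstruction theorem produces from a Brill--Noether-general curve $C$ a rigid semistable Lazarsfeld--Mukai bundle $E$, whose space of sections maps $C$ into $M_g$ and realises it as a linear section; a two-dimensional linear section of $M_g$ through $C$ is then a K3 surface by Proposition~\ref{cano}, so $C$ extends. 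The genus-6 case is the outlier in that finitely many $g^1_4$'s ($\rho = 0$) is the generic behaviour, and it is the presence of infinitely many --- as for bielliptic curves --- that obstructs the quadric section of $G(2,5)$.

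The cases (v) and (vi) are governed instead by the family of pairs $(S,C)$ formed by a genus-$g$ K3 surface $S$ and a smooth curve $C \in |\O_S(1)|$, of dimension $19 + g$, and by its image $\mathcal K_g \subset M_g$. For $g = 11$, where $19 + g = 3g - 3 = 30$, Mukai reconstructs the K3 from $C$ through a moduli space of bundles on $C$ and shows the assignment $(S,C) \mapsto [C]$ is dominant with finite fibres; hence the general genus-11 curve lies on a K3 and is smoothly extendable, which is (vi). For $g = 10$ the decisive input is Mukai's classification: every genus-10 K3 surface is a codimension-three linear section $G_2/P \cap \PP^{10}$ of the $G_2$-variety $G_2/P \subset \PP^{13}$, and the curve is a section $G_2/P \cap \PP^9$. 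Parametrising these by the Grassmannian of such linear spaces modulo the $14$-dimensional group $G_2$ gives $\dim \mathcal K_{10} = 40 - 14 = 26$, one less than $\dim M_{10} = 27$. So the general genus-10 curve lies on no K3 surface and, by Proposition~\ref{cano}, admits no smooth extension, which is (v).

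The hardest part, I expect, is the genus-10 defect in (v). The naive count $19 + 10 = 29 > 27$ predicts dominance, so the real content is Mukai's theorem that every genus-10 K3 is a linear section of the $G_2$-variety --- equivalently, that a general such curve lies on a positive-dimensional family of K3 surfaces --- which is what makes $\mathcal K_{10}$ drop to codimension one. This depends on the fine geometry of $G_2/P$ and is not accessible by a dimension count alone; consistently, the Gaussian/Wahl corank of the general genus-10 curve equals $\alpha(C) = 1$, so $C$ is infinitesimally but not smoothly extendable, and the delicate point is to distinguish the first-order extension from a genuine one. For (i)--(iv) the technical heart is instead the stability and rigidity of the Lazarsfeld--Mukai bundles and the identification of their cohomology with the embedding into $M_g$, which is what turns restriction of the tautological bundle into a true inverse of the Brill--Noether construction.
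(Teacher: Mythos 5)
The survey does not actually prove Theorem \ref{mukai}: it is quoted from \cite{m1, m2, m3, m4, mm}, so your proposal can only be measured against those sources and against the survey's neighbouring results. For (i)--(iv) and (vi) your outline is faithful to the actual route: restriction of the tautological bundle in one direction, Lazarsfeld--Mukai bundles and Mukai's reconstruction in the other, and for (vi) the Mori--Mukai dominance of $c_{11}$ (finiteness of the general fibre is a later refinement, \cite{m7} and \cite[Thm.~5]{clm1}, and is not needed for (vi)). One caveat, harmless for the stated iff about curves: only Brill--Noether \emph{general} polarized K3 surfaces of the relevant genus are linear sections of $M_g$, not ``a K3 section of the correct genus'' automatically.

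The genuine gap is in (v). First, your assertion that \emph{every} genus-$10$ K3 surface is a codimension-three linear section of $G_2/P$ is false: only Brill--Noether general ones are (for instance, a genus-$10$ K3 carrying an elliptic pencil of low degree on the polarization has trigonal hyperplane sections, which are never linear sections of $G_2/P$). Consequently your count $\dim \mathbb G(9,13) - \dim G_2 = 40 - 14 = 26 < 27$ only bounds the locus of curves lying on BN-general K3s. Pairs $(S,C)$ with $S$ in one of the special (lattice-polarized, up to $18$-dimensional) families of K3s sweep out families of dimension up to $28 > 27 = \dim \mathcal M_{10}$, so a dimension count alone cannot exclude that curves on \emph{special} K3s dominate $\mathcal M_{10}$; closing this requires either Mukai's bundle-theoretic argument applied to an arbitrary pair $(S,C)$, or, as the survey itself indicates right after Theorem \ref{chm}, the Ciliberto--Harris--Miranda/Voisin theorem: the general genus-$10$ curve has \emph{surjective} Wahl map, hence by Theorem \ref{wah} lies on no K3 at all, hence by Proposition \ref{cano} is not smoothly extendable. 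Second, your parenthetical claim that the general genus-$10$ curve has ${\rm cork}\, \Phi_{\omega_C} = \alpha(C) = 1$ (``infinitesimally but not smoothly extendable'') contradicts Theorem \ref{chm}: for the general genus-$10$ curve the corank is $0$, and by Zak--L'vovsky's theorem it is not extendable in any way; it is the genus-$10$ curves actually lying on general K3s whose corank is large (greater than $1$, cf.\ \cite[Table 2]{clm1}, consistent with such K3s extending three further steps to the $G_2$ fivefold). So the content of (v) is precisely that the K3 locus is a divisor in $\mathcal M_{10}$, and your proof of that statement has a hole exactly where you located the difficulty, but the ingredient you invoke to fill it is stated incorrectly and, as stated, does not suffice.
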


In the above theorem, $G(k,n)$ is the usual Grassmannian, $OG(k,2n)$ is the orthogonal Grassmannian, that is the variety that parametrizes $k$-dimensional vector subspaces of $\mathbb C^{2n}$ that are isotropic with respect to a non-degenerate symmetric bilinear form on $\mathbb C^{2n}$ and $SpG(n,2n)$ is the symplectic Grassmannian, that is, the Grassmannian of Lagrangian subspaces of a $2n$-dimensional symplectic vector space.

Taking into account the second part of Proposition \ref{cano}, the above theorem clearly suggests that one should look more closely at the moduli spaces of curves and K3 surfaces.

Let $\mathcal M_g$ be the moduli space of curves of genus $g$. Let $\mathcal K_g$ be the moduli stack of $K3$ surfaces  of genus $g$, that is pairs $(S, L)$ with $S$ a smooth K3 surface, and $L$ an ample, globally generated line bundle on $S$ with $L^2 = 2g - 2$. Let $\mathcal K \mathcal C_g$ be the moduli space of pairs $(S,C)$ such that $(S, \O_S(C)) \in \mathcal K_g$. Then one has a forgetful morphism
\begin{equation}
\label{pi}
c_g : \mathcal K \mathcal C_g \to \mathcal M_g
\end{equation}
and clearly the question is when is $c_g$ dominant. Since $\dim \mathcal K \mathcal C_g = g+19$ and $\dim \mathcal M_g = 3g-3$ one gets that this is possible if $g \le 11$. Now, on the one hand, Theorem \ref{mukai} shows that $c_g$ is dominant if $g \le 9$ or $g=11$. On the other hand, a clean connection with Wahl's theorem, was made by Ciliberto, Harris and Miranda (later proved also by Voisin).

\begin{thm} \cite{chm, v}
\label{chm}

Let $C$ be a general curve of genus $g = 10$ or $g \ge 12$. Then $\Phi_{\omega_C}$ is surjective. In particular $C$, in its canonical embedding, is not extendable.
\end{thm}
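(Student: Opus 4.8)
The plan is to reduce the non-extendability assertion to surjectivity of $\Phi_{\omega_C}$ and then to establish the latter by a dimension count together with a semicontinuity/degeneration argument. For the reduction, Proposition \ref{cano} gives $\alpha(C) = {\rm cork}\,\Phi_{\omega_C}$, so surjectivity of the Wahl map is precisely the statement $\alpha(C)=0$. A canonical curve of genus $g\ge 10$ has codimension $g-2\ge 1$ in $\PP^{g-1}$ and is not a quadric, so the first part of Theorem \ref{zak} then yields that $C$ is not extendable. Everything therefore reduces to proving that $\Phi_{\omega_C}$ is surjective for the general $C$.

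First I would record the numerology. One has $\dim \Lambda^2 H^0(\omega_C)=\binom{g}{2}$, while $\deg\omega_C^3=6g-6>2g-2$ gives $\dim H^0(\omega_C^3)=5g-5$ by Riemann--Roch. Hence surjectivity forces $\binom{g}{2}\ge 5g-5$, i.e. $g\ge 10$, which is exactly the range appearing in the statement. The excluded value $g=11$ is instructive: there $\binom{11}{2}=55>50$, so surjectivity is numerically allowed, yet it fails, because the general curve of genus $11$ lies on a $K3$ surface (Theorem \ref{mukai}(vi)) and Wahl's theorem applies. This shows that the numerical bound is necessary but not sufficient and that the real content is geometric. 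For $g=10$ one has equality $\binom{10}{2}=45=5\cdot 10-5$, so there surjectivity is the same as bijectivity.

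To prove surjectivity I would use that in a flat family of smooth curves the function $[C]\mapsto {\rm cork}\,\Phi_{\omega_C}$ is upper semicontinuous, the Wahl map being a morphism between the vector bundles $\Lambda^2\pi_*\omega_{\mathcal C/B}$ and $\pi_*\omega_{\mathcal C/B}^{3}$. It therefore suffices to exhibit, for each relevant genus, a single curve on which $\Phi_{\omega_C}$ is surjective. The efficient way to do this is by degeneration: choose a flat family over a disc with smooth general fiber and special fiber a suitable stable curve $C_0$ whose components are rational — for instance a graph curve, i.e. a nodal union of $\PP^1$'s with trivalent dual graph, on which $\omega_{C_0}$ restricts to $\O_{\PP^1}(1)$ on each component. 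Replacing $\omega_C$ by the relative dualizing sheaf throughout produces a family of Wahl maps whose flat limit at $C_0$ is combinatorially computable: $H^0(\omega_{C_0})$, $H^0(\omega_{C_0}^3)$ and the map between them decompose according to the vertices and edges of the dual graph, and surjectivity of the limit map follows from a combinatorial verification that the incidence data span the target. By semicontinuity this propagates to the nearby smooth fibers, hence to the general curve of genus $g$.

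The main obstacle is this combinatorial verification in the boundary case $g=10$, where the map must be an isomorphism and one cannot afford to lose a single dimension; the degenerate model and the induced map must be chosen and computed with great care (alternatively, one may show that an explicit determinant is nonzero on a single curve). A secondary technical point is to identify correctly the flat limit of the maps $\Phi_{\omega_{C_t}}$, accounting for the contribution of the nodes to $\omega_{\mathcal C/B}$ and checking that the naive combinatorial map really is that limit. For $g\ge 12$ there is positive numerical slack, so once the degeneration machinery is set up surjectivity is comparatively robust; the genus $10$ case is the crux. An alternative proof, due to Voisin, reorganizes the computation but rests on the same principle of specializing to a curve for which the map can be controlled.
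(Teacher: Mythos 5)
Your proposal is correct and follows essentially the same route as the paper: the survey itself only proves the ``in particular'' clause, deducing non-extendability from Proposition \ref{cano} together with Zak--L'vovsky's theorem exactly as you do, and cites \cite{chm, v} for the surjectivity of $\Phi_{\omega_C}$. Your sketch of that surjectivity --- semicontinuity plus degeneration to graph curves with a combinatorial verification, with the genus-$10$ equality case $\binom{10}{2}=45=5\cdot 10-5$ as the crux --- is precisely the method of the cited paper \cite{chm} (the same degeneration technique the survey describes in connection with Theorem \ref{ck1}).
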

The second part (see also \cite[Cor.~page 26]{hm}) follows by Proposition \ref{cano} and Zak-L'vovsky's theorem.

Once the dominance of $c_g$ is settled, the next question in order is the study of its nonempty fibers, or, in other words, the study of how many ways can a hyperplane section of a K3 surface extend. A speculation about this was already made in \cite{cm}, where some evidence was presented to the idea that the corank of the Wahl map, in that case, for $g=11$ or $g \ge 13$, should be one. This speculation turned out to be true.

We will use the following.

\begin{defi}
\label{pri}
A {\it prime K3 surface of genus $g$} is a smooth K3 surface $S \subset \PP^g$ of degree $2g-2$ such that $\Pic(S)$ is generated by the hyperplane bundle. We denote by $\H_g$ the unique component of the Hilbert scheme of prime K3 surfaces of genus $g$.
\end{defi}

The idea of \cite{clm1} is to degenerate a prime K3 surface to the union of two rational normal scrolls and then to degenerate the latter to a union of planes whose hyperplane section is a suitable graph curve having corank one Wahl map. The result obtained is the following

\begin{thm} {\rm (corank one theorem \cite[Thm.~4]{clm1})}
\label{ck1}

Suppose that $g=11$ or $g \ge 13$. Let $S$ be a K3 surface represented by a general point in $\H_g$ and let $C$ be a general hyperplane section of $S$. Then ${\rm cork} \Phi_{\omega_C} = 1$.
\end{thm}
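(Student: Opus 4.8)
The plan is to establish the two inequalities $\mathrm{cork}\,\Phi_{\omega_C}\ge 1$ and $\mathrm{cork}\,\Phi_{\omega_C}\le 1$ separately, getting the first essentially for free and obtaining the second through a degeneration to a graph curve. For the lower bound, observe that $C$ is a hyperplane section of the smooth K3 surface $S\subset\PP^g$, so $S$ is a smooth extension of the canonical curve $C\subset\PP^{g-1}$; since a canonical curve of genus $g\ge 3$ is not a quadric, Theorem \ref{zak} forces $\alpha(C)\ge 1$, and by Proposition \ref{cano} this reads $\mathrm{cork}\,\Phi_{\omega_C}\ge 1$. Thus everything rests on the reverse inequality for the general pair.

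For the upper bound I would invoke upper semicontinuity of the dimension of the cokernel of the Wahl (Gaussian) map of the dualizing sheaf along a flat family of connected curves of arithmetic genus $g$. Along such a family $h^0(\omega_{C_t})=g$ and, since $h^1(\omega_{C_t}^{3})=0$, also $h^0(\omega_{C_t}^{3})=5g-5$ are constant, so the Wahl map globalizes to a morphism of vector bundles of ranks $\binom{g}{2}$ and $5g-5$ over the base, and its cokernel can only jump up on closed subsets. Hence it suffices to exhibit one flat degeneration of $C$, staying inside the universal family over the closure of $\H_g$, to a curve $C_0$ with $\mathrm{cork}\,\Phi_{\omega_{C_0}}\le 1$.

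The degeneration is the one sketched before the statement. First degenerate a general prime K3 surface $S\in\H_g$ to a union $R_1\cup R_2$ of two rational normal scrolls glued along an elliptic curve (a Type II degeneration), and then degenerate the scrolls to a union of $2g-2$ planes $P\subset\PP^g$; the hyperplane section correspondingly specializes from $C$ to a graph curve $C_0$, a connected nodal union of $2g-2$ lines whose dual graph is trivalent with $2g-2$ vertices and $3g-3$ edges. One must check that $R_1\cup R_2$, and then $P$, lie in the closure of $\H_g$, i.e. are smoothable to a prime (Picard number one) K3 of genus $g$; this is exactly what keeps the degeneration inside the locus of K3 sections. On a trivalent graph curve $\omega_{C_0}$ restricts to $\O(1)$ on each line, so $H^0(\omega_{C_0})$, $H^0(\omega_{C_0}^{3})$ and the Wahl map all split into local pieces subject to gluing conditions at the $3g-3$ nodes, and $\mathrm{cork}\,\Phi_{\omega_{C_0}}$ becomes a purely combinatorial invariant of the graph. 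The task then reduces to choosing the configuration of planes, equivalently the trivalent graph, so that this invariant equals $1$.

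The step I expect to be the \textbf{main obstacle} is precisely this combinatorial corank computation on the graph curve, together with exhibiting a trivalent graph that is realizable by a union of planes smoothable to a prime K3 of the prescribed genus. One must separate the single ``extension'' class already forced by the lower bound from any further cokernel classes carried by the cycles of the graph, and verify that the latter vanish exactly when $g=11$ or $g\ge 13$. Conceptually this is explained by the classification of prime Fano threefolds, which exist precisely for the excluded genera $g\le 10$ and $g=12$: in those cases the K3 surface $S$ is itself a hyperplane section of a Fano threefold, so $C$ is multiply extendable and $\alpha(C)=\mathrm{cork}\,\Phi_{\omega_C}\ge 2$ by Proposition \ref{bd}, whereas for $g=11$ and $g\ge 13$ no such threefold exists and $S$ should be the terminal extension. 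Granting the graph-curve computation, semicontinuity yields $\mathrm{cork}\,\Phi_{\omega_C}\le\mathrm{cork}\,\Phi_{\omega_{C_0}}=1$ for the general K3 section, which together with the first paragraph gives the asserted equality.
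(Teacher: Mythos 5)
Your proposal follows essentially the same route the paper indicates: the survey does not reprove the theorem but sketches exactly your strategy --- the lower bound coming from the existence of the K3 extension (Wahl's theorem, or equivalently Zak--L'vovsky combined with Proposition \ref{cano}), and the upper bound by semicontinuity of the corank after degenerating the prime K3 through a union of two rational normal scrolls to a union of $2g-2$ planes whose hyperplane section is a trivalent graph curve with corank-one Wahl map. The step you flag as the main obstacle --- exhibiting, for $g=11$ and each $g\ge 13$, a trivalent graph realizable by a configuration of planes smoothable to a prime K3 and computing that its graph curve has ${\rm cork}\,\Phi_{\omega_{C_0}}=1$ --- is precisely the technical core that the paper delegates to \cite[Thm.~4]{clm1}, so your outline matches the paper's argument at the same level of detail.
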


As a matter of fact, when $g \ge 17$, the same result holds for any smooth section of $S$ (see \cite[Thm.~2.15 and Rmk.~2.23]{clm2}).

Together with some other computations of coranks (see \cite{clm1}), there are two immediate consequences. 

The first one is about the map $c_g$ (see \eqref{pi}).

\begin{thm} \cite[Thm.~5]{clm1}

\begin{itemize}
\item[(i)] If $3 \le g \le 9$ and $g=11$, then $c_g$ is dominant and the general fiber is irreducible;
\item[(ii)] $\codim \Im c_{10} = 1$;
\item[(iii)] $\codim \Im c_{12} = 2$;
\item[(iv)] If $g=11$ or $g \ge 13$, then $c_g$ is birational onto its image;
\item[(v)] If $g=11$ or $g \ge 13$, then a general canonical curve that is a hyperplane section of a prime K3 surface, lies on a unique one, up to projective transformations.
\end{itemize}
\end{thm}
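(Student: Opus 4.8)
The plan is to translate every assertion into the invariant $\alpha(C) = {\rm cork}\,\Phi_{\omega_C}$ via Proposition \ref{cano}, and then to read it off from the dimension and the fibres of $c_g$. The crucial reduction is the second part of Proposition \ref{cano}: any smooth surface $S\subset\PP^g$ cutting out a given canonical curve $C\subset\PP^{g-1}$ as a hyperplane section is automatically a K3 surface. Hence, fixing the canonical model of a general curve $C$ in the image of $c_g$, the fibre $c_g^{-1}([C])$ is identified, up to the group of projective transformations of $\PP^g$ fixing $C$, with the variety of non-cone surface extensions of $C\subset\PP^{g-1}$. The whole theorem therefore becomes a statement about this extension variety, whose governing number is exactly $\alpha(C)$.

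First I would isolate the following comparison as the engine of the proof: whenever $\alpha(C)$ is small enough that L'vovsky's version of Theorem \ref{zak} applies, that is $\alpha(C) < g-1$, the higher extendability of $C$ is blocked, the surface extensions cannot be cut from extensions of larger dimension, and one has $\dim c_g^{-1}([C]) = \alpha(C)-1$ for $C$ general in the image; in particular, when $\alpha(C)=1$ the extension variety is a single reduced point and $C$ lies on a unique K3 up to projective transformation. Granting this, parts (iv) and (v) are immediate: for $g=11$ or $g\ge 13$ Theorem \ref{ck1} gives $\alpha(C)=1$ for a general prime K3 section, and since $1<g-1$ the comparison makes $c_g$ generically injective, hence birational onto its image, which is (iv), with (v) its geometric rephrasing. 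Part (iii) follows the same way once the separate corank computation of \cite{clm1} supplies $\alpha(C)=1$ for $g=12$: then $c_{12}$ is generically finite onto its image, so $\dim\Im c_{12} = \dim\mathcal{K}\mathcal{C}_{12} = 31$ and $\codim\Im c_{12} = 33-31 = 2$.

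For part (i) I would argue independently of the comparison, since for $3\le g\le 8$ one has $\alpha(C)\ge g-1$ and the curve is genuinely more extendable. Dominance of $c_g$ for $3\le g\le 9$ and $g=11$ is immediate from Theorem \ref{mukai}: a general curve satisfies the Brill--Noether genericity hypotheses there, hence extends, and $\dim\mathcal{K}\mathcal{C}_g = g+19 \ge 3g-3 = \dim\mathcal{M}_g$ exactly in this range. Irreducibility of the general fibre I would deduce from Mukai's reconstruction theorem: the Mukai homogeneous variety $M_g$ is canonically recovered from the Brill--Noether loci of $C$, so the K3's through $C$ are precisely the K3 linear sections of this fixed $M_g$ containing $C$, an irreducible (rational, homogeneous-type) family. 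For part (ii) the separate computation of \cite{clm1} gives $\alpha(C)=4$ for a general genus-$10$ K3 section; since $4<9$ the comparison bounds $\dim c_{10}^{-1}([C])\le 3$, so $\dim\Im c_{10}\ge 26$ and $\codim\Im c_{10}\le 1$, while Theorem \ref{chm} shows a general genus-$10$ curve has surjective Wahl map and is thus not a K3 section, so $c_{10}$ is not dominant and $\codim\Im c_{10}\ge 1$; equality follows.

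The hard part will be the comparison lemma together with the two ad hoc corank computations. Proving $\dim c_g^{-1}([C]) = \alpha(C)-1$ rigorously requires the full strength of L'vovsky's extension theory — identifying embedded surface extensions with the relevant part of $H^0(N_C(-1))$ and carefully accounting for the projective automorphisms fixing $C$ — rather than the crude monotonicity of $\alpha$ used in the simplified proof of Theorem \ref{zak} and in Proposition \ref{bd}. The genus-$10$ case is the most delicate, since one must both bound the fibre and reconcile it with the exceptional behaviour of $\mathcal{M}_{10}$, and the genus-$12$ corank computation lies outside the reach of the degeneration to unions of planes that yields Theorem \ref{ck1}, so it needs a genuinely separate argument.
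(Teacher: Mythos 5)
Your overall strategy --- translating the theorem into the invariant $\alpha(C)={\rm cork}\,\Phi_{\omega_C}$ via Proposition \ref{cano} and reading the fibres of $c_g$ off the corank computations of \cite{clm1} --- is indeed the route of the original paper, and your treatment of (ii) (upper bound on the fibre from $\alpha$, lower bound on the codimension from Theorem \ref{chm}) matches it. But there is a concrete error that breaks part (iii): you feed in ``$\alpha(C)=1$ for $g=12$'', whereas the survey itself records the opposite from \cite[Table 2]{clm1}: for $g=12$ (as for $6\le g\le 10$) a general hyperplane section of a general prime K3 surface has ${\rm cork}\,\Phi_{\omega_C}>1$. This is forced by geometry: prime Fano threefolds of genus $12$ exist (the theorem quoted from \cite[Thm.~6 and 7]{clm1}), so such a curve $C$ is $2$-extendable and $\alpha(C)\ge 2$ by Theorem \ref{zak}. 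Your own machinery then becomes internally inconsistent: if $\alpha(C)$ were $1$, your argument for (iv)--(v) would make $c_{12}$ birational and the K3 through $C$ unique, yet the pencil of hyperplane sections through $\langle C\rangle$ of a genus-$12$ Fano containing $C$ produces a positive-dimensional family of K3 surfaces through $C$. With the correct corank your ``engine'' no longer yields the stated equality $\codim \Im c_{12}=2$, and indeed the genus-$12$ case cannot be settled by the dimension bookkeeping you propose; it requires the separate corank computation and a finer analysis of the fibre in \cite{clm1}.

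Two further gaps. First, your comparison lemma asserts the \emph{equality} $\dim c_g^{-1}([C])=\alpha(C)-1$ ``via the full strength of L'vovsky's extension theory''. L'vovsky-type normal-bundle arguments only give the upper bound, and even that requires $H^0(N_C(-2))=0$, which is not automatic: it is \cite[Lemma 4]{clm1}, proved by the same degeneration to unions of planes that yields Theorem \ref{ck1}, and you never secure it. The lower bound (existence of all the predicted extensions) is the ribbon-integration theorem (Theorem \ref{cds3}, resting on the vanishing of Theorem \ref{abs}), a 2020 result with hypotheses $g\ge 11$, $\Cliff(C)\ge 3$ --- unavailable from the tools you cite, and in any case not what \cite{clm1} used; fortunately, for (ii), (iv), (v) only the upper bound plus nonemptiness is needed. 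Second, in (iv) a $0$-dimensional fibre gives only that $c_g$ is generically \emph{finite} onto its image; birationality (degree one) needs an extra argument, which in \cite{clm1} comes from the deformation theory of the cone over $C$ (projective normality plus the normal-bundle cohomology computations showing the cone is a smooth point of the Hilbert scheme), the same mechanism that gives the irreducibility of the general fibre in (i). Your alternative route to irreducibility via Mukai models is plausible for $6\le g\le 9$ but does not exist for $g\le 5$ (those cases are complete intersections and must be handled by linear algebra on the systems of quadrics and cubics), so as written (i) is also incomplete.
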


The fact that $c_g$ is birational onto its image was later reproved by Mukai \cite{m7, m8}, Arbarello, Bruno and Sernesi \cite{abs1} and  Feyzbakhsh \cite{fe1, fe2, fe3}. This is part of Mukai's beautiful program of reconstructing the K3 surface from a moduli space of sheaves on the hyperplane section. We will not be concerned about these matters here.
 
Returning now to the degeneration method explained after Definition \ref{pri}, it allows also to prove that $H^0(N_C(-2))=0$ for a general canonical curve $C$ of genus $g \ge 7$ that is hyperplane section of a prime K3 surface \cite[Lemma 4]{clm1}. Together with the ``corank one theorem" (Theorem \ref{ck1}), this gives rise to a simple application of Zak-L'vovsky's theorem and Proposition \ref{cano}.

We will use the following.

\begin{defi}
A {\it prime Fano threefold of genus $g$} is a smooth anticanonically embedded threefold $X \subset \PP^{g+1}$ of degree $2g-2$ such that $\Pic(X)$ is generated by the hyperplane bundle. We denote by $\mathcal V_g$ the Hilbert scheme of prime Fano threefolds of genus $g$.
\end{defi}

Then we have the following results, reproving in a quite simple way the important classification results of Fano threefolds \cite{i1, i2, m5, m6}.

\begin{thm} \cite[Thm.~6 and 7]{clm1}

\begin{itemize}
\item[(i)] If $g=11$ or $g \ge 13$, then there is no prime Fano threefold of genus $g$;
\item[(ii)] If $6 \le g \le 10$ or $g=12$, then $\mathcal V_g$ is irreducible and the examples of Fano and Iskovskih fill out all of $\mathcal V_g$.
\end{itemize}
\end{thm}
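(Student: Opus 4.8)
The plan is to reduce both statements to the extendability of the canonical curve obtained by cutting a would-be prime Fano threefold with two hyperplanes, and then to feed this curve into the corank one theorem (Theorem \ref{ck1}) and Zak-L'vovsky's theorem (Theorem \ref{zak}). Suppose for part (i) that a prime Fano threefold $X \subset \PP^{g+1}$ of genus $g$ exists, with $g=11$ or $g \ge 13$. Cutting $X$ with two general hyperplanes I get a surface section $S = X \cap H \subset \PP^g$ and a curve section $C = S \cap H' \subset \PP^{g-1}$. By adjunction $K_S = (K_X+H)|_S = 0$, so $S$ is a K3 surface of degree $2g-2$; a Noether-Lefschetz argument for anticanonical sections of $X$ (using $\Pic X = \ZZ\cdot\O_X(1)$) shows $\Pic S = \ZZ\cdot\O_S(1)$, whence $S \in \H_g$ is a prime K3 surface, and again by adjunction $\O_S(1)|_C \cong \omega_C$, so $C$ is a canonical curve of genus $g$. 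By construction $C$ is a codimension-two linear section of $X$, hence $C$ is $2$-extendable.

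The heart of the matter is the computation of $\alpha(C)$. For $C$ a general hyperplane section of a general prime K3 surface, the corank one theorem (Theorem \ref{ck1}) gives ${\rm cork}\,\Phi_{\omega_C} = 1$, so $\alpha(C) = 1$ by Proposition \ref{cano}; moreover $H^0(N_C(-2)) = 0$ for such a $C$ by the same degeneration of \cite{clm1}. Since $C \subset \PP^{g-1}$ has $r = g-1$ and $\alpha(C) = 1 < g-1$, L'vovsky's version of Theorem \ref{zak} with $k=2$ applies and shows that $C$ is \emph{not} $2$-extendable, a contradiction. Equivalently, one may iterate Proposition \ref{bd} along the chain $C \subset S \subset X$: from $H^0(N_C(-2)) = 0$ and $\alpha(C)=1$ one gets $\alpha(X) \le \alpha(S) - 1 \le \alpha(C) - 2 = -1$, contradicting $\alpha(X) \ge 0$ from Proposition \ref{inf}(iii).

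The main obstacle is precisely the genericity assumption in the previous paragraph: a single threefold produces only a bounded family of K3 sections, and a priori these could all be special members of $\H_g$ along which $\alpha$ jumps. To close this gap I would argue that $\alpha(S) = h^1(T_S(-1))$ is upper semicontinuous on the irreducible scheme $\H_g$ (Definition \ref{pri}, via Proposition \ref{inf}(iv), whose hypotheses hold for projectively normal prime K3 surfaces) and vanishes on a dense open set by the bound $\alpha(S) \le \alpha(C) - 1 = 0$ obtained above for general $C$; since extendability of $S$ forces $\alpha(S) \ge 1$, the extendable prime K3 surfaces form a proper closed subset. A complementary and very concrete mechanism is that $C$, lying on $X$, automatically lies on the whole pencil of prime K3 surfaces $X \cap H_t$ cut by the hyperplanes through the span $\langle H \cap H'\rangle$; this contradicts the uniqueness of the K3 surface through a general such curve, i.e.\ the birationality of $c_g$ for $g=11$ and $g \ge 13$ (\cite[Thm.~5(v)]{clm1}). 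Combining these with the irreducibility of $\H_g$ and a monodromy/dimension count forces the K3 sections of a prime Fano threefold to be general, which is the decisive and most delicate step.

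For part (ii), the existence of prime Fano threefolds for $6 \le g \le 10$ and $g=12$ is provided by the classical constructions of Fano and Iskovskih. To prove that $\mathcal V_g$ is irreducible and exhausted by these examples I would again pass to the K3 section via the forgetful map $X \mapsto S \in \H_g$: for these genera the corank of the Wahl map, and hence the space of extension directions, is understood, so this map has irreducible general fibre and dominant image. Since $\H_g$ is irreducible, this forces $\mathcal V_g$ to be irreducible, and matching dimensions and numerical invariants with the Fano-Iskovskih list identifies this component with the classical one. Here the delicate point is the analysis of the fibres of $X \mapsto S$, namely the reconstruction of the threefold from its K3 section, which is exactly the content of the program alluded to after \cite[Thm.~5]{clm1}.
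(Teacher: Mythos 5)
Your first paragraph reproduces exactly the survey's sketch of part (i): Theorem \ref{ck1} plus Proposition \ref{cano} give $\alpha(C)=1$ for a general hyperplane section $C$ of a general prime K3, \cite[Lemma~4]{clm1} gives $H^0(N_C(-2))=0$, and Theorem \ref{zak} with $k=2$ (or the iteration of Proposition \ref{bd} along $C \subset S \subset X$) rules out $2$-extendability. The gap you then flag is real, but neither of your proposed repairs closes it, and both in fact run the wrong way. Semicontinuity of $\alpha$ on $\H_g$ shows that $\{\alpha \ge 1\}$ is a \emph{closed} subset containing all extendable surfaces; the K3 sections of a hypothetical $X$ are extendable by construction, so they sit tautologically inside this exceptional locus, and no contradiction arises. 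Your decisive claim that semicontinuity, irreducibility of $\H_g$ and ``a monodromy/dimension count'' force the sections of $X$ to be general cannot be right as stated: general members of $\H_g$ have $\alpha=0$ and are non-extendable, while the sections of $X$ are extendable, so they are provably \emph{not} general in the relevant sense. The pencil argument has the same defect: the positive-dimensional fibre of $c_g$ over the curve section $C_0$ of $X$ contradicts \cite[Thm.~5(v)]{clm1} only if $C_0$ is a \emph{general} curve section of a prime K3, which is the very point at issue. The unconditional statements that would rescue this route --- the fibre-dimension formula of Theorem \ref{cds3} and the corank-one statement for \emph{arbitrary} sections, Theorem \ref{cds4} --- are much later results of \cite{cds} requiring $\Cliff(C)\ge 3$ and $g>37$ respectively; they cannot cover $13 \le g \le 37$, and indeed K3 surfaces in that range can admit singular extensions (Theorem \ref{cds5} is sharp, with an extendable example in genus $37$), so no argument that only proves non-extendability of prime K3 surfaces of genus $\ge 13$ can succeed: the smoothness of $X$ must enter somewhere, and it never does in your proposal.

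For part (ii) you also miss the mechanism the paper actually describes. The main idea behind (ii) (stated explicitly in the survey just after the theorem) is that a prime Fano threefold is projectively Cohen--Macaulay, hence by \cite[page~46]{p} it flatly degenerates in the Hilbert scheme to the cone over its own --- possibly special --- K3 section $S$; the knowledge of the cohomology of $N_S$ (namely $h^0(N_S(-1))$ and $H^0(N_S(-2))=0$, obtained from the curve section via Proposition \ref{bd}) shows this cone is a \emph{smooth point} of the Hilbert scheme, and irreducibility of $\mathcal V_g$ then follows because such K3 sections are general in the irreducible $\H_g$, so all Fano threefolds are tied through the irreducible family of cones to the single component containing the Fano--Iskovskih examples. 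It is this cone/Hilbert-scheme analysis, not a reconstruction of $X$ from $S$, that converts the generic Gaussian-map computations into a statement about \emph{every} $X$. Your substitute --- studying the fibres of $X \mapsto S$ and ``matching dimensions and numerical invariants with the Fano--Iskovskih list'' --- is unsupported by anything you cite (the Mukai-program results mentioned in the survey reconstruct the K3 from its curve section, not the threefold from its K3 section), and in any case a dimension match would not exclude several distinct components of $\mathcal V_g$ dominating $\H_g$.
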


Concerning the last statement, the meaning is that just the knowledge of an example for every $g$ allows concluding that they fill out the Hilbert scheme. The main idea behind (ii) (and also behind Theorem \ref{mukvar} below) being that a prime Fano threefold is projectively Cohen-Macaulay, hence (see \cite[page 46]{p}) flatly degenerates to the cone over its hyperplane section $S$. Now knowledge of the cohomology of the normal bundle of $S$ allows proving that this cone is a smooth point of the Hilbert scheme. Then to obtain irreducibility one uses the fact that such K3 surfaces $S$ are general in $\H_g$.

Another important observation in \cite[Table 2]{clm1} is that, if $6 \le g \le 10$ or $g=12$, and $C$ is a general hyperplane section of a general $K3$ surface of genus $g$, then ${\rm cork} \Phi_{\omega_C} > 1$, thus suggesting that the curve might be precisely (${\rm cork} \Phi_{\omega_C}$)-extendable. 

Also this turned out to be true and related to another milestone in the study of Fano varieties: Fano threefolds of index greater than $1$ and Mukai varieties.

We will state, for simplicity of exposition, only the results about Picard rank $1$ and $g \ge 3$. The interested reader can consult \cite{clm2, cd1} for further results.

\begin{defi}
For $r \ge 2$, let $\H_{r,g}$ be the component of the Hilbert scheme whose general elements are obtained by embedding prime K3 surfaces of genus $g$ via the $r$-th multiple of the primitive class.

We denote by $\mathcal V_{n, r, g}$ the Hilbert scheme of smooth nondegenerate varieties $X \subset \PP^N$ of dimension $n \ge 3$, such that $\rho(X)=1$ and whose general surface section is a K3 surface represented by a point in $\H_{r,g}$. For $n \ge 4$, a {\it Mukai variety} is a variety $X$ with $\rho(X)=1$ represented by a point in $\mathcal V_{n, 1, g}$.
\end{defi}

With similar computations of coranks and cohomology of the normal bundle, the following classification results were obtained in \cite{clm2} (note also \cite{clm2c}, even though the classification results are not affected).

The first one is for Fano threefolds of index $r \ge 2$. Again, as the classification was known before, the novelty is (ii).

\begin{thm} \cite[Thm.~3.2]{clm2}
\label{mukvar}

\begin{itemize}
\item[(i)] If $(r,g) \not\in\{(2,3), (2,4), (2,5), (2,6), (3,4), (4,3)\}$, then there is no Fano threefold in $\mathcal V_{3, r, g}$;
\item[(ii)] If $(r,g) \in\{(2,3), (2,4), (2,5), (2,6), (3,4), (4,3)\}$, then $\mathcal V_{3, r, g}$ is irreducible and the examples of Fano and Iskovskih form a dense open subset of smooth points of $\mathcal V_{3, r, g}$.
\end{itemize}
\end{thm}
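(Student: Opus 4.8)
The statement splits into two parts of rather different nature, both approached by passing to linear sections. The plan is to exploit the chain $C \subset S \subset X$, where $X \in \mathcal V_{3,r,g}$ is a putative Fano threefold, $S = X \cap H$ is a general surface section and $C = S \cap H'$ is a general hyperplane section of $S$. By the definition of $\mathcal V_{3,r,g}$ the surface $S$ is a K3 surface represented by a point of $\H_{r,g}$, hence embedded by $rA$, where $A$ is the primitive ample class with $A^2 = 2g-2$; since $\omega_S \cong \O_S$, adjunction gives $\omega_C \cong \O_C(1)$, so that $C$ is a canonical curve of genus $\gamma = 1 + r^2(g-1)$. In this way the whole problem is steered towards the single invariant ${\rm cork}\,\Phi_{\omega_C} = \alpha(C)$ of Proposition \ref{cano}, exactly as in the prime case.

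For the nonexistence statement (i) I would argue by contradiction. If $X$ existed, then $S$ would be a K3 surface, not a quadric, extendable to the threefold $X$, so $\alpha(S) \ge 1$ by the first part of Zak--L'vovsky's theorem (Theorem \ref{zak}). Granting $H^0(N_C(-2)) = 0$, Proposition \ref{bd} applied to the extension $C \subset S$ gives $\alpha(S) \le \alpha(C) - 1$, whence ${\rm cork}\,\Phi_{\omega_C} = \alpha(C) \ge 2$ by Proposition \ref{cano}. The heart of the matter is the reverse inequality: one must show ${\rm cork}\,\Phi_{\omega_C} = 1$ for the general curve section whenever $(r,g)$ lies outside the exceptional list. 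I would prove this following the degeneration philosophy of \cite{clm1} recalled after Definition \ref{pri}, degenerating the K3 surface embedded by $rA$ to a union of rational normal scrolls and then to a union of planes, thereby reducing the computation to the Wahl map corank of an explicit graph curve, which can be evaluated combinatorially; the same degeneration also yields $H^0(N_C(-2)) = 0$. By Zak--L'vovsky's theorem this shows that the general member of $\H_{r,g}$ is not extendable, and since, as in part (ii), every threefold in $\mathcal V_{3,r,g}$ specializes to a cone over a K3 surface filling up $\H_{r,g}$, we conclude $\mathcal V_{3,r,g} = \emptyset$.

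For the classification statement (ii) I would follow the strategy sketched in the survey after the theorem on prime Fano threefolds. Any $X \in \mathcal V_{3,r,g}$ is projectively Cohen--Macaulay, hence it degenerates flatly to the cone $\overline S$ over its surface section $S \in \H_{r,g}$. I would then compute the cohomology of the normal bundle of $S$ (controlling the relevant $H^1$'s, in the same spirit as the corank analysis) in order to prove that $\overline S$ is a smooth point of the Hilbert scheme and to pin down the local dimension there. For each of the six pairs in the list the Fano and Iskovskih examples exist and have K3 surface sections that are general in $\H_{r,g}$; a matching of dimensions shows that they are smooth points lying in the same component as the cones $\overline S$. Since $\H_{r,g}$ is irreducible, the cones over its general members form an irreducible family of smooth points, forcing $\mathcal V_{3,r,g}$ to be irreducible with the Fano--Iskovskih locus as a dense open subset of smooth points.

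The expected main obstacle is specific to each part. In (i) it is the exact evaluation ${\rm cork}\,\Phi_{\omega_C} = 1$: the graph-curve degeneration must be set up flexibly enough to cope with the quadratic growth $\gamma = 1 + r^2(g-1)$ of the genus and to separate the six exceptional pairs, where the corank is at least $2$, from all the others. In (ii) the delicate point is the smoothness of the Hilbert scheme at the cone $\overline S$, that is the vanishing of the pertinent obstruction space, together with the dimension count placing the Fano--Iskovskih examples and the cones in one and the same irreducible component.
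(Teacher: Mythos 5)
Your proposal is correct and follows essentially the same route as the paper (which cites \cite{clm2} and sketches exactly this method): reduction to the canonical curve section, computation of ${\rm cork}\,\Phi_{\omega_C}$ and of $H^0(N_C(-2))$ via degeneration of the $rA$-embedded K3 to unions of scrolls and planes with graph-curve sections, and Zak--L'vovsky plus Propositions \ref{bd} and \ref{cano} for nonexistence. Your part (ii) likewise matches the paper's stated mechanism --- projective Cohen--Macaulayness, flat degeneration to the cone over the K3 section, smoothness of the Hilbert scheme at the cone via normal bundle cohomology, and irreducibility from the generality of the K3 sections in $\H_{r,g}$ --- including the point that this cone argument is what transfers the generic corank-one statement to an arbitrary putative Fano threefold in $\mathcal V_{3,r,g}$.
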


Similarly, for Mukai varieties, classified by Mukai himself \cite{m6}, we have (for the definition of $n(g)$ see \cite[Table 3.14]{clm2}) the following, where again (ii) was new.

\begin{thm} \cite[Thm.~3.15]{clm2}
\label{clm1}

\begin{itemize}
\item[(i)] If $(r,g) \not\in\{(1,s), 6 \le s \le 10, (2,5)\}$ or if $(r,g) \in \{(1,s), 6 \le s \le 10, (2,5)\}$ and $n > n(g)$, then there is no Mukai variety in $\mathcal V_{n, 1, g}$;
\item[(ii)] If $(r,g) \in\{(1,s), 6 \le s \le 10, (2,5)\}$ and $4 \le n \le n(g)$, then $\mathcal V_{n, 1, g}$ is irreducible and the examples of Mukai form a dense open subset of smooth points of $\mathcal V_{n, 1, g}$.
\end{itemize}
\end{thm}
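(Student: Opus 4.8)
The plan is to reduce the statement to the corank-one/higher-corank Gaussian map computations already established, combined with the Zak–L'vovsky extendability bound (Theorem~\ref{zak}) and the degeneration-to-cone technique sketched for Fano threefolds in the previous paragraphs. This is a classification-by-obstruction result: for each admissible pair $(r,g)$ one must produce an example filling an open set of the Hilbert scheme $\mathcal V_{n,1,g}$ and show the component is irreducible, while for inadmissible pairs one must rule out existence entirely. I would treat the two directions separately.

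For the \emph{nonexistence} part (i), I would argue by passing to surface and curve sections. A Mukai variety $X \in \mathcal V_{n,1,g}$ has general surface section a prime K3 surface $S$, whose general hyperplane section $C$ is a canonical curve of genus $g$. By the corank one theorem (Theorem~\ref{ck1}) together with Proposition~\ref{cano}, when $g=11$ or $g \ge 13$ the curve $C$ has surjective or corank-one Wahl map, so $\alpha(C)$ is too small to permit many extensions; iterating Proposition~\ref{bd} (using $H^0(N_C(-2))=0$ for $g \ge 7$, as recorded after the definition of $\H_g$) bounds $\alpha$ from above at each step. Concretely, each increment in dimension $n$ costs at least one in $\alpha$, so the chain of extensions $C \subset S \subset X \subset \cdots$ of length $n-1$ forces $\alpha(C) \ge n-1$. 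Since the corank of the Wahl map equals $\alpha(C)$ and is known explicitly from the corank tables in \cite{clm1, clm2}, this pins down the maximal $n$ for which a Mukai variety can exist, yielding exactly the threshold $n(g)$ and excluding all pairs outside $\{(1,s): 6 \le s \le 10\} \cup \{(2,5)\}$.

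For the \emph{existence and irreducibility} part (ii), I would use the same strategy as for Theorem~\ref{mukvar}: a Mukai variety is projectively Cohen–Macaulay, hence (see \cite[page~46]{p}) flatly degenerates to the cone over its surface section $S$. The key is then a normal bundle computation showing this cone is a smooth point of $\mathcal V_{n,1,g}$; this reduces, via the cohomology of $N_S$ and the identification $\alpha(S) = h^1(T_S(-1))$ from Proposition~\ref{inf}(iv), to the corank and vanishing statements already available for prime K3 surfaces in $\H_g$. Smoothness of the Hilbert scheme at the cone point, combined with the generality of such $S$ in $\H_g$ (which is irreducible), then forces $\mathcal V_{n,1,g}$ to be irreducible with Mukai's examples forming a dense open set of smooth points. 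The dimension count must match $n(g)$ exactly so that no room remains for further extension beyond $n = n(g)$, closing the gap with part (i).

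\emph{The hard part} will be the normal bundle and smoothness calculation for the cone: one must verify that $H^1$ of the relevant twisted normal bundle vanishes and that the tangent space to the Hilbert scheme at the cone point has exactly the expected dimension, so that smoothness holds and the component is unobstructed. This is where the precise value $n(g)$ from \cite[Table~3.14]{clm2} enters, and where the delicate interplay between the corank of the Wahl map of the general curve section and the admissible length of the extension chain must be controlled uniformly across all the exceptional pairs $(r,g)$. I would expect the computation for the boundary case $n = n(g)$ to be the most delicate, since there the extension must exist but be precisely maximal, leaving no slack in the $\alpha$-bound.
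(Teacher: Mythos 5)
Your proposal follows essentially the same route the paper indicates for this result: nonexistence by bounding the length of the smooth extension chain $C \subset S \subset \cdots \subset X$ through iterated use of Proposition~\ref{bd} and Zak--L'vovsky, with $\alpha(C) = {\rm cork}\, \Phi_{\omega_C}$ supplied by the explicit corank computations and the vanishing $H^0(N_C(-2))=0$, and existence/irreducibility via the projectively Cohen--Macaulay degeneration of $X$ to the cone over its K3 surface section, smoothness of the Hilbert scheme at the cone point from the cohomology of the normal bundle of $S$, and the generality of such $S$ in the irreducible $\H_{r,g}$. Only a minor calibration: in part (i) the corank-one theorem (Theorem~\ref{ck1}, for $g=11$ or $g\ge 13$) is not the relevant input for the Mukai range $6 \le g \le 10$ and $(2,5)$ --- there one needs the higher-corank values from the tables of \cite{clm1, clm2}, which you do invoke where it matters.
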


\vskip .4cm

So much for general canonical curves! What about extendability of {\it any} canonical curve?

\vskip .4cm

One way to measure how far a curve is from being general is via its Clifford index, that we now recall.

\begin{defi}
Let $C$ be a smooth irreducible curve of genus $g \ge 4$. The Clifford index of $C$ is
$$\Cliff(C) = \min\{\deg L - 2 r(L), L \ \hbox{a line bundle on} \ C \ \hbox{such that} \ h^0(L) \ge 2, h^1(L) \ge 2\}.$$
\end{defi}

Recall that $\Cliff(C) = 0$ if and only if $C$ is hyperelliptic; $\Cliff(C) = 1$ if and only if  $C$ is trigonal or isomorphic to a plane quintic; $\Cliff(C) = 2$ if and only if $C$ is tetragonal or isomorphic to a plane sextic. A general curve of genus $g$ has Clifford index $\lfloor \frac{g-1}{2} \rfloor$.

Already Beauville and M\'erindol, in their proof of Wahl's theorem, observed that if a smooth irreducible curve $C$ sits on a K3 surface $S$, then the surjectivity of $\Phi_{\omega_C}$ implies the splitting of the normal bundle sequence
$$0 \to T_C \to {T_S}_{|C} \to N_{C/S} \to 0.$$
This introduced the idea, exploited by Voisin in \cite{v}, that the elements of $(\Coker \Phi_{\omega_C})^{\vee}$ should be interpreted as ribbons, or infinitesimal surfaces, embedded in $\PP^g$ and extending a canonical curve $C$. We will return to this later.

In 1996, Wahl proposed a possible converse to his theorem (Theorem \ref{wah}), that, as we will see, is very much connected with Voisin's idea and further developments.

\begin{thm} \cite[Thm.~7.1]{w4}
\label{sq}

Let $C \subset \PP^{g-1}$ be a canonical curve of genus $g \ge 8$ and $\Cliff(C) \ge 3$. Suppose that $C$ satisfies
\begin{equation}
\label{sq1}
H^1(\I^2_{C/\PP^{g-1}}(k))=0 \ \hbox{for every} \ k \ge 3.
\end{equation}
Then $C$ is extendable if and only if $\Phi_{\omega_C}$ is not surjective.
\end{thm}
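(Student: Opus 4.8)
The plan is to prove the two implications separately, the forward one being essentially immediate from the machinery already assembled, the converse being the genuinely hard part.

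First I would dispose of the implication ``$C$ extendable $\Rightarrow \Phi_{\omega_C}$ not surjective'', which uses neither the Clifford-index hypothesis nor the vanishing \eqref{sq1}. A canonical curve of genus $g \ge 8$ is a smooth irreducible nondegenerate subvariety of $\PP^{g-1}$ of codimension $\ge 1$ that is not a quadric, so Zak--L'vovsky's theorem (Theorem \ref{zak}) applies, and it does so for arbitrary, possibly singular, extensions. Hence if $C$ were extendable we could not have $\alpha(C) \le 0$, so $\alpha(C) \ge 1$; by Proposition \ref{cano} we have $\alpha(C) = \mathrm{cork}\,\Phi_{\omega_C}$, whence $\mathrm{cork}\,\Phi_{\omega_C} \ge 1$ and $\Phi_{\omega_C}$ is not surjective. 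This is just Wahl's theorem read through the lens of Proposition \ref{cano}.

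For the converse I would follow the ribbon/deformation viewpoint alluded to before the statement. Since $\Phi_{\omega_C}$ is not surjective, $\Coker\Phi_{\omega_C}\neq 0$, and dualizing one obtains, following Voisin \cite{v} and Beauville--M\'erindol \cite{bm}, a nonzero first-order extension of $C$ inside $\PP^g$: concretely a ribbon on $C$ realizing a nonzero class of $\Coker\Phi_{\omega_C}$, to be thought of as the first infinitesimal neighbourhood of $C$ in a hypothetical surface $S \subset \PP^g$ with $C = S \cap H$. Equivalently, under the identification of extensions of $C$ with smoothings of the projective cone $\widehat C \subset \PP^g$ over $C$ (the same mechanism invoked later for Fano threefolds), the class supplies a nontrivial first-order smoothing of $\widehat C$ in the relevant graded degree of $T^1_{\widehat C}$.

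The hard part, and the reason for the two extra hypotheses, is to integrate this first-order datum to an honest surface. I would set up the graded deformation theory of the homogeneous coordinate ring $R_C = S/I_C$ of $C$ (projectively normal by M. Noether's theorem) and argue order by order, the obstruction at each stage lying in the appropriate graded piece of $T^2_{\widehat C}$. The hypothesis $\Cliff(C) \ge 3$ enters through the syzygies of $C$: by Green-type results on canonical curves it forces $I_C$ to be generated by quadrics with the first syzygies under control, so that $T^1$ and $T^2$ are computable and concentrated in a bounded range of degrees; and the vanishing $H^1(\I^2_{C/\PP^{g-1}}(k)) = 0$ for $k \ge 3$ is precisely what annihilates the obstruction groups in those degrees, so that the first-order smoothing lifts to all orders and then, by a formal-to-algebraic/Hilbert-scheme argument, to an actual surface $S \subset \PP^g$. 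Being a genuine smoothing, the family is not a cone, so $S$ is an extension and $C$ is extendable.

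I expect this integration step to be the main obstacle: producing the ribbon is formal, but proving it unobstructed hinges on the precise interplay between the Clifford-index hypothesis, which governs the shape of the minimal free resolution of $R_C$, and the vanishing \eqref{sq1}, which governs the obstruction space $T^2_{\widehat C}$. This is the technical heart of Wahl's Theorem 7.1 in \cite{w4}, and dropping either hypothesis the argument breaks down.
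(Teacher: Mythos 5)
The survey never proves this statement: it is quoted from Wahl \cite[Thm.~7.1]{w4} without proof, so your attempt can only be measured against Wahl's original argument and its account in \cite{cds}. Your forward implication is complete and correct, and it is exactly the mechanism the survey itself records (in the remark after Theorem \ref{chm}): extendability forces $\alpha(C)\ge 1$ by Theorem \ref{zak} --- which, as stated, allows arbitrary singular extensions --- and $\alpha(C)={\rm cork}\,\Phi_{\omega_C}$ by Proposition \ref{cano}; you are right that neither $\Cliff(C)\ge 3$ nor \eqref{sq1} is needed for this direction.

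For the converse, your outline does follow the actual route of \cite{w4} (generalized in \cite[Thm.~2.1]{cds}, quoted as Theorem \ref{cds1}): dualize $\Coker\Phi_{\omega_C}$ to get a nontrivial ribbon over $C$ in $\PP^g$, then use \eqref{sq1} to integrate it to a genuine surface. But as a proof it has substantive gaps, not just omitted routine detail. First, you never actually establish that nonsurjectivity of $\Phi_{\omega_C}$ yields a nontrivial ribbon extension of $C\subset\PP^{g-1}$; in the survey's own terms this comes from $\alpha(C)={\rm cork}\,\Phi_{\omega_C}>0$ together with the fact that classes in $H^0(N_C(-1))$ modulo the trivial ones classify first-order (ribbon) extensions --- a fact you use implicitly but do not state or prove, and whose canonically-embedded refinement (Bayer--Eisenbud theory) needs $I_C$ generated by quadrics, one concrete place where the Clifford-index hypothesis enters. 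Second, the claim that the order-by-order obstructions ``lie in the appropriate graded piece of $T^2$ of the cone'' and that \eqref{sq1} ``precisely annihilates'' them is asserted, not shown: identifying the obstruction spaces for this specific lifting problem with the groups $H^1(\I^2_{C/\PP^{g-1}}(k))$, $k\ge 3$, is the technical core of Wahl's Section 7, not a formal consequence of general $T^2$ yoga, and your description of the role of $\Cliff(C)\ge 3$ (syzygy control of $N_2$ type) remains an educated guess. Third, you do not verify that the integrated surface is not a cone over $C$, i.e., that the nontriviality of the first-order datum survives the integration --- without this the conclusion ``$C$ is extendable'' in the sense of the paper's definition does not follow. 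In short: your first half is a proof; your second half is an accurate map of where the proof lives, but not the proof itself.
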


Concerning the condition \eqref{sq1}, Wahl proved that it is satisfied by a general canonical curve, while it is not satisfied by a general tetragonal curve of genus $g \ge 8$. He also conjectured that \eqref{sq1} holds for every canonical curve $C$ with $\Cliff(C) \ge 3$. This conjecture was then proved, in almost all cases, in a beautiful paper by Arbarello, Bruno and Sernesi.

\begin{thm} \cite[Thm.~1.3]{abs}
\label{abs}

Let $C \subset \PP^{g-1}$ be a canonical curve of genus $g \ge 11$ and $\Cliff(C) \ge 3$. Then 
$$H^1(\I^2_{C/\PP^{g-1}}(k))=0 \ \hbox{for every} \ k \ge 3.$$
\end{thm}

Besides playing a crucial role in what follows, this theorem, together with Wahl's theorem and Theorem \ref{sq}, gives

\begin{cor} \cite[Cor.~1.4]{abs}
\label{abs2}

Let $C \subset \PP^{g-1}$ be a canonical curve of genus $g \ge 11$ and $\Cliff(C) \ge 3$. Then $C$ is extendable if and only if $\Phi_{\omega_C}$ is not surjective.
\end{cor}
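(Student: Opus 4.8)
The plan is to obtain the corollary as a formal consequence of the three quoted results, so that the only genuine task is to check that the hypotheses of Wahl's converse, Theorem \ref{sq}, are in force. Since $g \ge 11 \ge 8$ and $\Cliff(C) \ge 3$, the standing assumptions of both Theorem \ref{abs} and Theorem \ref{sq} hold; the single gap between Theorem \ref{sq} and the desired statement is the quadratic vanishing condition \eqref{sq1}.

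First I would invoke Theorem \ref{abs}, which under exactly the hypotheses $g \ge 11$ and $\Cliff(C) \ge 3$ asserts that $H^1(\I^2_{C/\PP^{g-1}}(k))=0$ for every $k \ge 3$; that is, \eqref{sq1} holds. Feeding this into Theorem \ref{sq} then yields at once that $C$ is extendable if and only if $\Phi_{\omega_C}$ is not surjective, which is the statement of the corollary. In this sense the argument is pure splicing: Theorem \ref{abs} removes the auxiliary hypothesis of Theorem \ref{sq} in the stated range.

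It is nonetheless worth separating the two implications to locate the real content. The direction ``$C$ extendable $\Rightarrow$ $\Phi_{\omega_C}$ not surjective'' is the easy one and does not require \eqref{sq1}: it is essentially Wahl's theorem (Theorem \ref{wah}), since by Proposition \ref{cano} a smooth surface extension of $C$ is a K3 surface carrying $C$ as a hyperplane section, whence $\Phi_{\omega_C}$ fails to be surjective; more robustly, and without any smoothness assumption, I would argue via Proposition \ref{cano} that $\alpha(C)=\operatorname{cork}\Phi_{\omega_C}$ and invoke Zak--L'vovsky's theorem (Theorem \ref{zak}, applicable since a canonical curve of genus $g \ge 11$ is not a quadric) exactly as in the proof of Theorem \ref{chm}. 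The substantial direction is ``$\Phi_{\omega_C}$ not surjective $\Rightarrow$ $C$ extendable'', which is the heart of Theorem \ref{sq}: a nonzero element of $(\Coker \Phi_{\omega_C})^{\vee}$ is to be read, following Voisin, as a ribbon, i.e. an infinitesimal surface extending $C$, and one must then promote this infinitesimal extension to an honest projective one.

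The hard part, I expect, is entirely absorbed into the two cited theorems rather than into the formal deduction. Conceptually the obstacle is precisely the integration just mentioned: passing from an infinitesimal extension to a genuine variety requires controlling obstructions, and this is exactly what the vanishing \eqref{sq1} governs. That is why Theorem \ref{abs} is indispensable, and why its proof --- a delicate cohomological study of the square of the ideal sheaf that genuinely uses $\Cliff(C) \ge 3$ --- is the true engine behind the corollary.
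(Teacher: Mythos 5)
Your proof is correct and is exactly the paper's argument: the corollary is obtained by splicing Theorem~\ref{abs} (which supplies the vanishing \eqref{sq1} for $g \ge 11$, $\Cliff(C)\ge 3$) into Theorem~\ref{sq}, with Wahl's theorem and the Zak--L'vovsky/Proposition~\ref{cano} route covering the easy direction just as the paper indicates. Your added remarks correctly locate where the real content lies, so nothing is missing.
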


In the same paper Arbarello, Bruno and Sernesi, also proved another important result. We first recall

\begin{defi}
A smooth irreducible curve $C$ is called a {\it Brill–Noether–Petri curve} if the multiplication map $H^0(L) \otimes H^0(K_C-L) \to H^0(K_C)$ is injective for every line bundle $L$ on $C$. 
\end{defi}

Following an idea of Mukai, Voisin suggested in \cite{v} to study the relation between the extendability of Brill–Noether–Petri curves to K3 surfaces and the non-surjectivity of $\Phi_{\omega_C}$. Wahl conjectured in \cite{w4} that for $g \ge 8$ this is an equivalence. While this conjecture turned out to be false \cite{abfs, ab}, it was essentially true and this is the other result by Arbarello, Bruno and Sernesi mentioned above. 

\begin{thm} \cite[Thm.~1.1]{abs}
\label{bn}

Let $C$ be a Brill–Noether–Petri curve of genus $g \ge 12$. Then $C$ lies on a polarized K3 surface, or on a limit thereof, if and only if its Wahl map is not surjective.
\end{thm}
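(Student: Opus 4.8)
\emph{Proof proposal.} The plan is to reduce everything to the equivalence between extendability and non-surjectivity of the Wahl map already recorded in Corollary \ref{abs2}, and then to carry out the geometric identification of the extensions. First I would verify that the hypotheses of Corollary \ref{abs2} and of Theorem \ref{abs} are in force. A Brill–Noether–Petri curve is Petri general, hence Brill–Noether general, so it carries no pencil $g^1_k$ with $2k-g-2<0$; as $(g+2)/2 \ge 7$ for $g \ge 12$, the curve is neither hyperelliptic, trigonal, a plane quintic, tetragonal, nor a plane sextic, and its Clifford index takes the generic value $\lfloor \frac{g-1}{2}\rfloor \ge 5$. In particular $\Cliff(C) \ge 3$, so Corollary \ref{abs2} applies and the theorem is reduced to proving that \emph{$C$ is extendable if and only if $C$ lies on a polarised K3 surface, or on a limit thereof}.

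The implication ``$C$ lies on a K3 surface, or a limit thereof $\Rightarrow$ $C$ is extendable'' is the easy one. If $C$ is a hyperplane section of such a surface $S \subset \PP^g$, which one checks is not a cone over $C$, then $S$ is by definition an extension. (Dually, and more robustly with respect to the cone issue, on the dense locus of smooth K3 surfaces Wahl's Theorem \ref{wah} gives ${\rm cork}\,\Phi_{\omega_C} \ge 1$; since positivity of the corank is a closed condition, it is inherited by curves lying on limits of K3 surfaces, and Corollary \ref{abs2} then yields extendability in the limiting case as well.)

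The substance is the converse: assuming $C$ extendable, I must show the extension is a K3 surface or a limit of such. By Corollary \ref{abs2} there is a surface $S \subset \PP^g$, not a cone, having $C$ as a hyperplane section. I would first compute the dualizing sheaf of $S$ exactly as in the second part of Proposition \ref{cano}: since $C$ is projectively normal (it is canonical with $\Cliff(C)\ge 3$, hence not trigonal nor a plane quintic), the restriction maps $H^0(\O_S(l)) \to H^0(\O_C(l))$ are surjective, forcing $h^1(\O_S)=0$; adjunction along $C \in |\O_S(1)|$ together with $\O_C(1) \cong \omega_C$ gives $\omega_S|_C \cong \O_C$, whence $\omega_S \cong \O_S$. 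Thus $S$ is a Gorenstein surface with trivial dualizing sheaf and the cohomology of a K3 surface, and when $S$ happens to be smooth this is precisely Proposition \ref{cano}, so $S$ is a genuine polarised K3.

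\textbf{The hard part} is the singular case: one must show that such a possibly singular extension $S$ lies in the closure of the family of smooth polarised K3 surfaces of genus $g$, i.e. is a bona fide limit of K3's, and that no exotic extensions occur. Here I would exploit Theorem \ref{abs}: the vanishing $H^1(\I^2_{C/\PP^{g-1}}(k))=0$ for $k \ge 3$ controls the obstructions and, in the spirit of Wahl's Theorem \ref{sq}, organizes the extensions of $C$ into a space whose dimension is measured by ${\rm cork}\,\Phi_{\omega_C}$. One then analyses the singularities of $S$, showing they are at worst the degenerations permitted in a flat limit of K3 surfaces, and produces a one-parameter smoothing whose general fibre is a smooth polarised K3 with $C$ as hyperplane section. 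This deformation-theoretic and specialization argument — ruling out degenerations that are not limits of K3 surfaces and exhibiting $S$ in the boundary of the Hilbert scheme of smooth polarised K3's — is the main obstacle and the technical heart of the statement.
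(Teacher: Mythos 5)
The survey states Theorem \ref{bn} as a quotation of \cite[Thm.~1.1]{abs} and contains no proof of it, so your attempt can only be measured against the argument in that reference. Your preliminary reductions are sound: a Brill--Noether--Petri curve of genus $g \ge 12$ has $\Cliff(C) = \lfloor \frac{g-1}{2} \rfloor \ge 5$, so Corollary \ref{abs2} applies and the theorem becomes ``extendable $\Leftrightarrow$ lies on a K3 or a limit thereof''; the forward implication via Wahl's Theorem \ref{wah} plus semicontinuity of the corank is also fine. But the proposal then stops exactly where the theorem begins: the final paragraph describes the desired conclusion (classify the singularities of the extension, exhibit a smoothing to a polarized K3) without proving any of it, so as a proof it is incomplete at its technical heart, as you yourself concede.

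Worse, the route as sketched cannot be completed, because you never use the Brill--Noether--Petri hypothesis in the hard direction --- only $\Cliff(C) \ge 3$ --- and the statement is false under that weaker hypothesis. A smooth plane curve of degree $d \ge 7$ has genus $\ge 15$, $\Cliff(C) = d-4 \ge 3$ and ${\rm cork}\,\Phi_{\omega_C} = 10$, hence is extendable by Corollary \ref{abs2}; yet by Remark \ref{cansha} and \cite[Thm.~4.5]{w6} all its surface extensions are rational with bad singularities, and for $d$ large a dimension count (the plane-curve locus in $\mathcal M_g$ has dimension exceeding $g+19$, the maximal dimension of the closure of $\Im c_g$) shows a general such curve lies on no limit of K3 surfaces. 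So any argument invoking only the Clifford index would ``prove'' a false statement. Two further soft spots: your computation $\omega_S \cong \O_S$ silently assumes the extension is normal and Gorenstein, whereas extendability per the Definition yields only a variety that is not a cone, and the adjunction argument of Proposition \ref{cano} is carried out for \emph{smooth} $S$; and even granting $\omega_S \cong \O_S$, the extension need not be a K3 with ADE singularities --- the du Val curves of \cite{abfs, ab} lie on rational surfaces with a simple elliptic singularity, which is precisely why the alternative ``or a limit thereof'' appears in the statement. The actual proof in \cite{abs} uses the Petri condition crucially to pin down the singularities of the extension (at worst du Val or simple elliptic, excluding the pathologies of the plane-curve examples) and then shows such surfaces are smoothable to polarized K3 surfaces; that classification-plus-smoothing step is the missing idea.
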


This circle of ideas and results was closed, as of now, by several beautiful theorems proved in \cite{cds}. The paper is full of very interesting results, we will describe only some of them.

The first one is that, conversely to Zak-L'vovsky's theorem, the condition $\alpha(X) \ge k$ is sufficient for the $k$-extendability of most canonical curves and K3 surfaces. This was obtained by a generalization of the method of Voisin and Wahl of integrating ribbons, as follows. Note that the vanishing in Theorem \ref{abs} is fundamental for this method to work.

\begin{thm} \cite[Thm.~2.1]{cds}
\label{cds1}

Let $C$ be a smooth curve of genus $g$ with $\Cliff(C) \ge 3$. Consider the following two statements:
\begin{itemize}
\item[(i)] ${\rm cork} \Phi_{\omega_C} \ge k+1$;
\item[(ii)] There exists an arithmetically Gorenstein normal variety $Y \subset \PP^{g+k}$, not a cone, with $\dim(Y) = k + 2, \omega_Y = \O_Y (-k)$, which has a canonical image of $C$ as a section with a $(g-1)$-dimensional linear subspace of $\PP^{g+k}$ (in particular, $C \subset \PP^{g-1}$ is $(k + 1)$-extendable).
\end{itemize}
If $g \ge11$, then (i) implies (ii). Conversely, if $g \ge 22$ and the canonical image of $C$ is a hyperplane section of a smooth K3 surface in $\PP^g$, then (ii) implies (i).
\end{thm}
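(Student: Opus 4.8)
The plan is to treat the two implications separately, translating everything through the identity ${\rm cork}\,\Phi_{\omega_C} = \alpha(C)$ of Proposition \ref{cano}, so that (i) reads $\alpha(C) \ge k+1$ while (ii) is a geometric incarnation of $(k+1)$-extendability. The converse implication (ii)$\Rightarrow$(i) is then essentially a repackaging of Zak--L'vovsky's theorem, whereas the direct implication (i)$\Rightarrow$(ii) is the substantial part and requires the Voisin--Wahl technique of integrating ribbons, fed by the vanishing of Theorem \ref{abs}.

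For the converse, statement (ii) exhibits $C \subset \PP^{g-1}$ as a $(k+1)$-extendable variety (as the linear section of $Y$). Since $C$ is a canonical curve of genus $g \ge 22$ it is not a quadric, so I would invoke Theorem \ref{zak} read contrapositively: once its hypothesis is checked, $(k+1)$-extendability forces $\alpha(C) \ge k+1$, which is (i). The whole converse thus reduces to verifying Zak's condition $H^0(N_{C/\PP^{g-1}}(-2)) = 0$, and this is exactly where the assumption that $C$ lies on a smooth K3 surface $S \subset \PP^g$ enters. Writing $C = S \cap \PP^{g-1}$ one has $N_{C/S} \cong \omega_C$ and $N_{\PP^{g-1}/\PP^g}|_C \cong \O_C(1)$, so the normal bundle sequences give $H^0(N_{C/\PP^{g-1}}(-2)) \cong H^0(N_{C/\PP^g}(-2)) \hookrightarrow H^0(N_{S/\PP^g}(-2)|_C)$, the extra terms being copies of $H^0(\omega_C^{-1}) = 0$. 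Restricting $N_{S/\PP^g}(-2)$ to the hyperplane section $C$, the problem reduces to the two vanishings $H^0(N_{S/\PP^g}(-2)) = 0$ and $H^1(N_{S/\PP^g}(-3)) = 0$ for the K3 surface $S$; these hold once the genus is large, and this is the point at which the bound $g \ge 22$ is needed.

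For the direct implication I would follow Voisin and Wahl. First, reinterpret the hypothesis: the dual of $\Coker\Phi_{\omega_C}$ classifies ribbon (square-zero, infinitesimal) extensions of the canonical curve, so ${\rm cork}\,\Phi_{\omega_C} \ge k+1$ furnishes a $(k+1)$-dimensional space of first-order extension directions, equivalently a first-order $(k+1)$-extension (a ribbon structure in the $k+1$ new directions). Next I would promote this first-order datum to an honest variety by constructing, order by order in the $k+1$ new linear coordinates, a graded Gorenstein ring deforming the canonical ring of $C$ and cutting out $C$ upon setting the new coordinates to zero; its $\Proj$ is the desired $Y$. The successive obstructions to passing from order $m$ to order $m+1$ lie in graded pieces of $H^1(\I^2_{C/\PP^{g-1}}(j))$ with $j \ge 3$, which vanish by Theorem \ref{abs} — this is precisely why $g \ge 11$ and $\Cliff(C) \ge 3$ are assumed — so the construction never stalls. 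Finally I would record the formal properties: $Y$ is arithmetically Gorenstein and of dimension $k+2$ by construction, $\omega_Y \cong \O_Y(-k)$ by adjunction against $\omega_C \cong \O_C(1)$, and $Y$ is normal and not a cone because $C$ is.

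The main obstacle is the integration step in the direct implication: turning the purely first-order ribbon datum into a genuine projective variety while controlling normality and the Gorenstein property. The crux is that the order-by-order extension must be unobstructed in every degree, and this rests entirely on the vanishing of Theorem \ref{abs}; without control of $H^1(\I^2_{C/\PP^{g-1}}(j))$ the ribbon need not integrate, as the tetragonal examples of Wahl show. Beyond this, one must check that the resulting $Y$ is reduced, irreducible, normal and arithmetically Gorenstein rather than a formal or non-reduced thickening, which I expect to follow from the Gorenstein symmetry of the constructed ring together with the projective normality of the canonical curve. In the converse direction the only comparable difficulty is the K3 vanishing $H^0(N_{S/\PP^g}(-2)) = 0$, which is the reason the sharp hypothesis $g \ge 22$ appears.
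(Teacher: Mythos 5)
A preliminary caveat: this survey does not itself prove Theorem \ref{cds1}; it quotes it from \cite[Thm.~2.1]{cds} and indicates only the method, namely Wahl's technique of integrating ribbons \cite{w4}, made unobstructed by the vanishing of Theorem \ref{abs}. Measured against that, your direct implication is the right reconstruction: the elements of $(\Coker \Phi_{\omega_C})^{\vee}$ are the ribbon directions, the order-by-order obstructions lie in the graded pieces of $H^1(\I^2_{C/\PP^{g-1}}(j))$ for $j \ge 3$, and Theorem \ref{abs} (this is exactly where $g \ge 11$ and $\Cliff(C) \ge 3$ enter) kills them; the substance — the graded Gorenstein ring, its normality, arithmetic Gorensteinness — is Wahl's machinery as developed in \cite{w4} and \cite[\S 5]{cds}, to which you correctly defer. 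One step you wave through does deserve flagging: ``$Y$ is normal and not a cone because $C$ is'' is a non sequitur. Non-coniness of $Y$ is precisely the dividing line between genuine extensions and the trivial cone over $C$, and it must be extracted from the non-splitting of the chosen ribbons, i.e.\ from the nonvanishing and linear independence of the $k+1$ classes in $(\Coker \Phi_{\omega_C})^{\vee}$, not from any property of $C$ alone.

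The genuine gap is in your converse. The skeleton (Proposition \ref{cano} plus Theorem \ref{zak}) is correct, but you elect Zak's hypothesis $H^0(N_{C/\PP^{g-1}}(-2))=0$ and propose to verify it through the two vanishings $H^0(N_{S/\PP^g}(-2))=0$ and $H^1(N_{S/\PP^g}(-3))=0$, asserted to ``hold once the genus is large.'' Neither is proved nor available off the shelf; in particular there is no general vanishing theorem for $H^1$ of negatively twisted normal bundles of K3 surfaces, and genus-largeness heuristics are delicate here (extendable K3 surfaces exist up to $g=37$, cf.\ Theorem \ref{cds5}), so the threshold $22$ has in effect been reverse-engineered into an unproven claim. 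The route actually taken in \cite{cds} — and visible in this survey from the material surrounding Theorems \ref{cds2} and \ref{cds3} — uses the smooth K3 hypothesis quite differently: one has ${\rm cork}\,\Phi_{\omega_C} = h^1(T_S(-1))+1$ (\cite[Cor.~2.8]{cds}), and since the K3 surfaces through the fixed curve $C$ vary in at most the $19$-dimensional moduli of polarized K3 surfaces, the fibre-dimension statement of Theorem \ref{cds3} gives $h^1(T_S(-1)) \le 19$, hence $\alpha(C) = {\rm cork}\,\Phi_{\omega_C} \le 20$. For $g \ge 22$ this yields $\alpha(C) \le 20 < g-1$, which is exactly L'vovsky's condition (i) in Theorem \ref{zak}; the $(k+1)$-extendability furnished by $Y$ then forces $\alpha(C) \ge k+1$, with no normal-bundle vanishing required, and the constant $22 = 20+2$ becomes transparent, whereas your route leaves it mysterious. (A vanishing $h^0(N_{C/\PP^{g-1}}(-2))=0$ for $\Cliff(C) \ge 3$ does in fact hold — see the appendix, where it is attributed to \cite{knutsen18} and the references therein — but it is an independent result, not a consequence of the two K3 statements you posit, and relying on it would anyway prove a different theorem, with no role for the K3 hypothesis or for $g \ge 22$.)
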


As a matter of fact, the extension $Y$ in the theorem above is universal, in the sense that it gives all surface extensions of $C$, see \cite[Cor.~5.5]{cds}. A general criterion for universal extensions of projective varieties was given in \cite[Thm.~A]{w6}. The latter paper contains also nice results about extending a variety to a Calabi-Yau variety.

As for the corank of the Wahl map, the following result is proved by Ciliberto, Dedieu and Sernesi.

\begin{thm} \cite[Cor.~2.10]{cds}
\label{cds4}

Let $C$ be a smooth curve of genus $g > 37$ with $\Cliff(C) \ge 3$. If the canonical model of $C$ is a hyperplane section of a K3 surface $S$, possibly with ADE singularities, then ${\rm cork} \Phi_{\omega_C} = 1$.
\end{thm}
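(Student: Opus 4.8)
The plan is to establish the two inequalities ${\rm cork}\,\Phi_{\omega_C}\ge 1$ and ${\rm cork}\,\Phi_{\omega_C}\le 1$ and combine them. The lower bound is essentially Wahl's theorem. First I would pass to the minimal resolution $\varepsilon\colon\widetilde S\to S$, which is a smooth K3 surface. Since $C$ is a smooth curve and its canonical model is a hyperplane section of $S$, the section $C$ must avoid the ADE singular points of $S$: a hyperplane section through an ADE point of a surface is singular, which would contradict the smoothness of $C$. Hence the strict transform $\widetilde C\subset\widetilde S$ is mapped isomorphically onto $C$ by $\varepsilon$, so $C$ literally sits on the smooth K3 surface $\widetilde S$. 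Wahl's theorem (Theorem \ref{wah}) then gives that $\Phi_{\omega_C}=\Phi_{\omega_{\widetilde C}}$ is not surjective, i.e. ${\rm cork}\,\Phi_{\omega_C}\ge 1$. (Alternatively, $S$ exhibits $C$ as extendable, and one may invoke Corollary \ref{abs2}, whose hypotheses $g\ge 11$ and $\Cliff(C)\ge 3$ hold here.)

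For the upper bound I would argue by contradiction and suppose ${\rm cork}\,\Phi_{\omega_C}\ge 2$. Because $g>37\ge 11$ and $\Cliff(C)\ge 3$, Theorem \ref{cds1} applies with $k=1$: its statement (i) holds, so statement (ii) produces an arithmetically Gorenstein normal threefold $Y\subset\PP^{g+1}$, not a cone, with $\omega_Y=\O_Y(-1)$, having the canonical image of $C$ as a section by a $(g-1)$-dimensional linear subspace. Thus $Y$ is a Gorenstein Fano threefold anticanonically embedded, and since $C=Y\cap\Lambda$ for a linear $\Lambda\cong\PP^{g-1}$, its anticanonical degree equals the degree of the canonical curve $C$, namely
$$(-K_Y)^3=\deg Y=2g-2.$$

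The final step is to turn this into a numerical contradiction. The ADE (hence canonical) singularities of $S$ should propagate to $Y$: by the universality of the extension (the threefold $Y$ carries all surface extensions of $C$ as its hyperplane sections), $S$ is one such section, a general linear section of $Y$ is again a K3 surface with at worst canonical singularities, and by inversion of adjunction $Y$ itself has at worst canonical Gorenstein singularities. One then invokes the sharp bound $(-K_Y)^3\le 72$ for Fano threefolds with canonical Gorenstein singularities (Prokhorov), which forces $2g-2\le 72$, i.e. $g\le 37$, contradicting $g>37$. Hence ${\rm cork}\,\Phi_{\omega_C}\le 1$, and together with the lower bound this gives ${\rm cork}\,\Phi_{\omega_C}=1$. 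The step I expect to be the main obstacle is precisely the control of the singularities of $Y$: Theorem \ref{cds1} only delivers that $Y$ is normal and arithmetically Gorenstein, so the delicate point is to confirm that $Y$ has canonical (not merely Gorenstein) singularities, so that the sharp degree bound $72$ applies and the chosen threshold $g>37$ is exactly the one that excludes such a $Y$. This is where the ADE hypothesis on $S$ and the identification of $S$ with a linear section of the universal extension $Y$ must be used in an essential way.
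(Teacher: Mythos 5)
Your overall architecture is the right one and is essentially the route behind \cite[Cor.~2.10]{cds}: Wahl's theorem (Theorem \ref{wah}) for the lower bound, and for the upper bound the implication (i)$\Rightarrow$(ii) of Theorem \ref{cds1} with $k=1$, reducing a hypothetical ${\rm cork}\,\Phi_{\omega_C}\ge 2$ to the existence of a $3$-dimensional anticanonically embedded arithmetically Gorenstein extension $Y$ of degree $2g-2$, killed by the degree bound $(-K)^3\le 72$ of Prokhorov \cite{p1} --- note that $g>37$ is exactly $2g-2>72$. Your lower-bound argument is correct and complete: a smooth Cartier divisor through a point forces smoothness of the ambient surface there, so $C$ misses the ADE points, lifts isomorphically to the minimal resolution, and Wahl (or Corollary \ref{abs2}, since $S$ is visibly not a cone over $C$) applies.

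The genuine gap is the one you flagged yourself, and your proposed patch does not close it. Inversion of adjunction applied to the Cartier divisor $S\subset Y$ only yields that $Y$ is canonical \emph{in a neighbourhood of} $S$; it says nothing about points of $Y$ in the affine complement of the hyperplane cutting $S$, and ``not a cone'' does not exclude non-canonical Gorenstein points there. Your assertion that ``a general linear section of $Y$ is again a K3 with at worst canonical singularities'' begs the question: the members of the pencil of hyperplane sections of $Y$ through $C$ are merely surfaces with a canonical curve as hyperplane section, and such surfaces can have elliptic, non-ADE singularities (this is exactly the content of Epema's examples \cite{e1, e2}), so genericity gives you nothing here. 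Without canonicity everywhere, Prokhorov's bound does not apply to $Y$. The efficient repair stays inside the toolkit already quoted in this survey: by the universality of the extension $Y$ (\cite[Cor.~5.5]{cds}, mentioned right after Theorem \ref{cds1}), \emph{every} surface extension of $C$ is a linear section of $Y$; in particular $S$ itself is a hyperplane section of $Y$, and since $Y$ is not a cone this exhibits $S$ as $1$-extendable, directly contradicting Theorem \ref{cds5} for $g\ge 38$. The delicate singularity analysis you were attempting is precisely what is carried out once and for all in the proof of that theorem in \cite{cds}, resting on \cite{p1} and Karzhemanov \cite{k, k2}, so you should quote it rather than re-derive it via inversion of adjunction.
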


It is an intriguing open question to find examples of higher corank.

\begin{question} \cite[Question 2.14]{cds}

Does there exist any Brill-Noether general curve of genus $g \ge 11, g \ne 12$, such that ${\rm cork} \Phi_{\omega_C} > 1$?
\end{question} 

As explained in  \cite[Rmk.~2.13]{cds}, all canonical curves in smooth Fano threefolds with Picard number greater than $1$ are Brill-Noether special.

Before stating the next result, let us define

\begin{defi}
The Clifford index $\Cliff(S,L)$ of a polarized K3 surface $(S,L)$ is the Clifford index of any smooth curve $C \in |L|$.
\end{defi}

Note that this does not depend on the choice of $C$ by \cite{gl}. In the definition is assumed that a general curve in $|L|$ is smooth and irreducible, which, being $L$ ample, happens by \cite[Prop. 8.1 and Thm. 3.1]{sd} unless $L = kE+R$ with $k \geq 3, E^2=0, R^2=-2, E.R=1$.

Now if $C$ is a smooth hyperplane section of a K3 surface $S \subset \PP^g$, then $\Cliff(C) = \Cliff(S,L)$, and in \cite[Cor.~2.8]{cds}, as a consequence of the proof of Theorem \ref{cds3} below, the authors prove that  
$${\rm cork} \Phi_{\omega_C} = h^1(T_S(-1)) + 1.$$ 

Then Theorem \ref{cds1} gives

\begin{thm} \cite[Thm.~2.18]{cds}
\label{cds2}

Let $(S, L) \in \mathcal K_g$ be a polarized K3 surface with $\Cliff(S, L) \ge 3$. Consider the following two statements:
\begin{itemize}
\item[(i)] $h^1(T_S \otimes L^{\vee}) \ge k$;
\item[(ii)] There exists an arithmetically Gorenstein normal variety $X \subset \PP^{g+k}$, not a cone, with $\dim(X) = k + 2, \omega_X = \O_X(-k)$, having the image of $S$ by the linear system $|L|$ as a section with a $g$-dimensional linear subspace of $\PP^{g+k}$.
\end{itemize}
If $g \ge11$, then (i) implies (ii). Conversely, if $g \ge 22$, then (ii) implies (i).
\end{thm}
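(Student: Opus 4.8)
The plan is to deduce this statement about K3 surfaces from its canonical-curve counterpart, Theorem \ref{cds1}, by passing to a general curve section and using the corank formula recorded just above. First I would fix a general member $C \in |L|$: since $(S,L) \in \mathcal K_g$ and $\Cliff(S,L) \ge 3$, such a $C$ is smooth and irreducible of genus $g$, and adjunction gives $\omega_C = (\omega_S \otimes L)|_C = L|_C$, so the embedding $S \subset \PP^g$ by $|L|$ restricts on $C$ to the canonical embedding $C \subset \PP^{g-1}$, with $\Cliff(C) = \Cliff(S,L) \ge 3$. The formula of \cite[Cor.~2.8]{cds},
$$
{\rm cork}\,\Phi_{\omega_C} = h^1(T_S \otimes L^{\vee}) + 1
$$
(where $T_S \otimes L^{\vee} = T_S(-1)$ in the given embedding), then shows that condition (i), namely $h^1(T_S \otimes L^{\vee}) \ge k$, is \emph{equivalent} to ${\rm cork}\,\Phi_{\omega_C} \ge k+1$, i.e. to hypothesis (i) of Theorem \ref{cds1} applied to $C$. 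This equivalence is the bridge that transports the whole problem to the curve level, where Theorem \ref{cds1} does the work.

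For the implication (i) $\Rightarrow$ (ii) (with $g \ge 11$), I would feed ${\rm cork}\,\Phi_{\omega_C} \ge k+1$ into Theorem \ref{cds1} to produce the arithmetically Gorenstein, normal, non-conical variety $Y \subset \PP^{g+k}$ of dimension $k+2$ with $\omega_Y = \O_Y(-k)$ having the canonical $C$ as a section by a $(g-1)$-plane. The crucial point is that this $Y$ is the \emph{universal} extension of $C$, so by \cite[Cor.~5.5]{cds} every surface extension of $C$ is realised as a linear section of $Y$ by a $g$-plane. Since the given surface $S$ is exactly such a surface extension, being cut out as $C = S \cap \PP^{g-1}$, it appears as a $g$-plane section of $Y$; setting $X = Y$ then yields (ii), the dimension count $(g+k)-g = k = \dim X - \dim S$ being consistent.

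For the converse (ii) $\Rightarrow$ (i) (with $g \ge 22$), I start from $X \subset \PP^{g+k}$ with $S = X \cap \Lambda$ for a $g$-plane $\Lambda$. Cutting $\Lambda$ further by a general hyperplane produces, via Bertini, a smooth canonical curve $C = S \cap \PP^{g-1}$ realised as a $(g-1)$-plane section of $X$, so that $X$ is now an extension of $C$ fulfilling all the structural requirements of condition (ii) in Theorem \ref{cds1}. Because $g \ge 22$ and the canonical $C$ lies on the smooth K3 surface $S \subset \PP^g$, the converse half of Theorem \ref{cds1} applies and returns ${\rm cork}\,\Phi_{\omega_C} \ge k+1$; the corank formula above then gives $h^1(T_S \otimes L^{\vee}) \ge k$, which is (i). The asymmetry $g \ge 11$ versus $g \ge 22$ is inherited verbatim from Theorem \ref{cds1}.

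The hard part will be the (i) $\Rightarrow$ (ii) direction, precisely the identification of the prescribed $S$ with a linear section of the abstractly constructed universal extension $Y$: one must check that the universality of \cite[Cor.~5.5]{cds} genuinely accounts for this particular $S$ rather than merely some surface extension, and that the arithmetically Gorenstein structure and the ``not a cone'' property of $Y$ persist when $Y$ is reinterpreted as an extension of $S$ instead of $C$. In the converse direction the only delicate points are the Bertini step giving a smooth $C$ and the transfer of the ``not a cone'' hypothesis from the surface picture to the curve picture; both should be routine once the dimension bookkeeping is in place.
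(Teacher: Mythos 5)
Your proposal follows exactly the route the survey indicates: translate condition (i) into ${\rm cork}\,\Phi_{\omega_C} \ge k+1$ for a general $C \in |L|$ via the corank formula ${\rm cork}\,\Phi_{\omega_C} = h^1(T_S \otimes L^{\vee}) + 1$ of \cite[Cor.~2.8]{cds}, then apply Theorem \ref{cds1} in both directions, invoking the universality of the extension (\cite[Cor.~5.5]{cds}) to realize $S$ itself as a $g$-plane section --- which is precisely the paper's own derivation, since the survey offers nothing beyond ``Then Theorem \ref{cds1} gives'' after stating the corank formula. The one caveat you correctly flag is the right point to nail down: when $k < {\rm cork}\,\Phi_{\omega_C} - 1$, the variety $Y$ produced by Theorem \ref{cds1} for that $k$ need not a priori contain $S$, so one should instead cut the \emph{universal} extension by a general linear $\PP^{g+k}$ containing the span of $S$ and check that the arithmetically Gorenstein, normal, non-cone properties survive the linear section.
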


On the other hand, extendability of K3 surfaces, is possible only for bounded genus, using results of Prokhorov \cite{p1} and Karzhemanov \cite{k, k2}:

\begin{thm} \cite[2.9]{cds}
\label{cds5}

Let $S \subset \PP^g$ be a K3 surface possibly with ADE singularities. If $g = 35$ or $g \ge 38$, then $S$ is not extendable.
\end{thm}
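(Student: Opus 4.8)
The plan is to establish non-extendability of a K3 surface $S \subset \PP^g$ by combining the extension–theoretic machinery developed earlier in the paper with an essentially boundedness-type input coming from the birational geometry of Fano varieties. Recall from Theorem \ref{cds2} that, once $g \ge 22$, any arithmetically Gorenstein non-conical extension $X \subset \PP^{g+1}$ of $S$ with $\omega_X = \O_X(-1)$ forces $h^1(T_S \otimes L^{\vee}) \ge 1$, and conversely extensions are governed by this cohomology group. The key geometric observation is that a \emph{smooth} extension $X$ of a K3 surface $S$ in its anticanonical-type embedding is a Fano threefold: since $\O_X(1)_{|S} \cong L$ and $S$ is a K3 surface (so $\omega_S \cong \O_S$), the adjunction/hyperplane-section computation as in the second half of Proposition \ref{cano} shows $\omega_X \cong \O_X(-1)$, i.e. $X$ is Fano with $-K_X$ very ample of the same genus $g$. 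Thus extendability of $S$ in high genus would produce a Fano threefold of arbitrarily large genus.

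First I would reduce the general (possibly ADE-singular, possibly merely arithmetically Gorenstein) extension to the Fano situation: a $1$-extension $X$ of $S$ has $\dim X = 3$, is arithmetically Gorenstein with $\omega_X = \O_X(-1)$, and its general hyperplane section is $S$; smoothing or resolving $X$ appropriately, one lands on a (possibly mildly singular) Fano threefold whose anticanonical genus equals the genus $g$ of $S$. Next I would invoke the explicit genus bounds for such Fano threefolds: by the results of Prokhorov \cite{p1} and Karzhemanov \cite{k, k2}, prime Fano threefolds (and their relevant singular analogues) exist only for a bounded, explicitly determined list of genera, the maximal ones being exactly those excluded in the statement. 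Concretely, no such Fano threefold exists when $g = 35$ or $g \ge 38$, which is precisely the range in which $S$ is asserted to be non-extendable. Feeding this back through the correspondence ``extension of $S$ $\Leftrightarrow$ Fano threefold of genus $g$'' yields the non-extendability.

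The main obstacle, and the step requiring the most care, is the passage between the \emph{intrinsic} extension $X$ furnished by Theorem \ref{cds2} and the \emph{Fano threefold} to which the Prokhorov--Karzhemanov bounds apply. One must ensure that the arithmetically Gorenstein extension, a priori only normal and possibly singular, can be matched to a Fano threefold in the precise singularity class for which the classification genus bounds are proved (terminal Gorenstein, or with at worst the permitted ADE-type singularities), and that the anticanonical genus is preserved under this identification. This is exactly where the hypotheses $\Cliff(S,L) \ge 3$ and $g$ large are used to control the deformation theory and to guarantee $\omega_X = \O_X(-1)$ rather than some other twist. Once the singularity bookkeeping is settled, the theorem follows formally: the nonexistence of the Fano extension in the stated genera is equivalent, via the dictionary above, to the non-extendability of $S$.
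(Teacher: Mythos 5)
Your overall skeleton is the right one, and it is the paper's own: a non-cone extension $Y \subset \PP^{g+1}$ of a K3 surface $S \subset \PP^g$ is an anticanonically embedded Fano threefold of the same genus, and the bounds of Prokhorov \cite{p1} and Karzhemanov \cite{k, k2} for Fano threefolds \emph{with canonical Gorenstein singularities} say exactly that $g \le 37$ and that genus $35$ (degree $68$) does not occur, while $36$ and $37$ do --- matching the excluded values $g=35$ and $g \ge 38$. But your bridge from the extension to the classification has a genuine gap, and the mechanisms you propose to close it would fail. First, ``smoothing $X$'' is impossible in general: as the text surrounding the theorem points out, the genus-$37$ example is a K3 surface that is extendable but \emph{not smoothly} extendable, so an argument that needs to deform the extension to a smooth (or even terminal) Fano threefold cannot work --- if it did, it would wrongly exclude $g=36,37$ as well. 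Second, ``resolving $X$'' destroys exactly what you need: a resolution of a canonical Gorenstein Fano is no longer Fano, and the anticanonical genus is not preserved. The correct step, which is what the proof in \cite{cds} actually does, is to show that the extension $Y$ \emph{itself} lies in the Prokhorov--Karzhemanov class: $Y$ is normal with $\omega_Y \cong \O_Y(-1)$ (via the hyperplane-section sequence, as in the K3 case of Proposition \ref{cano}), and since its general hyperplane section is a K3 with at worst ADE (Du Val) singularities, $Y$ has canonical Gorenstein singularities by the standard ``Du Val general elephant'' adjunction argument. No deformation theory enters.

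Two further points. Your appeal to Theorem \ref{cds2} is a red herring: Theorem \ref{cds5} carries no Clifford-index hypothesis (it holds for \emph{every} K3 with ADE singularities, including hyperelliptic and low-Clifford ones), whereas Theorem \ref{cds2} requires $\Cliff(S,L) \ge 3$ and in its ``(ii) implies (i)'' direction only produces the inequality $h^1(T_S \otimes L^{\vee}) \ge 1$, which contributes nothing toward ruling out an extension; importing it would both restrict the statement and leave the actual work undone. Finally, be careful with ``prime'': the Prokhorov--Karzhemanov bounds are for Fano threefolds with canonical Gorenstein singularities, with no primitivity or Picard-rank assumption --- this matters, since the extension $Y$ has no reason to be prime, and a genus bound valid only for prime Fanos would not suffice.
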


This bound is sharp \cite{z, p1}. Moreover, the example of genus $37$ is a K3 surface that is extendable but not smoothly extendable.

As for the map $c_g$ (see \eqref{pi}), Ciliberto, Dedieu and Sernesi prove that the fibers are smooth over curves with Clifford index at least $3$ and $g \ge 11$.

\begin{thm} \cite[Thm.~2.6]{cds}
\label{cds3}

Let $(S, C) \in \mathcal K \mathcal C_g$ with $g \ge 11$ and $\Cliff(C) \ge 3$. Then
$$\dim \Ker d{c_g}_{(S,C)} = \dim c_g^{-1}(C) = {\rm cork} \Phi_{\omega_C} -1.$$
\end{thm}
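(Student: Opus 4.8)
The plan is to compute the tangent space to the fibre of $c_g$ directly and identify it with $H^1(T_S(-1))$, where I abbreviate $T_S(-1) = T_S \otimes \O_S(-C) = T_S \otimes L^{\vee}$ with $L = \O_S(C)$; the numerical value of this group is then supplied by the relation ${\rm cork}\,\Phi_{\omega_C} = h^1(T_S(-1))+1$ quoted from \cite[Cor.~2.8]{cds}. Since $\mathcal K\mathcal C_g$ parametrizes pairs (K3 surface, smooth divisor in the polarization), its tangent space at $(S,C)$ is $H^1(T_S(-\log C))$, where $T_S(-\log C)$ is the sheaf of vector fields tangent to $C$; a dimension count from $0\to T_S(-\log C)\to T_S\to N_{C/S}\to 0$ using $H^0(T_S)=0$ and the nonvanishing of $H^1(T_S)\to H^1(N_{C/S})=H^1(\omega_C)$ recovers the expected $\dim\mathcal K\mathcal C_g = g+19$.

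First I would write down the restriction sequence of the log tangent sheaf along $C$,
\begin{equation*}
0 \to T_S(-1) \to T_S(-\log C) \to T_C \to 0,
\end{equation*}
whose kernel $T_S(-C)=T_S(-1)$ is verified on the local model $C=\{y=0\}\subset S$, where $T_S(-\log C)=\langle \partial_x,\, y\partial_y\rangle$ restricts onto $T_C=\langle\partial_x\rangle$ with kernel $y\cdot T_S$. Under the above identification, the differential $d(c_g)_{(S,C)}\colon H^1(T_S(-\log C))\to H^1(T_C)$ is exactly the map induced by this sequence, since forgetting $S$ sends the Kodaira--Spencer class of a deformation of the pair to the tangential part of its restriction to $C$. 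Because $g\ge 2$ we have $H^0(T_C)=0$, while $H^0(T_S(-1))\subset H^0(T_S)=0$, so the long exact sequence yields a short exact piece
\begin{equation*}
0 \to H^1(T_S(-1)) \to H^1(T_S(-\log C)) \xrightarrow{d(c_g)_{(S,C)}} H^1(T_C).
\end{equation*}
Hence $\Ker d(c_g)_{(S,C)} \cong H^1(T_S(-1))$, and combining with \cite[Cor.~2.8]{cds} gives $\dim \Ker d(c_g)_{(S,C)} = h^1(T_S(-1)) = {\rm cork}\,\Phi_{\omega_C}-1$.

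It remains to prove that $\dim c_g^{-1}(C)$ equals this tangent-space dimension, i.e.\ that the fibre is smooth at $(S,C)$ of dimension $h^1(T_S(-1))$. The tangent space to $c_g^{-1}(C)$ is precisely $\Ker d(c_g)_{(S,C)}$, so one always has $\dim_{(S,C)} c_g^{-1}(C)\le h^1(T_S(-1))$; the content is the reverse inequality, equivalently the unobstructedness of the deformations of $(S,C)$ keeping $C$ fixed. This is the main obstacle, and it is where the hypotheses $g\ge 11$ and $\Cliff(C)\ge 3$ enter, through the vanishing $H^1(\I^2_{C/\PP^{g-1}}(k))=0$ for $k\ge 3$ of Theorem \ref{abs}. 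Rather than killing the obstruction space $H^2(T_S(-1))$ abstractly, I would produce enough genuine deformations by the ribbon-integration and universal-extension mechanism underlying Theorems \ref{cds1} and \ref{cds2}: that machinery realizes all surface extensions of the canonical curve $C$ --- equivalently all polarized K3 surfaces in the fibre --- as a family whose dimension is controlled by $(\Coker\Phi_{\omega_C})^{\vee}$ and equals ${\rm cork}\,\Phi_{\omega_C}-1$. Matching this lower bound against the tangent-space upper bound forces equality and smoothness, completing the chain $\dim\Ker d(c_g)_{(S,C)} = \dim c_g^{-1}(C) = {\rm cork}\,\Phi_{\omega_C}-1$.
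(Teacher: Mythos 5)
Your skeleton actually coincides with that of the cited proof in \cite[Thm.~2.6]{cds} (the survey itself states the theorem without proof): the tangent-space half --- $T_{(S,C)}\mathcal K \mathcal C_g = H^1(T_S(-\log C))$, the sequence $0 \to T_S(-1) \to T_S(-\log C) \to T_C \to 0$, and the identification $\Ker d{c_g}_{(S,C)} \cong H^1(T_S(-1))$ --- is correct and is how that paper proceeds. The genuine flaw is circularity in the numerical step: you evaluate $h^1(T_S(-1))$ by quoting \cite[Cor.~2.8]{cds}, but, as the survey says explicitly just before Theorem \ref{cds2}, the identity ${\rm cork}\,\Phi_{\omega_C} = h^1(T_S(-1))+1$ is itself \emph{a consequence of the proof} of the very theorem you are proving; in \cite{cds} the two dimensions are computed independently and Cor.~2.8 is what drops out at the end. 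The repair is available in the survey's own toolkit: Proposition \ref{inf}(iv) gives $\alpha(S)=h^1(T_S(-1))$ (here $h^1(\O_S)=0$, $\omega_S=\O_S$, and $S \subset \PP^g$ is linearly normal), Proposition \ref{bd} gives $\alpha(S) \le \alpha(C)-1$ --- applicable because $\Cliff(C)\ge 3$ forces $H^0(N_{C/\PP^{g-1}}(-2))=0$ --- and Proposition \ref{cano} gives $\alpha(C)={\rm cork}\,\Phi_{\omega_C}$; together these yield the upper bound $\dim \Ker d{c_g}_{(S,C)} \le {\rm cork}\,\Phi_{\omega_C}-1$ with no appeal to Cor.~2.8.

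The second soft spot is the lower bound on the fibre, which you delegate wholesale to ``the machinery'': that the family of extensions realized by ribbon integration has dimension exactly ${\rm cork}\,\Phi_{\omega_C}-1$ in moduli is precisely what must be proved, not a quotable fact. What Theorem \ref{cds1} (with $k+1={\rm cork}\,\Phi_{\omega_C}$, using $g \ge 11$, $\Cliff(C)\ge 3$ and the vanishing of Theorem \ref{abs}) actually provides is a universal extension $Y \subset \PP^{g+k}$ with $k={\rm cork}\,\Phi_{\omega_C}-1$; the linear spaces $\PP^g$ containing $\langle C \rangle = \PP^{g-1}$ form a $\PP^k$, and one must then verify (a) that sections near $[S]$ remain smooth K3 surfaces (openness of smoothness in the family of linear sections, $S$ itself being one of them), and (b) that the induced map from this $\PP^k$ to $\mathcal K\mathcal C_g$ has finite fibres, i.e.\ distinct sections give non-isomorphic polarized pairs up to the finite group $\Aut(C)$. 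Moreover, to bound $\dim c_g^{-1}(C)$ from above globally, and not merely at the given point $(S,C)$, one needs that \emph{every} K3 extension of $C$ arises as such a linear section of $Y$, which is \cite[Cor.~5.5]{cds}, recalled in the survey right after Theorem \ref{cds1}. Once (a), (b) and the repaired upper bound are in place, your sandwich ${\rm cork}\,\Phi_{\omega_C}-1 \le \dim c_g^{-1}(C) \le \dim \Ker d{c_g}_{(S,C)} \le {\rm cork}\,\Phi_{\omega_C}-1$ closes, and equality then re-derives Cor.~2.8 in the same logical order as \cite{cds}.
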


A similar theorem \cite[Thm.~2.19]{cds} holds for the analogous map for Fano threefolds and K3 surfaces. See also \cite{cd1, cd2} for the map $c_g$ in the non-primitive case.

Another interesting case on which one can compute the corank of the Wahl map is for curves $C$ lying on a Gorenstein surface $S$ with $h^1(\O_S)=0$. As a matter of fact, it happens often that ${\rm cork} \Phi_{\omega_C} = h^0(-K_S)$ (see \cite{w6}). If, in addition, $K_S+C$ is ample and $C$ is not one of a special list (see loc. cit.), then Wahl gives in \cite[Thm. 4.5]{w6} an explicit construction of the universal extension of $C$ in the canonical embedding. In particular all extensions of $C$ have bad singularities. See Remark \ref{cansha} for the case of plane curves.
 

We end this section by recalling what happens, and what is known, for curves $C$ with $\Cliff(C) \le 2$.

The Wahl map, in these cases, was studied, among others, by Ciliberto-Miranda, Wahl and Brawner. We collect here what is known (see \cite{br1, br3} for more details).

\begin{thm} \cite{cm, w1, w3, br1, br2, br3}
\label{altre}

Let $C$ be a smooth irreducible curve of genus $g$. Then:
\begin{itemize}
\item[(i)] If $C$ is hyperelliptic of genus $g \ge 2$, then ${\rm cork} \Phi_{\omega_C}=3g-2$ (in fact this characterizes hyperelliptic curves);
\item[(ii)] If $C$ is trigonal and $g \ge 4$, then ${\rm cork} \Phi_{\omega_C}=g+5$;
\item[(iii)] If $C$ is a plane curve of degree at least $5$, then ${\rm cork} \Phi_{\omega_C}=10$;
\item[(iv)] If $C$ is a general tetragonal curve and $g \ge 7$, then ${\rm cork} \Phi_{\omega_C}=9$.
\end{itemize}
\end{thm}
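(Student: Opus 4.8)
The plan is to treat the four cases by exploiting, in each, the special linear series that forces the low Clifford index to place the canonical model of $C$ on a distinguished rational surface: the rational normal scroll in the hyperelliptic, trigonal and tetragonal cases, and the plane $\PP^2$ (re-embedded by $\O(d-3)$) for plane curves. Throughout I would use that $\dim \Lambda^2 H^0(\omega_C) = \binom{g}{2}$ and $h^0(\omega_C^3) = 5g-5$, so that the quantity to be computed is $\operatorname{cork}\Phi_{\omega_C} = 5g-5-\operatorname{rank}\Phi_{\omega_C} = \dim \Coker \Phi_{\omega_C}$.

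\textbf{The hyperelliptic case, as a model computation.} Here everything can be made explicit. Writing $y^2=f(t)$ with $\deg f = 2g+2$, a basis of $H^0(\omega_C)$ is $\omega_i = t^i\,dt/y$, $0\le i\le g-1$. Taking $t$ as local coordinate and applying the defining formula $\Phi_{\omega_C}(\omega_i\wedge\omega_j)=(f_if_j'-f_jf_i')\,dt^3$ with $f_i=t^i/y$ gives, after a one-line differentiation,
\begin{equation}
\Phi_{\omega_C}(\omega_i\wedge\omega_j)=(j-i)\,t^{\,i+j-1}\,\frac{dt^3}{y^2}.
\end{equation}
I would then decompose $H^0(\omega_C^3)$ into eigenspaces of the hyperelliptic involution $\iota\colon y\mapsto -y$: the anti-invariant part has basis $t^k\,dt^3/y^3$, $0\le k\le 3g-3$, of dimension $3g-2$, while the invariant part has basis $t^m\,dt^3/y^2$, $0\le m\le 2g-4$, of dimension $2g-3$. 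Since $dt^3/y^2$ is $\iota$-invariant, the image of $\Phi_{\omega_C}$ lies in the invariant part; and as the exponent $m=i+j-1$ runs over all of $0,\dots,2g-4$ with nonzero coefficient $(j-i)$ as $0\le i<j\le g-1$ varies, the image is exactly the invariant part. Hence $\Coker\Phi_{\omega_C}$ is the anti-invariant part and $\operatorname{cork}\Phi_{\omega_C}=3g-2$. For the characterization statement I would add a separate upper bound $\operatorname{cork}\Phi_{\omega_C}\le 3g-2$ valid for all curves, with equality forcing the cokernel to be maximal, which pins down a base-point-free $g^1_2$ and hence hyperellipticity.

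\textbf{The scroll and plane cases.} For $C$ trigonal or general tetragonal I would use the rational normal scroll $S\subset\PP^{g-1}$ on which the canonical curve lies, and for a plane curve the surface $\PP^2$ re-embedded by $\omega_C=\O_C(d-3)$. In each case I would compare $\Phi_{\omega_C}$ with the Gaussian maps of $S$ together with the normal bundle sequence $0\to T_C\to {T_S}_{|C}\to N_{C/S}\to 0$, exactly as in the ribbon/extendability dictionary recalled after Proposition \ref{cano}: a nonzero class in $\Coker\Phi_{\omega_C}$ records a first-order surface extension of $C$. The scroll (and the plane) being highly extendable forces a cokernel of the predicted size once one computes the relevant cohomology of $N_{C/S}$ and of ${T_S}_{|C}$ twisted by $\O_C(-1)$. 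The constancy of the answer for plane curves, $\operatorname{cork}\Phi_{\omega_C}=10$ independently of $d\ge 5$, would come out of a cohomology group on $\PP^2$ that stabilizes in $d$.

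\textbf{Main obstacle.} The genuinely delicate points are the converses and the exact bookkeeping. In the hyperelliptic case the clean computation only yields the value $3g-2$; the characterization ``$\operatorname{cork}=3g-2\Rightarrow$ hyperelliptic'' needs the a priori upper bound together with an analysis of the equality case, where projective normality of the non-hyperelliptic canonical embedding (Max Noether) enters to force extra sections of $\omega_C^3$ into the image. In the tetragonal case the hypothesis of generality is essential precisely to guarantee that $N_{C/S}$ is in general position, so that the connecting maps in the cohomology sequences have maximal rank and no unexpected classes survive in the cokernel; controlling $N_{C/S}$ for a general tetragonal curve, and isolating the $d$-independent contribution in the plane-curve case, is the hard technical core.
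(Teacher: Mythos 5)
This statement is quoted in the survey without proof --- it is a compilation of results from \cite{cm, cm2, w1, w3, br1, br2, br3} --- so your attempt can only be measured against the literature it cites. Your hyperelliptic computation is correct and is exactly the standard one (as in Ciliberto--Miranda and Wahl): the formula $\Phi_{\omega_C}(\omega_i\wedge\omega_j)=(j-i)t^{i+j-1}dt^3/y^2$, the eigenspace decomposition of $H^0(\omega_C^3)$ under the involution into pieces of dimensions $2g-3$ and $3g-2$, and the identification of the image with the full invariant part are all right, and they do give $\operatorname{cork}\Phi_{\omega_C}=3g-2$. The converse characterization in (i), however, is only gestured at: ``equality forces the cokernel to be maximal, which pins down a $g^1_2$'' is not an argument, and the actual proof (in \cite{cm2}) requires showing that non-hyperelliptic curves have strictly larger rank, which is where the real work lies.

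The genuine gap is in your mechanism for (ii)--(iv). You propose that ``a nonzero class in $\Coker\Phi_{\omega_C}$ records a first-order surface extension'' and that ``the scroll (and the plane) being highly extendable forces a cokernel of the predicted size.'' This dictionary between extensions and the cokernel is *not* available in precisely this range of Clifford indices: Theorem \ref{sq} requires $\Cliff(C)\ge 3$ together with the vanishing \eqref{sq1}, and Wahl himself showed \eqref{sq1} fails for a general tetragonal curve of genus $g\ge 8$; moreover $H^0(N_{C/\PP^{g-1}}(-2))\ne 0$ for plane quintics and sextics (Remark \ref{cansha}), so neither version of Zak--L'vovsky's theorem bounds the number of extensions by $\alpha(C)$ --- indeed the appendix exhibits a plane quintic with $\alpha(C)=10$ that is $14$-extendable, and Zak's general claim fails there. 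So extendability of the ambient surface neither bounds the corank from above nor pins it down; your heuristic is backwards exactly where you need it. The correct bridge is the unconditional identity $\operatorname{cork}\Phi_{\omega_C}=\alpha(C)=h^0(N_{C/\PP^{g-1}}(-1))-g$ of Lemma \ref{cork} and Proposition \ref{cano}, after which one must *compute* $h^0(N_C(-1))$ (equivalently the corank directly) from the scroll or Veronese geometry: this is Brawner's explicit scroll computation for trigonal curves \cite{br2} (note that the answer $g+5$ grows with $g$, so no stabilizing cohomology group on the surface can produce it by the soft argument you sketch), his analysis for general tetragonal curves \cite{br3}, where generality enters through the Maroni-type invariants of the scroll rather than through ``general position of $N_{C/S}$,'' and Wahl's exact-sequence comparison of Gaussian maps on $\PP^2$ for plane curves \cite{w3}. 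These computations are the content of the theorem, and your proposal defers all of them.
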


\begin{remark}
\label{cansha}
In \cite[Ex.~9.7]{cds} (see also \cite{w6}), a $10$-extension of the canonical embedding of any smooth plane curve of degree $d \ge 4$ is constructed. Together with Zak-L'vovsky's theorem, this shows that, if $d \ge 7$, then the curve is precisely (${\rm cork} \Phi_{\omega_C}$)-extendable. On the other hand, if $d=5, 6$, then Zak-L'vovsky's theorem does not apply since $H^0(N_{C/\PP^{g-1}}(-2))\ne 0$ (see \cite{cm}) and $\alpha(C) = {\rm cork} \Phi_{\omega_C}=10 > g-1$. 
As it turns out, these curves actually are $(\alpha(C)+1)$-extendable (see \cite[Rmk.~3.7]{cd2} and the appendix).
 
As far as we know, it is not known if trigonal or tetragonal curves are precisely (${\rm cork} \Phi_{\omega_C}$)-extendable.
\end{remark}

\section{Extendability of Enriques surfaces and Enriques-Fano threefolds}

For Enriques surfaces we also have that they are not extendable if the sectional genus is large enough. 
It is an application of Theorem \ref{anysurface}.

\begin{thm} \cite{klm}

Let $S \subset \PP^r$ be an Enriques surface of sectional genus $g \ge 18$. Then $S$ is not extendable.
\end{thm}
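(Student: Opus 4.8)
The plan is to deduce the statement from Zak--L'vovsky's theorem (Theorem \ref{zak}) by proving that $\alpha(S)=0$. Write $L=\O_S(1)$, so that $L^2=2g-2$; since an Enriques surface has $h^1(\O_S)=0$ and numerically trivial canonical bundle, we have $L\cdot K_S=0$, and we may assume $S$ is embedded by the complete linear system $|L|$, so that $r=h^0(L)-1=g-1$. As $S$ is certainly not a quadric, Theorem \ref{zak} reduces the problem to showing $\alpha(S)\le 0$; and since $\alpha(S)\ge 0$ always holds by Proposition \ref{inf}(iii), the goal is the equality $\alpha(S)=0$.

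The first thing to record is why the cheap route through Proposition \ref{inf}(iv) is unavailable. That criterion would give $\alpha(S)=h^1(T_S(-1))$ provided the multiplication map $V\otimes H^0(\omega_S)\to H^0(\omega_S(1))$ is surjective. But on an Enriques surface $K_S$ is a nontrivial $2$-torsion class, so $H^0(\omega_S)=0$, whereas Riemann--Roch together with Kodaira vanishing gives $h^0(\omega_S(1))=h^0(K_S+L)=1+\tfrac12 L^2=g\neq 0$. Hence the multiplication map is not surjective and Proposition \ref{inf}(iv) does not apply. This is precisely the situation for which the surface criterion, Theorem \ref{anysurface}, is designed, since it estimates $\alpha(S)$ directly from linear systems on $S$ rather than from the hyperplane section (whose curve, being a non-canonical curve on an Enriques surface, is awkward to control directly).

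I would therefore apply Theorem \ref{anysurface} to reduce the vanishing $\alpha(S)=0$ to the surjectivity of a suitable Gaussian map (equivalently, to a cohomology vanishing) attached to the polarization $L$. To verify this surjectivity I would use the structure theory of Enriques surfaces: choose an isotropic class $E$ with $E^2=0$ computing the invariant $\phi(L)=\min\{L\cdot E: E^2=0,\ E>0\}$, decompose $L$ along the associated elliptic pencil $|2E|$, and restrict the Gaussian map to the members of the pencil, where surjectivity for line bundles of large degree is standard. The sectional genus bound $g\ge 18$ feeds in here, through the numerical inequalities it forces on $L^2=2g-2\ge 34$ and on $\phi(L)$, guaranteeing that the relevant degrees are large enough.

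The main obstacle, and the source of the precise threshold, is the case analysis over the polarized Enriques surfaces with small $\phi(L)$. Nodal Enriques surfaces carry $(-2)$-curves and special elliptic configurations for which the naive surjectivity can fail, so one must show that for $g\ge 18$ a decomposition of $L$ making all the relevant maps surjective can always be arranged, treating the finitely many exceptional polarization types (including those responsible for the bounded-genus Enriques--Fano threefolds) separately. Once this is settled, $\alpha(S)=0$ follows and Theorem \ref{zak} yields non-extendability.
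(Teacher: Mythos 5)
Your proposal takes essentially the same route as the paper: the survey obtains this theorem exactly as an application of Theorem \ref{anysurface} combined with Zak--L'vovsky's theorem (Theorem \ref{zak}), with the hypotheses of Theorem \ref{anysurface} verified in \cite{klm} by isotropic decompositions of $L$ relative to $\phi(L)$ and a case analysis over the exceptional polarization types, which is precisely the strategy you sketch to conclude $\alpha(S)=0$. Your preliminary observations are also accurate, in particular that Proposition \ref{inf}(iv) is unavailable since $h^0(\omega_S)=0$ while $h^0(\omega_S(1))=g$, and that the bound $g\ge 18$ (i.e.\ $L^2\ge 34$) enters only through the numerical conditions needed for the surjectivity of the relevant Gaussian and multiplication maps.
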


A more precise result for $g = 15$ and $17$ is proved in \cite[Prop.~12.1]{klm}. Actually a complete list of line bundles associated to possibly extendable very ample linear systems on an Enriques surface is available to the authors. See also below for more precise results.
 
As for the extendability of Enriques surfaces, let us define

\begin{defi}
An {\it Enriques-Fano threefold} is an irreducible three-dimensional variety $X \subset \PP^N$ having a hyperplane section $S$ that is a smooth Enriques surface, and such that $X$ is not a cone over $S$. We will say that $X$ has genus $g$ if $g$ is the genus of its general curve section.
\end{defi}

In analogy with Fano threefolds, we obtain a genus bound. The bound was also proved, with completely different methods, by Prokhorov \cite{p2} (see also \cite{k, k3}). Moreover he produced an example of genus $17$. For possible genera, see also \cite{b, sa} for cyclic quotient terminal singularities and \cite[Prop.~13.1]{klm},  \cite{p2} for examples with other singularities. A comprehensive list of all known Enriques-Fano threefolds and their properties can be found in Martello's thesis \cite{ma}.

\begin{thm} \cite[Thm.~1.5]{klm}

Any Enriques-Fano threefold has genus $g \le 17$.
\end{thm}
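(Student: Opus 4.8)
The plan is to recognize an Enriques--Fano threefold as a (possibly singular) non-cone $1$-extension of its Enriques surface hyperplane section, and then to feed this into the non-extendability result for Enriques surfaces of large sectional genus stated immediately above (from \cite{klm}). The whole statement should then fall out as a formal consequence, with no new geometric input required at this level.

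First I would unwind the definitions. By hypothesis $X \subset \PP^N$ is irreducible, three-dimensional, not a cone, and admits a hyperplane section $S = X \cap H$ that is a smooth Enriques surface. After replacing $\PP^N$ by the linear span of $X$ we may assume $X$ nondegenerate, and then $S$ spans $H = \PP^{N-1}$ (a hyperplane section of an irreducible nondegenerate variety of dimension $\ge 2$ is again nondegenerate in the hyperplane). Thus $S \subset \PP^{N-1}$ is an irreducible nondegenerate Enriques surface, and $X$ exhibits it as a $1$-extension in the sense of the opening Definition: $\dim X = \dim S + 1$, the surface $S$ is a linear section of $X$, and $X$ is not a cone over $S$. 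I would emphasize here that the Definition of $k$-extendability imposes \emph{no} smoothness hypothesis on the extension $Y$; hence the (typically unavoidable) singularities of an Enriques--Fano threefold are immaterial for this reduction.

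Next I would match the two genus invariants. A general curve section of $X$ has the form $C = X \cap H \cap H'$ with $H' \subset \PP^N$ a further general hyperplane; since $X \cap H = S$, this is exactly $C = S \cap H'$, a general hyperplane section of the Enriques surface $S$. Because the genus of a general curve section of $X$ is a well-defined invariant (constant over the locus where the intersection is a smooth irreducible curve), it therefore coincides with the sectional genus of $S$. So the genus $g$ of the Enriques--Fano threefold $X$ equals the sectional genus of $S$.

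With this identification the conclusion is immediate: if one had $g \ge 18$, then $S$ would be an Enriques surface of sectional genus $\ge 18$, hence not extendable by the preceding theorem \cite{klm}, contradicting the fact that $X$ is a non-cone $1$-extension of $S$. Therefore $g \le 17$. I do not expect any obstacle in this short reduction itself; the genuine difficulty of the circle of ideas lies entirely in the preceding non-extendability statement for Enriques surfaces, which rests on Theorem \ref{anysurface} and on controlling the relevant cohomological invariant (the corank of the Gaussian map, equivalently $h^1(T_S(-1))$, in the spirit of Proposition \ref{inf}(iv) and Proposition \ref{tx}) for Enriques surfaces of large sectional genus. That is where the numerical threshold forcing precisely $g \le 17$ is produced, and the present theorem is best read as its threefold shadow.
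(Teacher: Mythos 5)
Your proposal is correct and is exactly the derivation the survey intends: the paper presents the genus bound as an immediate consequence of the preceding theorem from \cite{klm} (itself an application of Theorem \ref{anysurface}), by viewing an Enriques--Fano threefold as a non-cone $1$-extension of its smooth Enriques hyperplane section, whose sectional genus equals the genus of the threefold. Your added care about the definition of extendability allowing singular extensions, and about the constancy of the sectional genus, fills in the routine details the survey leaves implicit, so there is nothing to correct.
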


Both the study of extendability of Enriques surfaces and the moduli map analogous to the one on K3 surfaces (see \eqref{pi}), were recently vastly extended in \cite{cdgk}. We will recall only some of the many results contained in that paper, referring the reader to \cite{cdgk} for a more detailed version.

\begin{defi}
Let $S$ be an Enriques surface and let $L$ be a line bundle on $S$ such that $L^2>0$. We set
$$\phi(L) = \min\{E \cdot L : E\in NS(S), E^2 =0, E>0\}.$$
Let $\E_{g,\phi}$ be the moduli space of pairs $(S, L)$ where $S$ is an Enriques surface, $L$ is an ample line bundle with $L^2=2g-2$ and $\phi(L)=\phi$. Let $\E\mathcal C_{g,\phi}$ be the moduli space of triples $(S, L, C)$ where $(S,L) \in \E_{g,\phi}$ and $C \in |L|$ is a smooth irreducible curve. Let $\mathcal R_g$ be the moduli space of Prym curves, that is of pairs $(C, \eta)$ with $C$ a smooth irreducible curve of genus $g$ and $\eta$ a non-zero $2$-torsion element of $\Pic^0(C)$. 
\end{defi}
Note that $\E_{g,\phi}$ has in general many irreducible components. There is a classification in low genus in \cite[\S 2]{cdgk} and more generally in \cite{kn}.

We have the diagram
$$\xymatrix{\E\mathcal C_{g,\phi} \ar[dr]_{c_{g,\phi}} \ar[r]^{\chi_{g,\phi}} & \mathcal R_g \ar[d]^{f_g} \\ & \mathcal M_g}$$
where $\chi_{g,\phi}(S, L, C) = (C, {K_S}_{|C})$, $f_g$ is the degree $2^{2g} - 1$ forgetful covering map and $c_{g,\phi}=f_g \circ \chi_{g,\phi}$.

Then we have

\begin{thm} \cite[Thm.~1, 2, 3]{cdgk}

\begin{itemize}
\item[(i)] If $\phi \ge 3$, then $\chi_{g,\phi}$ is generically injective on any irreducible component of $\E\mathcal C_{g,\phi}$, with the exception of 10 components where the dimension of the general fiber is given in the list in \cite[Thm.~1]{cdgk};
\item[(ii)] $\chi_{g,2}$ is generically finite on all irreducible components of $\E\mathcal C_{g,2}$ when $g \ge 10$. For $g \le 9$ the dimension of a general fiber of $\chi_{g,2}$ on the various irreducible components of $\E\mathcal C_{g,2}$ is given in the list in \cite[Thm.~2]{cdgk};
\item[(iii)] The dimension of a general fiber of $\chi_{g,1}$ and of $c_{g,1}$ is $\max\{10-g, 0\}$. Hence $c_{g,1}$ dominates the hyperelliptic locus if $g \le 10$ and is generically finite if $g \ge 10$.
\end{itemize}
\end{thm}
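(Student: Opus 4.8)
The plan is to reduce all three statements to a single invariant, the quantity $h^1(T_S(-1)) := h^1(T_S \otimes L^{\vee})$ attached to a general polarized Enriques surface $(S,L) \in \E_{g,\phi}$ with $C \in |L|$ and $L = \O_S(C)$, and then to compute it as a function of $g$ and $\phi$. Since $f_g \colon \mathcal R_g \to \mathcal M_g$ is finite, the general fibres of $\chi_{g,\phi}$ and of $c_{g,\phi} = f_g \circ \chi_{g,\phi}$ have the same dimension, so it is enough to study $\chi_{g,\phi}$. From $p_g(S) = q(S) = 0$ and Riemann--Roch one gets $h^0(L) = g$, whence $S \subset \PP^{g-1}$; its general hyperplane section sits in $\PP^{g-2}$ and, writing $\eta := {K_S}_{|C}$ (a nonzero $2$-torsion class, as $K_S$ is), is embedded by $L_{|C} \cong \omega_C \otimes \eta$, that is, Prym-canonically. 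Thus $S$ is an extension of the Prym-canonical curve $(C,\eta)$, which explains why the natural target is $\mathcal R_g$ rather than $\mathcal M_g$.

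Next I would identify the fibre dimension with $h^1(T_S(-1))$, in direct analogy with the K3 statement of Theorem \ref{cds3}. First-order deformations of the pair $(S,C)$ are classified by $H^1(T_S(-\log C))$, and $d\chi_{g,\phi}$ is the map to $H^1(T_C)$ coming from the residue sequence
$$0 \to T_S(-C) \to T_S(-\log C) \to T_C \to 0.$$
As $H^0(T_C) = 0$ for $g \ge 2$, the associated long exact sequence yields $\Ker d\chi_{g,\phi} \cong H^1(T_S(-C)) = H^1(T_S(-1))$. To pass from this tangent-space computation to the actual fibre I would prove that the fibre is smooth of the expected dimension; as in the K3 case this rests on an ABS-type vanishing (the analogue for the Prym-canonical embedding of Theorem \ref{abs}), which kills the relevant obstructions. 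Everything is thereby reduced to computing $h^1(T_S(-1))$, and a vanishing $h^1(T_S(-1)) = 0$ forces the general fibre to be a single reduced point, i.e.\ generic injectivity.

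To compute $h^1(T_S(-1))$ I would exploit the Enriques structure. From $T_S \cong \Omega^1_S \otimes K_S$ and Serre duality, $h^1(T_S(-1)) = h^1(\Omega^1_S \otimes L \otimes K_S)$, a Prym-twisted cohomology of $\Omega^1_S \otimes L$. The method of \cite{cdgk} is to choose an isotropic decomposition of $L$ adapted to $\phi$ --- an elliptic half-pencil $E$ with $E^2 = 0$ and $E \cdot L = \phi$ --- and to filter $\Omega^1_S \otimes L$ using the cotangent sequence of the associated elliptic fibration $S \to \PP^1$, so that the computation reduces to cohomology on $\PP^1$ and along the fibres. The key point is that the positivity of the graded pieces, hence the vanishing of the contributing $H^1$, is controlled precisely by the value of $\phi$.

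The three regimes then follow. For $\phi \ge 3$ the positivity is strong enough to give $h^1(T_S(-1)) = 0$ on a general member of each component, hence generic injectivity; the ten exceptions are exactly the components where the isotropic geometry degenerates, isolated by an explicit study of the linear systems involved. For $\phi = 2$ the same vanishing holds once $g \ge 10$, while for $g \le 9$ the finitely many residual contributions are tabulated. For $\phi = 1$ the half-pencil gives $2E \cdot C = 2$, so $|2E|$ cuts out a $g^1_2$ and $C$ is hyperelliptic; a direct computation with $E$ yields $h^1(T_S(-1)) = \max\{10-g,0\}$, which I would cross-check against the moduli count $\dim \E\mathcal C_{g,1} = 10 + (g-1) = g+9$ versus $\dim(\text{hyperelliptic locus}) = 2g-1$, giving dominance for $g \le 10$ and generic finiteness for $g \ge 10$. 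The main obstacle is this cohomological computation: obtaining sharp vanishing uniformly in $(g,\phi)$ --- singling out the ten exceptional components for $\phi \ge 3$ and the finite list for $\phi = 2$, $g \le 9$ --- demands delicate control of line bundles and of the isotropic lattice geometry on Enriques surfaces. It is genuinely harder than in the K3 case, where a single Gaussian-map corank suffices, because here $H^0(\omega_S) = 0$ and the clean identity $\alpha(S) = h^1(T_S(-1))$ of Proposition \ref{inf}(iv) is no longer available.
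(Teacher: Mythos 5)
First, a point of calibration: the survey offers no proof of this statement --- it is quoted verbatim from \cite{cdgk} --- so your proposal can only be measured against the method of that paper, about which the survey gives one hint: the key step in \cite{cdgk} is ``another interesting method for calculating $h^1(T_S(-1))$''. Your skeleton is consistent with this: reducing from $c_{g,\phi}$ to $\chi_{g,\phi}$ via finiteness of $f_g$, identifying $\Ker d\chi_{g,\phi} \cong H^1(T_S(-C)) = H^1(T_S \otimes L^{\vee})$ through the residue sequence $0 \to T_S(-C) \to T_S(-\log C) \to T_C \to 0$ with $H^0(T_C)=0$, and observing that Proposition \ref{inf}(iv) is unavailable (indeed $H^0(\omega_S)=0$ while $h^0(\omega_S(1))=g>0$, so the multiplication map in its hypothesis cannot be surjective) are all correct and in the spirit of \cite{cdgk}. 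But there are two genuine gaps. The more serious one concerns part (i): generic \emph{injectivity} does not follow from $h^1(T_S \otimes L^{\vee})=0$. Vanishing of the kernel of the differential at a general point yields only that $\chi_{g,\phi}$ is generically finite (unramified) onto its image; the degree of the map onto its image could still exceed $1$, so your claim that the vanishing ``forces the general fibre to be a single reduced point'' conflates the infinitesimal fibre with the set-theoretic one. A separate global argument is required, showing that the general Prym-canonical curve in the image lies on a \emph{unique} polarized Enriques surface --- compare the K3 case, where the corank-one theorem (Theorem \ref{ck1}) alone gives zero-dimensional fibres of $c_g$, while birationality in \cite[Thm.~5(iv)]{clm1} needed further work. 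Your proposal contains nothing playing this role.

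The second gap is that the entire quantitative content of the theorem --- the value of $h^1(T_S \otimes L^{\vee})$ on each irreducible component, i.e.\ the ten exceptional components for $\phi \ge 3$, the $g \le 9$ table for $\phi=2$, and $\max\{10-g,0\}$ for $\phi=1$ --- is promised rather than proved, and the one tool you name would not deliver it. The relative cotangent sequence of the elliptic fibration attached to a single isotropic $E$ with $E \cdot L = \phi$ is problematic as stated (the fibration has two multiple fibres, the half-pencils, and singular fibres, so $\Omega^1_{S/\PP^1}$ is not locally free and the sequence has torsion exactly where the twists by $L$ must be controlled), and, more fundamentally, one pencil cannot see what distinguishes the exceptional components: these are detected by the full isotropic decompositions $L \equiv a_1E_1 + \cdots + a_nE_n$ in the Enriques lattice, with prescribed mutual intersections of several half-pencils, together with the component classification of $\E_{g,\phi}$ (cf.\ \cite{kn}); the computation in \cite{cdgk} then proceeds by restriction sequences to suitable curves combined with Gaussian-map surjectivity results in the style of \cite{kl, klm}, not by a fibrationwise filtration. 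Finally, a minor but telling point: your ABS-type vanishing step is both unavailable (no Prym-canonical analogue of Theorem \ref{abs} is known) and unnecessary --- since everything is over $\mathbb C$, generic smoothness already identifies the dimension of the \emph{general} fibre on each component with $\dim \Ker d\chi_{g,\phi}$ at a general point; such vanishing would be needed only for statements about \emph{every} fibre, as in Theorem \ref{cds3}, which is the wrong template for the statement at hand.
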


This has the following nice consequence

\begin{cor} \cite[Cor.~1.1]{cdgk}

A general curve of genus $2, 3, 4$ and $6$ lies on an Enriques surface, whereas a general curve of genus $5$ or $\ge 7$ does not. A general hyperelliptic curve of genus $g$ lies on an Enriques surface if and only if $g \le 10$.
\end{cor}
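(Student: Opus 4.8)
The plan is to recast the statement as a question about dominance of the forgetful maps $c_{g,\phi}$ and then to extract the answer, genus by genus, from the fibre-dimension computations recorded above. A general curve of genus $g$ lies on an Enriques surface if and only if $c_{g,\phi}$ is dominant onto $\mathcal M_g$ for some admissible invariant $\phi$, so everything reduces to deciding dominance.

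First I would fix the dimensions. The moduli of Enriques surfaces is $10$-dimensional, and for an ample $L$ with $L^2=2g-2$ Riemann--Roch on the Enriques surface gives $h^0(L)=\tfrac{L^2}{2}+\chi(\O_S)=g$, so $\dim|L|=g-1$; hence every component of $\E\mathcal C_{g,\phi}$ has dimension $10+(g-1)=g+9$. Since $f_g$ is finite, a general fibre of $c_{g,\phi}$ has the same dimension $d$ as the corresponding general fibre of $\chi_{g,\phi}$, and the fibre-dimension theorem gives $\dim\operatorname{Im}c_{g,\phi}=(g+9)-d$. As $\operatorname{Im}c_{g,\phi}\subseteq\mathcal M_g$ with $\dim\mathcal M_g=3g-3$, one always has $d\ge 12-2g$, and $c_{g,\phi}$ is dominant on a given component precisely when $d=12-2g$ there. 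This single inequality already disposes of the range $g\ge 7$, where $12-2g<0\le d$: no $c_{g,\phi}$ can be dominant, so a general curve of genus $g\ge 7$ does not lie on an Enriques surface.

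For the affirmative genera $g=2,3,4,6$ I would exhibit, for each, one admissible $\phi$ realising $d=12-2g$. For $g=2$ the only value is $\phi=1$, and part (iii) gives $d=\max\{10-2,0\}=8=12-2\cdot2$, with $c_{2,1}$ dominating the hyperelliptic locus, which is all of $\mathcal M_2$. For $g=6$ one takes $\phi=3$: by part (i) $\chi_{6,3}$ is generically injective on a non-exceptional component, so $d=0=12-2\cdot6$, and since $\dim\E\mathcal C_{6,3}=15=\dim\mathcal M_6$ the map $c_{6,3}$ is dominant. For $g=3$ and $g=4$ the value $\phi=1$ reaches only the hyperelliptic locus (of dimension $2g-1<3g-3$), so one must pass to $\phi=2$ and read off from the list in part (ii) that the general fibre of $\chi_{g,2}$ has dimension $6$ and $4$ respectively, i.e. exactly $12-2g$; equivalently, $\chi_{g,2}$ dominates $\mathcal R_g$. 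This yields dominance of $c_{3,2}$ and $c_{4,2}$.

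The delicate case, and the one I expect to be the main obstacle, is $g=5$, where the crude dimension count is \emph{inconclusive}: $\dim\E\mathcal C_{5,\phi}=14\ge 12=\dim\mathcal M_5$. By the standard bound $\phi(L)^2\le L^2=8$ the only admissible invariants are $\phi\in\{1,2\}$. For $\phi=1$, part (iii) gives $d=\max\{10-5,0\}=5>2=12-2\cdot5$, so $\operatorname{Im}c_{5,1}$ is the $9$-dimensional hyperelliptic locus and is not dominant. The crux is $\phi=2$: one must use the fine fibre analysis of part (ii) to see that the general fibre of $\chi_{5,2}$ has dimension strictly greater than $2$, so that $\operatorname{Im}c_{5,2}$ is a proper subvariety of $\mathcal M_5$; granting this, no admissible $\phi$ yields dominance and a general curve of genus $5$ does not lie on an Enriques surface. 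Finally, for the hyperelliptic assertion I would argue as follows: for $g\le 10$, part (iii) states that $c_{g,1}$ dominates the hyperelliptic locus, so a general hyperelliptic curve lies on an Enriques surface; for $g>10$ the $\phi=1$ families have $\dim\operatorname{Im}c_{g,1}=g+9<2g-1$, too small to cover the $(2g-1)$-dimensional hyperelliptic locus, while families with $\phi\ge 2$ have general member of gonality $2\phi\ge 4$, so a general (gonality-$2$) hyperelliptic curve cannot arise. Hence a general hyperelliptic curve of genus $g$ lies on an Enriques surface if and only if $g\le 10$. The real content throughout is not the dimension bookkeeping but the genus-$5$ borderline, where deciding the dimension of the fibres of $\chi_{5,2}$ — equivalently, which special genus-$5$ curves actually occur on Enriques surfaces with $\phi=2$ — is exactly the hard input supplied by part (ii).
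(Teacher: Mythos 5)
Your proposal is correct and follows essentially the same route as the paper: the survey presents this corollary as a direct consequence of \cite[Thm.~1, 2, 3]{cdgk}, and your bookkeeping --- $\dim \E\mathcal C_{g,\phi}=10+(g-1)=g+9$ on every component, dominance of $c_{g,\phi}$ precisely when the generic fibre dimension equals $12-2g$, with the per-component fibre dimensions (including the decisive genus-$5$, $\phi=2$ case) read off from the cited lists exactly as the survey itself defers to them --- is that derivation. One small remark: your gonality aside in the hyperelliptic claim for $g>10$ is superfluous (and as phrased slightly incomplete, since special members of a $\phi\ge 2$ family could a priori be hyperelliptic), because your own bound $\dim\operatorname{Im}c_{g,\phi}\le g+9<2g-1$ for $g\ge 11$ already rules out every $\phi$ at once.
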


The authors then went on to study the extendability of Enriques surfaces (related to the fiber dimension above), proving

\begin{thm} \cite[Cor.~1.2]{cdgk}

Let $S \subset \PP^r$ be an Enriques surface not containing any smooth rational curve. If $S$ is $1$-extendable, then $(S, \O_S(1))$ belongs to the following list (see \cite{cdgk} for precise definitions)
$$\E_{17,4}^{(IV)^+}, \E_{13,4}^{(II)^+}, \E_{13,3}^{(II)}, \E_{10,3}^{(II)}, \E_{9,4}^+, \E_{9,3}^{(II)}, \E_{7,3}.$$
Furthermore, the members of this list are all at most $1$-extendable, except for members
of $\E_{10,3}^{(II)}$, which are at most 2–extendable, and of $\E_{9,4}^+$, which are at most $3$-extendable.
\end{thm}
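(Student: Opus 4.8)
The plan is to convert both assertions into statements about the single invariant $\alpha(S) = h^0(N_S(-1)) - r - 1$ and then to apply Zak--L'vovsky's theorem (Theorem \ref{zak}) in both directions. An Enriques surface is never a quadric, so the first part of Theorem \ref{zak} shows that if $S$ is $1$-extendable then $\alpha(S) \ge 1$; and the $k$-extendability part shows that $\alpha(S) = m$ forces $S$ to be not $(m+1)$-extendable, provided one of the hypotheses $\alpha(S) < r$ or $H^0(N_S(-2)) = 0$ holds. Thus the list should be precisely the polarization types with $\alpha(S) \ge 1$, while the three refined bounds ``at most $1$-, $2$-, $3$-extendable'' should come from the exact values $\alpha(S) = 1, 2, 3$.

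The heart of the matter is therefore the computation of $\alpha(S)$, and here the clean route through Proposition \ref{inf}(iv) is unavailable. Indeed, for an Enriques surface $h^1(\O_S) = 0$ and $h^2(\O_S) = h^0(\omega_S) = 0$, but $h^2(\O_S(-1)) = h^0(\omega_S \otimes \O_S(1)) = g \ne 0$; so the twisted Euler sequence \eqref{twe} gives $H^1(T_{{\PP^r}_{|S}}(-1)) \cong H^2(\O_S(-1)) \ne 0$, and the multiplication hypothesis of Proposition \ref{inf}(iv) --- which requires $V \otimes H^0(\omega_S) \to H^0(\omega_S \otimes \O_S(1))$ to be surjective --- fails because its source vanishes. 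All that survives from the normal bundle sequence \eqref{norm2} is
$$\alpha(S) = \dim \Ker\bigl(H^1(T_S(-1)) \to H^1(T_{{\PP^r}_{|S}}(-1))\bigr),$$
so $\alpha(S)$ differs from $h^1(T_S(-1))$ by the rank of a connecting map into $H^2(\O_S(-1))$. I would compute $h^1(T_S(-1))$ by the dedicated method of \cite{cdgk} and control this correction term, or, equivalently, invoke the Enriques analog of Theorem \ref{cds3} proved there, which identifies $\alpha(S)$ with the dimension of the general fiber of the Prym map $\chi_{g,\phi}$ over the corresponding component of $\E\mathcal C_{g,\phi}$.

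With this dictionary, both assertions are read off from the preceding theorem (\cite[Thm.~1,2,3]{cdgk}). For $\phi \ge 3$ the map $\chi_{g,\phi}$ is generically injective --- hence $\alpha(S) = 0$ and $S$ is not extendable --- outside the ten exceptional components; for $\phi = 2$ it is generically finite once $g \ge 10$; and for $\phi = 1$ the fiber has dimension $\max\{10 - g, 0\}$, accounting for hyperelliptic sections. The hypotheses that $\O_S(1)$ is very ample and that $S$ contains no smooth rational curve constrain $\phi(L)$ and exclude all but the seven listed components (in particular the $\phi \le 2$ cases, whose sections are hyperelliptic or otherwise special, do not survive). On five of the survivors the general fiber has dimension $1$, so $\alpha(S) = 1$ and the $k=2$ case of Theorem \ref{zak} gives at most $1$-extendability; on $\E_{10,3}^{(II)}$ the fiber has dimension $2$ and on $\E_{9,4}^+$ dimension $3$, yielding the bounds $2$ and $3$ via the $k=3$ and $k=4$ cases.

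The main obstacle is the computation of $\alpha(S)$ for Enriques surfaces: establishing the identification of $\alpha(S)$ with the fiber dimension of $\chi_{g,\phi}$, which rests on the deformation theory of a curve $C$ together with its $2$-torsion class $\eta = {K_S}_{|C}$ moving on Enriques surfaces, and on a careful bookkeeping of the correction term coming from the nonvanishing $H^2(\O_S(-1)) \cong H^0(\omega_S \otimes \O_S(1))^{\vee}$. Secondarily, one must verify in each listed case that hypothesis (i) or (ii) of Theorem \ref{zak} actually holds, so that the upper bounds are legitimate, and one must feed in the classification of the components of $\E_{g,\phi}$ and the behaviour of $\phi(L)$ under the no-smooth-rational-curve hypothesis in order to match the fiber-dimension data to exactly these seven polarization types.
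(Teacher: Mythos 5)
The survey states this theorem without proof, quoting \cite[Cor.~1.2]{cdgk}, so your attempt can only be compared with the route that the surrounding discussion indicates --- and your architecture (reduce everything to one cohomological invariant, then apply Theorem \ref{zak} in both directions) is indeed that route. One correction in your favor: the ``main obstacle'' you flag, namely identifying $\alpha(S)$ with the fiber dimension of $\chi_{g,\phi}$ and bookkeeping the correction term coming from $H^2(\O_S(-1)) \neq 0$, is not actually needed. Every assertion in the theorem is an \emph{upper} bound on extendability: the first half says non-listed pairs are not $1$-extendable, the second says listed ones are at most $1$-, $2$-, $3$-extendable. Your own kernel formula $\alpha(S) = \dim \Ker\bigl(H^1(T_S(-1)) \to H^1(T_{{\PP^r}_{|S}}(-1))\bigr)$ already gives the one-sided estimate $\alpha(S) \le h^1(T_S(-1))$ for free, and together with $\alpha(S) \ge 0$ (Proposition \ref{inf}(iii)) this is all Zak--L'vovsky needs: $h^1(T_S(-1))=0$ forces $\alpha(S)=0$, hence non-extendability (this is exactly the mechanism of Proposition \ref{tx}), while $h^1(T_S(-1)) = m \in \{1,2,3\}$ forces $\alpha(S) \le m < r$, so hypothesis (i) of Theorem \ref{zak} is automatic and gives ``at most $m$-extendable''. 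The rank of the connecting map into $H^2(\O_S(-1))$ would matter only for \emph{lower} bounds on $\alpha(S)$, i.e.\ for producing extensions, which the statement does not claim; likewise your assertion ``$\alpha(S)=1$'' on five components claims an equality that is neither needed nor established.

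The genuine gap is in the other step: you propose to read the values off \cite[Thm.~1,~2,~3]{cdgk}, i.e.\ off the \emph{generic} fiber dimensions of $\chi_{g,\phi}$. Those theorems concern general members of each component, whereas the corollary concerns \emph{every} unnodal $(S,\O_S(1))$; generic injectivity of $\chi_{g,\phi}$ away from the exceptional components does not yield $h^1(T_S(-1))=0$ for an arbitrary unnodal member, nor do generic fiber dimensions bound $h^1(T_S(-1))$ pointwise. What is actually required --- and what constitutes the technical core of \cite{cdgk}, the ``interesting method for calculating $h^1(T_S(-1))$'' alluded to earlier in this survey --- is the computation of $h^1(T_S \otimes L^{\vee})$ for \emph{all} unnodal polarized Enriques surfaces in each component (the hypothesis that $S$ contains no smooth rational curve enters precisely here, and in the classification of components with $L$ very ample, where your use of $\phi \ge 3$ is correct). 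The fiber-dimension theorems are a parallel application of that same computation, not a usable intermediate step. With that substitution, and with the first paragraph's simplification replacing your unproven identification of $\alpha(S)$ with a fiber dimension, your proof closes along essentially the intended lines.
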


See also \cite[Rmk.~6.1]{cdgk} for nodal Enriques surfaces.

\section{Extendability of curves in other embeddings}
 
We will collect a sample of non-extendability results for curves. We will not treat, in this section, the case of canonical curves, as it was the object of a specific section.

Perhaps the first significant one is the non-extendability of elliptic normal curves of degree at least $10$, as followed by Del Pezzo \cite{dp} and Nagata's \cite{na} classification of surfaces of degree $d$ in $\PP^d$. Another important result is Castelnuovo-Enriques's theorem \cite{ca, e}, that proves that for a curve of degree large enough with respect to the genus, the only smooth extensions can be ruled surfaces (see \cite{cr, ha} for a modern proof; see also \cite{hm}). 

There are of course many other results in this direction. Our choice is to mention the ones that are relevant to the approach via Zak-L'vovsky's theorem and Gaussian maps.

One important distinction to be made is between the case of {\it any} curve, and the case of a {\it general} curve, that is, a curve with general moduli.

For a given curve we have the following results, according to the Clifford index of $C$. 

\begin{thm} \cite[Cor.~2.10]{kl}, \cite[Thm.~2]{bel}

Let $C \subset \PP^r$ be a smooth irreducible nondegenerate linearly normal curve of genus $g \ge 4$ and degree $d$. Then $C$ is not extendable if:
\begin{itemize}
\item [(i)] $C$ is trigonal, $g \ge 5$ and $d \ge \max\{4g-6,3g+7\}$;
\item [(ii)] $C$ is a plane quintic and $d \ge 26$;
\item [(iii)] $\Cliff(C) = 2$ and $d \ge 4g-3$;
\item [(iv)] $\Cliff(C) \ge 3$ and $d \ge 4g+1-3\Cliff(C)$.
\end{itemize}
\end{thm}

For curves with general moduli we have

\begin{thm} \cite[Cor.~1.7]{lo}

Let $C \subset \PP^r$ be a curve of genus $g$ with general moduli and degree $d$. Then $C$ is not extendable if:
\begin{itemize}
\item [(i)] $d \ge 2g+15$ for $3 \le g \le 4$;
\item [(ii)] $d \ge 2g+13$ for $5 \le g \le 8$;
\item [(iii)] $d \ge 2g+10$ for $g \ge 9$.
\end{itemize}
\end{thm}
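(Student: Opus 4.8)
The plan is to reduce non-extendability to the surjectivity of a single Gaussian map, and then to establish that surjectivity for the general curve of high degree. I treat the linearly normal case, i.e.\ $C$ embedded by the complete system $|\O_C(1)|$ with $\deg \O_C(1) = d$, which is the substantive one since the general curve of genus $g$ and degree $d$ is of this form (for $d \ge 2g+1$ the bundle $\O_C(1)$ is nonspecial and very ample). Then Lemma \ref{cork}(iii) gives $\alpha(C) = {\rm cork}\, \Phi_{\O_C(1),\omega_C}$. A curve of genus $g \ge 3$ is not a quadric and has codimension $r-1 = d-g-1 \ge 1$, so the first assertion of Zak-L'vovsky's theorem (Theorem \ref{zak}) applies; since $\alpha(C) \ge 0$ by Proposition \ref{inf}(iii), it suffices to prove that $\alpha(C)=0$, that is, that the Gaussian map
$$\Phi_{\O_C(1),\omega_C} \colon \Ker \mu_{\O_C(1),\omega_C} \longrightarrow H^0(\omega_C^2(1))$$
is surjective for the general curve $C$ of genus $g$ endowed with a general line bundle of degree $d$.

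I would prove surjectivity by semicontinuity and degeneration. In a flat family of (possibly singular) curves of genus $g$ carrying line bundles of degree $d \ge 2g+1$, the target $h^0(\omega_C^2(1)) = 3g-3+d$ is locally constant, so the corank of the Gaussian map is upper semicontinuous; as the pairs $(C,L)$ with $C$ of genus $g$ and $L \in \Pic^d(C)$ form an irreducible family, it is enough to exhibit, for each $(g,d)$ in the stated ranges, a single curve $C_0$ with a degree-$d$ line bundle $L_0$, smoothable to a general-moduli curve, for which $\Phi_{L_0,\omega_{C_0}}$ is surjective. A convenient supply of test curves is given by the reducible unions of rational components used in the degeneration method of \cite{clm1}, on which the Gaussian map becomes combinatorially computable.

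The inductive backbone is the comparison of $\Phi_{L,\omega_C}$ with $\Phi_{L(-p),\omega_C}$ for a general point $p$: removing a point fits the two Gaussian maps into exact sequences whose extra terms are controlled by evaluation at $p$ and by multiplication maps that are surjective once $\omega_C^2 \otimes L$ is sufficiently positive, by a Castelnuovo-Mumford regularity estimate (using that $L$ is nonspecial and general). Peeling points this way reduces each case to a base case at the minimal admissible degree, where one computes directly on the chosen degenerate curve or relates $\Phi_{L,\omega_C}$ to the Wahl map $\Phi_{\omega_C,\omega_C}$ of a canonical model.

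The main obstacle is precisely these base cases, and they are the source of the genus-dependence of the thresholds. A bare comparison of source and target dimensions only forces $d \gtrsim g+5$, so dimension alone comes nowhere near implying surjectivity; Gaussian-map surjectivity is notoriously not governed by naive numerics -- for instance the Wahl map is surjective for $g=10$ and $g \ge 12$ yet fails for $g=11$ (Theorem \ref{chm}) -- and one must genuinely produce the surjection on a degenerate curve. For small genus the relevant geometry is more special (for $3 \le g \le 9$ the general canonical curve still lies on a K3 surface and $\Phi_{\omega_C,\omega_C}$ is not surjective), so the base cases can be reached only at a larger degree, which is what degrades the admissible threshold from $2g+10$ for $g \ge 9$ to $2g+13$ and then $2g+15$ for smaller $g$. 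Making these base cases effective, while guaranteeing that the degenerate test curves smooth to curves with general moduli and that the limit of $\O_C(1)$ stays nonspecial, is the delicate heart of the argument.
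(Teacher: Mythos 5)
The frame you chose (Lemma \ref{cork}(iii) plus the first part of Theorem \ref{zak}, reducing everything to surjectivity of $\Phi_{\O_C(1),\omega_C}$) is the correct one, and degeneration is indeed the philosophy behind \cite{lo}; the survey itself gives no proof but cites that paper. However, your reduction breaks the quantifier of the statement. The theorem asserts non-extendability for \emph{every} degree-$d$ embedding of a curve with general moduli: only the abstract curve is general, the line bundle is arbitrary (for $d \ge 2g+1$ and $C$ linearly normal this means the complete system $|L|$ for an \emph{arbitrary} $L \in \Pic^d(C)$). Your semicontinuity argument over the irreducible family of pairs $(C,L)$ produces surjectivity of the Gaussian map only for a \emph{general} pair, i.e.\ for a general line bundle on a general curve --- which is precisely the content of the companion statement quoted immediately afterwards in the survey (also from \cite[Cor.~1.7]{lo}), where the thresholds drop to $d \ge g+14$, $g+12$, $g+9$. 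So even if your plan were carried out in full, it would prove a strictly weaker theorem than the one at hand (and with far from optimal bounds for that weaker theorem). To obtain (i)--(iii) you must prove corank zero \emph{uniformly over the whole of $\Pic^d(C)$} for the general $C$; a single degenerate pair $(C_0,L_0)$ plus semicontinuity cannot deliver this. One either needs an induction on the degree whose base case is itself ``all line bundles of the minimal degree'' (so the base case is not a single computation but a statement over the entire Jacobian of the degenerate curve), or uniform techniques in the spirit of \cite{bel} sharpened by the generic Clifford index --- and this is where the genuine work of \cite{lo} lies.

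Second, all of the effective content is deferred rather than proved: you acknowledge that the base cases, which carry the specific numbers $2g+15$, $2g+13$, $2g+10$, are ``the delicate heart'' and you do not establish them, nor do you state precisely the point-peeling lemma (of the type: if $\Phi_{\omega_C,M}$ is surjective and suitable multiplication maps involving $M$ and $\omega_C$ are surjective, then $\Phi_{\omega_C,M(p)}$ is surjective). The hypotheses of such a lemma are exactly what generate the genus-dependent thresholds, so leaving both the lemma and the base cases unverified means the proposal contains the correct strategy statement but none of the argument that distinguishes, say, $2g+10$ from $2g+13$. In summary: right frame and right general philosophy, but the proof as proposed (a) establishes the wrong quantifier on the line bundle and (b) omits the quantitative core.
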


For general embeddings of curves with general moduli we have

\begin{thm} \cite[Cor.~1.7]{lo}

Let $C \subset \PP^r$ be a general linearly normal degree $d$ embedding a curve of genus $g$ with general moduli. Then $C$ is not extendable if:
\begin{itemize}
\item [(i)] $d \ge g+14$ for $3 \le g \le 4$;
\item [(ii)] $d \ge g+12$ for $5 \le g \le 8$;
\item [(iii)] $d \ge g+9$ for $g \ge 9$.
\end{itemize}
\end{thm}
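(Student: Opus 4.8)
The plan is to reduce non-extendability to the surjectivity of a single Gaussian map, and then to establish that surjectivity for a general pair $(C,L)$ by a degeneration argument. First, since $C$ has positive genus it is never a quadric hypersurface, so the first part of Zak--L'vovsky's theorem (Theorem \ref{zak}) applies and tells us that $C$ is not extendable as soon as $\alpha(C)\le 0$. By Proposition \ref{inf}(iii) we always have $\alpha(C)\ge 0$, and by Lemma \ref{cork}(iii) we have $\alpha(C)={\rm cork}\,\Phi_{\O_C(1),\omega_C}$. Hence the whole statement reduces to showing that, for $C$ general of genus $g$ and $L=\O_C(1)$ a general line bundle of degree $d$ as in (i)--(iii), the Gaussian map $\Phi_{L,\omega_C}\colon \Ker\mu_{L,\omega_C}\to H^0(\omega_C^2\otimes L)$ is surjective, i.e. has zero cokernel.

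Next I would exploit the fact that surjectivity of $\Phi_{L,\omega_C}$ is an open condition on the pair $(C,L)$ in any family: in this degree range the source and target have locally constant dimension (by Riemann--Roch, surjectivity of $\mu_{L,\omega_C}$, and $H^1(\omega_C^2\otimes L)=0$), so ${\rm cork}\,\Phi_{L,\omega_C}$ is upper semicontinuous. It therefore suffices to produce a single pair $(C_0,L_0)$, lying in the closure of the locus parametrizing general-moduli curves equipped with a general line bundle of degree $d$, on which the map surjects. For this I would degenerate $C$ to a suitable reducible nodal curve $C_0=\bigcup_i C_i$---built from a general curve of lower genus with attached elliptic and rational tails---chosen so that the limit $L_0$ of $L$ restricts to a line bundle of large degree on each component and so that the limit of $\omega_C$ is the dualizing sheaf $\omega_{C_0}$. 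The key computational device here is that $\Phi_{L,\omega_C}$ is the map restricting sections to the first infinitesimal neighborhood of the diagonal in $C\times C$, which is exactly what makes a nodal computation tractable.

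On the degenerate curve the Gaussian map is then computed inductively: restriction to the components yields exact sequences relating $\Coker\Phi_{L_0,\omega_{C_0}}$ to the cokernels of the Gaussian maps $\Phi$ on each $C_i$ together with boundary terms supported at the nodes. The base cases---surjectivity of Gaussian maps for high-degree bundles on $\PP^1$ and on elliptic curves---are classical, and the inductive step succeeds provided the various $H^1$ contributions at the nodes vanish. The genus-dependent thresholds $d\ge g+14$, $d\ge g+12$, $d\ge g+9$ in (i)--(iii) are precisely what is needed to force these vanishings and to keep each component bundle positive enough for its own Gaussian map to surject; this is the content of the general surjectivity theorem in \cite{lo} from which the corollary is extracted.

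The main obstacle is exactly this last step: proving surjectivity with bounds this sharp. The difficulty is that the second factor $\omega_C$ is fixed rather than a general line bundle, so one cannot simply invoke generic-pair surjectivity of Gaussian maps; the degeneration must faithfully track how $\omega_C$ specializes to $\omega_{C_0}$, which on each component $C_i$ is $\omega_{C_i}$ twisted up by the points lying over the nodes. Balancing these canonical twists against the restriction of $L_0$, while simultaneously verifying at every inductive step that the numerical bounds suffice and that the chosen $(C_0,L_0)$ genuinely lies in the closure of the general locus (so that semicontinuity transfers the conclusion back to the general pair), is where all the real work resides.
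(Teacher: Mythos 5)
Your reduction is exactly the intended one: the survey states this result without proof, citing \cite[Cor.~1.7]{lo}, and in that paper the corollary is obtained precisely as you describe, by combining Zak--L'vovsky's theorem (Theorem \ref{zak}, applicable since $\alpha(C)\ge 0$ by Proposition \ref{inf}(iii) and $C$ is not a quadric) with the identification $\alpha(C)={\rm cork}\,\Phi_{\O_C(1),\omega_C}$ of Lemma \ref{cork}(iii), together with the main surjectivity theorem for Gaussian maps of \cite{lo}, which is itself proved by semicontinuity and degeneration to reducible nodal curves much as you sketch. Your outline is therefore correct and essentially coincides with the paper's route; the one caveat is that the quantitative heart---the degeneration argument producing the sharp thresholds $g+14$, $g+12$, $g+9$, including the bookkeeping of how $\omega_C$ specializes at the nodes---is asserted rather than carried out, but that is exactly the content of \cite{lo} and lies beyond what the survey itself proves.
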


Another possibility is to study the extendability of curves in terms of general Brill-Noether embeddings.

\begin{thm} \cite[Thm.~1.9]{lo}

Let $d, g, r$ be integers such that $\rho(d,g,r) \ge 0, d<g+r, r \ge 11$ or $r = 9$. If $C$ is a curve with general moduli and $L$ is a general line bundle in $W^r_d(C)$, then $C \subset \PP^r = \PP H^0(L)$ is not extendable.
\end{thm}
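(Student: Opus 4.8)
The plan is to deduce non-extendability from Zak--L'vovsky's theorem (Theorem \ref{zak}), for which it suffices to prove that $\alpha(C)=0$. First I would check the hypotheses: $C$ is smooth, irreducible and, being embedded by the complete linear system $\PP H^0(L)$, linearly normal and nondegenerate; under the stated conditions $L$ is very ample, so that $C\subset\PP^r$ is genuinely an embedding of codimension at least $1$. Moreover $\rho(d,g,r)\ge 0$ together with $d<g+r$ (which forces the special index $s:=g-d+r\ge 1$) gives $g\ge (r+1)s\ge r+1\ge 10$, so $C$ has positive genus and is not a quadric. Since $\alpha(C)\ge 0$ by Proposition \ref{inf}(iii), only the upper bound $\alpha(C)\le 0$ is needed, and then Theorem \ref{zak} with $k=1$ yields the conclusion.

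To access $\alpha(C)$ I would invoke Lemma \ref{cork}(iii), valid precisely because $C$ is linearly normal of positive genus, which gives $\alpha(C)={\rm cork}\,\Phi_{L,\omega_C}$, where $L=\O_C(1)$ and
$$\Phi_{L,\omega_C}\colon \Ker\big(H^0(L)\otimes H^0(\omega_C)\to H^0(\omega_C\otimes L)\big)\to H^0(\omega_C^2\otimes L).$$
Thus the whole problem reduces to proving that this Gaussian map is \emph{surjective} for $C$ with general moduli and $L$ general in $W^r_d(C)$. A preliminary count (both $\omega_C\otimes L$ and $\omega_C^2\otimes L$ are nonspecial of degree $>2g-2$, and the multiplication map is surjective for general $(C,L)$) shows the source has dimension $rg-d+1$ and the target $3g-3+d$; one then verifies that the hypotheses $\rho\ge 0$ and $d<g+r$ make $rg-d+1\ge 3g-3+d$, so surjectivity is numerically possible. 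This is reassuring but clearly not sufficient.

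The surjectivity itself I would establish by \emph{semicontinuity plus degeneration}. Surjectivity of a morphism of vector bundles is an open condition, so working in a suitable family of pairs $(C,L)$ — over an open subset of the universal Brill--Noether locus, whose expected behaviour over a general curve is guaranteed by the theory of Gieseker and Eisenbud--Harris — it is enough to exhibit a \emph{single} limit pair $(C_0,L_0)$, with $\deg L_0=d$, $h^0(L_0)=r+1$, for which $\Phi_{L_0,\omega_{C_0}}$ is surjective (or for which a combinatorial computation of the Gaussian map on a stable limit forces surjectivity after smoothing). The natural candidates for $C_0$ are the degenerate models on which Gaussian and Wahl maps are combinatorially accessible — unions of rational curves, graph or binary curves, or curves on a rational surface. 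The numerical hypothesis ``$r\ge 11$ or $r=9$'' is exactly the shadow, in this more positive embedding, of the surjectivity of the Wahl map for a general curve of genus $10$ or $\ge 12$ (Theorem \ref{chm}): in the canonical case $L=\omega_C$ one has $r=g-1$, so $r\ge 11$ corresponds to $g\ge 12$ and $r=9$ to $g=10$, while the excluded values $r=10$ (i.e.\ $g=11$) and $r\le 8$ are precisely the genera where the Wahl map fails to be surjective.

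The hard part will be the base case. One must construct the degenerate pair $(C_0,L_0)$ so that simultaneously (a) it carries a line bundle with the correct invariants $\deg=d$, $h^0=r+1$ deforming to a general element of $W^r_d$ on a general curve; (b) the Gaussian map $\Phi_{L_0,\omega_{C_0}}$ (or its limit) can be shown surjective by an explicit computation; and (c) the smoothing remains in the component of interest. Controlling all three at once — in particular reconciling the Brill--Noether generality of $L$ with a degeneration tractable enough to evaluate $\Phi$ — is where the genuine work lies, and it is exactly what ties the admissible range of $r$ to the known surjectivity pattern of the Wahl map.
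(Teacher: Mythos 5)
Your reduction is exactly right, and it is the mechanism this survey itself sets up: linear normality and $g\ge (r+1)(g-d+r)\ge r+1\ge 10$ legitimize Lemma \ref{cork}(iii), so $\alpha(C)={\rm cork}\,\Phi_{\O_C(1),\omega_C}$, and surjectivity of the Gaussian map plus Theorem \ref{zak} (the $\alpha\le 0$ case, which needs neither hypothesis (i) nor (ii)) gives non-extendability. Your dimension counts for source ($rg-d+1$) and target ($3g-3+d$) are correct, and your explanation of the hypothesis on $r$ is more than a heuristic in one case: for $r=10$ the datum $(d,g,r)=(2g-2,11,10)$ satisfies $\rho=0$ and $d<g+r$, and since $W^{g-1}_{2g-2}(C)=\{\omega_C\}$, the ``general'' $L$ is forced to be canonical; by Theorem \ref{mukai}(vi) a general genus-$11$ canonical curve is extendable, so $r=10$ genuinely must be excluded, and likewise $r\le 8$ contains the extendable canonical curves of genus $\le 9$.

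The gap is that everything after the reduction is a plan rather than a proof. The statement you must establish --- surjectivity of $\Phi_{\O_C(1),\omega_C}$ for $C$ with general moduli and $L$ general in $W^r_d(C)$ in the stated range --- is the entire content of the cited paper \cite{lo}; the survey offers no proof, and your proposal defers precisely this point to an unconstructed base case. Semicontinuity is the easy half (and even it needs care: over a family including nodal curves one must give a sheaf-theoretic definition of the Gaussian map, keep $h^0$ of the relevant bundles constant so that kernel and target form bundles, and track the limit of $W^r_d$ via limit linear series). The hard half is your items (a)--(c): no candidate pair $(C_0,L_0)$ is produced, no computation of the limit Gaussian map is indicated, and it is not explained how Brill--Noether generality of the limit bundle is reconciled with a degeneration degenerate enough to compute on --- which, as you yourself observe, is where the genuine work lies. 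In \cite{lo} this is carried out by explicit degenerations building on the graph-curve technology behind Theorem \ref{chm}, and the admissible range of $r$ emerges from those computations, not merely from the canonical-case analogy. As it stands, then, your argument is a correct skeleton with the theorem's actual core left unproved.
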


\section{Extendability of other surfaces}
\label{extsu}

In this section we will outline some results about the extendability of surfaces that are neither $K3$ nor Enriques surfaces.

Most results that we will mention are applications of the following

\begin{thm} \cite[Thm.~1.1]{klm}
\label{anysurface}

Let $S \subset \PP^r$ be a smooth irreducible linearly normal surface and let $H$ be its
hyperplane bundle. Assume there is a base-point free and big line bundle $D_0$ on $S$
with $H^1(H-D_0)=0$ and such that the general element $D \in |D_0|$ is not rational
and satisfies:
\begin{itemize}
\item[(i)] the Gaussian maps $\Phi_{H_D, \omega_{D}(D_0)}$ is surjective;
\item[(ii)] the multiplication maps $\mu_{V_D, \omega_D}$ and $\mu_{V_D, \omega_{D}(D_0)}$ are
surjective 
\end{itemize}
where
$V_D := \Im \{H^0(S, H-D_0) \to H^0(D, (H-D_0)_{|D})\}$.
Then
\[ \alpha(S) \le {\rm cork} \Phi_{H_D, \omega_D}. \]
\end{thm}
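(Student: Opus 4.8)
The plan is to bound $\alpha(S)=h^0(N_S(-1))-r-1$ by restricting the global sections of $N_S(-1)$ to a general curve $D\in|D_0|$ and then converting the resulting curve-level quantity into ${\rm cork}\,\Phi_{H_D,\omega_D}$ by means of Lemma \ref{cork}. The starting point is that the hypothesis $H^1(H-D_0)=0$, fed into the sequence $0\to\O_S(H-D_0)\to\O_S(H)\to\O_D(H_D)\to 0$, makes $H^0(S,H)\to H^0(D,H_D)$ surjective; since $S$ is linearly normal this says that $D$ is linearly normal and spans a linear subspace $\Lambda=\PP^{r'}\subset\PP^r$ with $r'=r-h^0(H-D_0)$. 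As $D$ is not rational it has positive genus, so Lemma \ref{cork}(iii) applies to $D\subset\Lambda$ and gives $h^0(N_{D/\Lambda}(-1))=r'+1+{\rm cork}\,\Phi_{H_D,\omega_D}$, the intrinsic Gaussian map being independent of the embedding.

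Next I would compare $N_S(-1)$ with its restriction to $D$ and with the normal bundle of $D$ in $\PP^r$. Restricting on $S$ gives $0\to N_S(-H-D_0)\to N_S(-1)\to N_S(-1)|_D\to 0$, whence $h^0(N_S(-1))\le h^0(N_S(-1)|_D)$ as soon as one knows the vanishing $H^0(N_S(-H-D_0))=0$. On the curve, adjunction yields $N_{D/S}\cong\O_D(D_0)$, hence $0\to\O_D(D_0)\to N_{D/\PP^r}\to N_S|_D\to 0$; twisting by $-1$ and passing to cohomology bounds $h^0(N_S(-1)|_D)$ by $h^0(N_{D/\PP^r}(-1))$ precisely when the restriction $H^0(N_{D/\PP^r}(-1))\to H^0(N_S(-1)|_D)$ is surjective, i.e. the connecting map into $H^1(\O_D(D_0-H))$ vanishes. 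Finally the $r-r'$ degenerate directions contribute $\O_D(1)$-summands to $N_{D/\PP^r}$, so from $0\to N_{D/\Lambda}(-1)\to N_{D/\PP^r}(-1)\to\O_D^{\oplus(r-r')}\to 0$ one gets $h^0(N_{D/\PP^r}(-1))\le h^0(N_{D/\Lambda}(-1))+(r-r')$. Chaining everything,
$$h^0(N_S(-1))\le h^0(N_S(-1)|_D)\le h^0(N_{D/\PP^r}(-1))\le h^0(N_{D/\Lambda}(-1))+(r-r')=r+1+{\rm cork}\,\Phi_{H_D,\omega_D},$$
which is the desired inequality $\alpha(S)\le{\rm cork}\,\Phi_{H_D,\omega_D}$.

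The technical heart, and the step I expect to be the main obstacle, is establishing the two facts used above: the vanishing $H^0(N_S(-H-D_0))=0$ and the surjectivity of $H^0(N_{D/\PP^r}(-1))\to H^0(N_S(-1)|_D)$ (equivalently, the death of the relevant connecting homomorphisms). This is exactly where the remaining hypotheses are tailored to enter. I would restrict the twisted Euler sequence on $S$ together with the two normal-bundle sequences (of $S$ in $\PP^r$ and of $D$ in $S$) to $D$, thereby reducing both statements to $D_0$-twisted cohomology on the curve $D$. Here the subspace $V_D=\Im\{H^0(H-D_0)\to H^0((H-D_0)|_D)\}$ records the hyperplanes through $\Lambda$, i.e. the degenerate directions, so that surjectivity of $\mu_{V_D,\omega_D}$ and $\mu_{V_D,\omega_D(D_0)}$ controls the cohomology they contribute, while surjectivity of the twisted Gaussian map $\Phi_{H_D,\omega_D(D_0)}$ — the $D_0$-twisted analogue of the map computing coranks in Lemma \ref{cork} — controls the genuinely normal directions. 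Organising all of this into one commutative diagram and checking that every connecting map vanishes is the delicate bookkeeping; once it is in place, the numerical conclusion follows at once from the displayed chain of inequalities.
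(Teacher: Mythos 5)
Your skeleton is correct, and it is essentially the architecture of the proof in \cite{klm} (the survey itself states Theorem \ref{anysurface} without proof, so the comparison is with the cited source): restrict $N_S(-1)$ to a general $D\in|D_0|$, compare $N_S(-1)|_D$ with $N_{D/\PP^r}(-1)$ through $N_{D/S}\cong\O_D(D_0)$, split off the $r-r'$ directions normal to the span $\Lambda$ of $D$, and convert $h^0(N_{D/\Lambda}(-1))$ into $r'+1+{\rm cork}\,\Phi_{H_D,\omega_D}$ via Lemma \ref{cork}(iii); your verifications that $D$ is linearly normal in $\Lambda$ (from $H^1(H-D_0)=0$ plus linear normality of $S$) and of positive genus are also right, and the three inequalities you chain are numerically consistent. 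The problem is that the proposal stops exactly at what it itself calls the technical heart, and that heart is not bookkeeping: it is the entire content of the theorem, since everything you do prove is formal. Of the two facts you leave open, the more accessible is the vanishing of the coboundary $\delta\colon H^0(N_S(-1)|_D)\to H^1(\O_D(D_0-H))$. By Serre duality $H^1(\O_D(D_0-H))^{\vee}\cong H^0(\omega_D\otimes(H-D_0)|_D)$, and if $\sigma\in H^0(D_0)$ is the equation of $D$, then a section $\sigma s$ of $H$ vanishing on $D$, with $s\in H^0(H-D_0)$, satisfies $d(\sigma s)|_D=s|_D\otimes d\sigma$; hence the composite $H^0(H-D_0)\otimes H^0(\omega_D)\to H^0(N^{\vee}_{D/\PP^r}\otimes\omega_D(H))\to H^0(\omega_D\otimes(H-D_0)|_D)$ is precisely $\mu_{V_D,\omega_D}$ (up to the trivialization of $N^{\vee}_{D/S}(D_0)$ by $d\sigma$), and its surjectivity forces $\delta=0$. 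You gesture at this mechanism ("$V_D$ records the hyperplanes through $\Lambda$") but never produce the identification, which is the only point where hypothesis (ii) actually meets your diagram.

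The second fact, $H^0(N_S(-H-D_0))=0$, is genuinely beyond the single commutative diagram you promise, because it is a statement on the surface: one needs a descending induction on $h^0(N_S(-1)(-kD_0))$, with the vanishing for $k\gg0$ supplied by bigness and nefness of $D_0$ together with global generation of $N_S(-1)$ (Proposition \ref{inf}(ii)), and at each step a curve-level vanishing $H^0(N_S(-1)(-kD_0)|_D)=0$ for $k\ge1$, each carrying its own coboundary into $H^1(\O_D(D_0-H-kD_0))$ to be killed. This is where $\Phi_{H_D,\omega_D(D_0)}$ and $\mu_{V_D,\omega_D(D_0)}$ enter, Serre-dually; but your hypotheses control only the twists $k=0,1$, so you must additionally show that the required surjectivities propagate to all higher twists by $D_0$ (or that the corresponding obstruction groups vanish for $k\ge2$), a point the proposal does not register. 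Moreover, the curve-level vanishing $H^0(N_{D/\PP^r}(-1)(-D_0))=0$ is not a formal consequence of surjectivity of $\Phi_{H_D,\omega_D(D_0)}$ alone: since $H^1(\omega_D^2(H+D_0))=0$ and $H^1(\omega_D(D_0))=0$, the conormal and Euler sequences give a surjection of $H^1(N^{\vee}_{D/\PP^r}\otimes\omega_D(H+D_0))\cong H^0(N_{D/\PP^r}(-1)(-D_0))^{\vee}$ onto ${\rm coker}\,\mu_{H_D,\omega_D(D_0)}$, so the surjectivity of this multiplication map — which is not among your stated inputs — must itself be established from general results on multiplication maps on curves. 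These derivations are precisely the lemmas that occupy the bulk of the proof in \cite{klm}; as it stands, your text is a correct reduction scheme together with an accurate guess of where the hypotheses should act, not a proof.
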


Now define

\begin{defi}
\label{m(L)}
Let $S$ be a smooth surface and let $L$ be an effective line bundle on $S$ such that the general
divisor $D \in |L|$ is smooth and irreducible. We say that $L$ is {\it trigonal}, if $D$ is such. We denote by $\Cliff(L)$ the Clifford index of $D$. Moreover, when $L^2 > 0$, we set
\[ \mbox{$\varepsilon(L) = 3$ if $L$ is trigonal; $\varepsilon(L) =5$ if $\Cliff(L) \geq 3$, $\varepsilon(L) =0$ if $\Cliff(L) = 2$;} \]
\[ m(L) =  \begin{cases} \frac{16}{L^2} & {\rm if} \  L.(L+K_Y) = 4; \\ \frac{25}{L^2} &
{\rm if} \ L.(L+K_Y) = 10 \ {\rm and \ the \ general } \\ 
& {\rm divisor \ in} \ |L| \ {\rm is \ a \ plane \
quintic}; \\ \frac{3L.K_Y + 18}{2L^2} + \frac{3}{2} & {\rm if} \ 6 \leq L.(L+K_Y) \leq 22 \
{\rm and} \ L \ {\rm is \ trigonal}; \\ \frac{2 L.K_Y - \varepsilon(L)}{L^2} + 2 & {\rm otherwise}.
\end{cases} \]
\end{defi}

We give first a general result, application of Theorem \ref{anysurface}, that holds on any surface, once given a suitable embedding. 

\begin{thm} \cite[Cor.~3.3]{klm}

Let $S \subset \PP V$ be a smooth irreducible surface with $V \subseteq H^0(mL+D)$, where $L$ is a base-point free, big, nonhyperelliptic line bundle on $S$ with $L \cdot (L+K_S) \ge 4$ and $D\ge 0$ is a divisor. Suppose that $S$ is regular or linearly normal and that $m$ is such that $H^1((m-2)L + D) = 0$ and $m > \max\{m(L) - (L\cdot D)/L^2, (L \cdot K_S +2-L\cdot D)/L^2 + 1\}$. Then $S$ is not extendable.
\end{thm}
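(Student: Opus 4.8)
The plan is to deduce this from Theorem~\ref{anysurface} by taking $D_0 = L$ and letting $C$ denote a general member of $|L|$. Since $L$ is base-point free, big, nonhyperelliptic with $L\cdot(L+K_S)\ge 4$, this $C$ is smooth, irreducible and nonhyperelliptic of genus $g = 1+\tfrac12 L\cdot(L+K_S)\ge 3$; in particular it is not rational, so $D_0=L$ is a legitimate choice. Writing $H=mL+D$ for the hyperplane bundle, the whole game is to check the three hypotheses of Theorem~\ref{anysurface} for this $D_0$ and then to show that the resulting upper bound ${\rm cork}\,\Phi_{H_C,\omega_C}$ actually vanishes. Since $\alpha(S)\ge 0$ always by Proposition~\ref{inf}(iii), this forces $\alpha(S)=0$, and Zak-L'vovsky's theorem (Theorem~\ref{zak}) then yields non-extendability, once one notes that $S$ is not a quadric because its degree $(mL+D)^2$ is far too large.

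First I would translate the two bounds inside the $\max$ into degree conditions on the curve. By adjunction $H_C=H|_C$ has degree $mL^2+L\cdot D$ while $\omega_C=(K_S+L)|_C$ has degree $2g-2=L^2+L\cdot K_S$. A direct computation shows that $m > (L\cdot K_S+2-L\cdot D)/L^2+1$ is exactly $\deg H_C > 2g$, i.e. $\deg H_C\ge 2g+1$, and that $m > m(L)-(L\cdot D)/L^2$ is exactly $\deg H_C > m(L)\cdot L^2$. In the generic (``otherwise'') case, unwinding $m(L)$ via $L\cdot K_S=(2g-2)-L^2$ turns the latter into $\deg H_C > 4g-4-\varepsilon(L)$ with $\varepsilon(L)\in\{0,5\}$ according to whether $\Cliff(C)=2$ or $\ge 3$, while the remaining branches of $m(L)$ record the corresponding thresholds for trigonal $L$ and for plane quintic/sextic sections. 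Thus the two hypotheses are precisely ``$H_C$ is nonspecial of high degree'' and ``$\deg H_C$ exceeds the Clifford-index-refined Gaussian threshold''.

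Next I would verify the hypotheses of Theorem~\ref{anysurface}. The vanishing $H^1(H-D_0)=H^1((m-1)L+D)=0$ follows from the assumed $H^1((m-2)L+D)=0$ by restricting to $C$ through $0\to (m-2)L+D\to (m-1)L+D\to ((m-1)L+D)|_C\to 0$, once $\deg H_C\ge 2g+1$ makes the restricted bundle nonspecial so that $H^1(C,((m-1)L+D)|_C)=0$. The same high-degree, nonspeciality input drives the surjectivity of the multiplication maps $\mu_{V_D,\omega_D}$, $\mu_{V_D,\omega_D(D_0)}$ and of the auxiliary Gaussian map $\Phi_{H_D,\omega_D(D_0)}$ in (i)--(ii): these are the standard surjectivity statements for multiplication and Gaussian maps of line bundles of large degree on a curve, and here I would simply invoke the (Clifford-index-refined) curve criteria used throughout \cite{klm}. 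Granting (i)--(ii), Theorem~\ref{anysurface} gives $\alpha(S)\le {\rm cork}\,\Phi_{H_C,\omega_C}$.

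Finally, and this is the crux, I would show ${\rm cork}\,\Phi_{H_C,\omega_C}=0$: this is exactly the content of $\deg H_C > m(L)\cdot L^2$, which in each branch is the sharp degree guaranteeing surjectivity of the Gaussian map $\Phi_{A,\omega_C}$ for a line bundle $A$ on $C$ in terms of $\deg A$ and $\Cliff(C)$. The main obstacle is therefore located entirely on the curve $C$: one needs surjectivity criteria for $\Phi_{A,\omega_C}$ that are tight enough to match the thresholds $4g-4-\varepsilon(L)$ (and their trigonal/plane-curve analogues), and it is here, not in the surface-theoretic reduction, that the whole arithmetic of $m(L)$ and $\varepsilon(L)$ is hiding and where the proof does its real work. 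Putting these together gives $\alpha(S)\le 0$, hence $\alpha(S)=0$, and Theorem~\ref{zak} finishes the argument. (If $S$ is only assumed regular rather than linearly normal, one first passes to the linearly normal model in $\PP H^0(mL+D)$, the condition $h^1(\O_S)=0$ ensuring that this reduction does not affect the computation of $\alpha$, and then applies the linearly normal case just treated.)
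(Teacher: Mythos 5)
Your overall architecture is the intended one: the survey presents this result precisely as an application of Theorem \ref{anysurface}, and your choices --- $D_0=L$, a general $C\in|L|$ smooth, irreducible, nonrational of genus $g=1+\tfrac12 L\cdot(L+K_S)\ge 3$, the translation of the two numerical bounds into $\deg H_C\ge 2g+1$ and $\deg H_C>m(L)L^2$, the identification of the branches of $m(L)$ with the sharp curve-level surjectivity thresholds for Gaussian maps $\Phi_{A,\omega_C}$ (matching the trigonal bound $3g+7$, the plane quintic bound $26$, and $4g-4-\varepsilon(L)$ in the generic branch), and the endgame $\alpha(S)\le{\rm cork}\,\Phi_{H_C,\omega_C}=0$ combined with Proposition \ref{inf}(iii) and Theorem \ref{zak} --- all line up with how \cite{klm} deduces this corollary from its Theorem 1.1.

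Two steps, however, are genuinely defective. First, your derivation of the hypothesis $H^1(H-D_0)=H^1((m-1)L+D)=0$ fails as written: the bundle you restrict to $C$ is $((m-1)L+D)|_C$, of degree $\deg H_C-L^2$, and $\deg H_C\ge 2g+1$ makes this nonspecial only when $L^2\le 2$. Concretely, take $S$ a K3 surface, $D=0$, $\Cliff(L)\ge 3$ and $m=2$, which your inequalities allow (then $m(L)=2-5/L^2<2$); the restricted bundle is $L|_C=\omega_C$, which is special. The vanishing $H^1(L)=0$ is still true on a K3, but via the surjectivity of $H^0(L)\to H^0(\omega_C)$, not via nonspeciality, so this step requires an actual argument --- for instance exploiting the stronger bound $\deg H_C>m(L)L^2$ together with a discussion of the size of $L\cdot K_S$ --- and cannot be dispatched in the one line you give. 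Second, you spend the hypothesis $H^1((m-2)L+D)=0$ entirely on this (flawed) step, and then propose to ``simply invoke'' curve criteria for hypothesis (ii) of Theorem \ref{anysurface}; but those criteria concern complete linear series, whereas (ii) involves $\mu_{V_D,\omega_D}$ and $\mu_{V_D,\omega_D(D_0)}$ with $V_D=\Im\{H^0(H-L)\to H^0((H-L)|_D)\}$. The actual role of $H^1((m-2)L+D)=H^1(H-2D_0)=0$ is exactly to make the restriction map $H^0(S,H-L)\to H^0(D,(H-L)|_D)$ surjective, i.e.\ to identify $V_D$ with the complete series $H^0((H-L)|_D)$, so that the curve-level surjectivity criteria become applicable at all. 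As it stands, your proposal never verifies (i)--(ii) at the point where the given hypothesis is designed to be used, and the final parenthetical reduction of the ``regular'' case to the linearly normal one is likewise asserted rather than proved; so the reduction to curve-level Gaussian and multiplication results --- which you rightly call the crux --- is left incomplete.
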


See also \cite[Cor.~3.5]{klm} for a better result in the case of adjoint embeddings.

\vskip .3cm

We now list what is known in terms of the Kodaira dimension. We will concentrate on minimal surfaces.

\subsection{Surfaces of Kodaira dimension $-\infty$}

\hskip 7cm

For embeddings of $\PP^2$ the non-extendability was settled by Scorza \cite{s1}. See also Proposition \ref{h1t}. 

As explained in \cite[\S 5.5]{bs}, if a variety $X$ is a $\PP^d$-bundle over a smooth variety and $X$ is an ample divisor on a locally complete intersection variety $Y$, then in almost all cases, including the case of surfaces, $Y$ is a $\PP^{d+1}$-bundle over the same base and $X$ belongs to the tautological linear system. This shows that l.c.i. extensions of  $\PP^d$-bundles are $\PP^{d+1}$-bundles. 

On the other hand, as far as we know, not so many results are known, in general, about the extendability of ruled surfaces. 

A result was proved in \cite[Thm. 3.3.22]{db} for rational ruled surfaces, as an application of Theorem \ref{anysurface}. We believe that this can be extended to other ruled surfaces.

\subsection{Surfaces of Kodaira dimension $0$}

\hskip 7cm

The cases of K3 and Enriques surfaces have been treated above. Abelian and bi-elliptic surfaces are not extendable by Proposition \ref{h1t}.

\subsection{Surfaces of Kodaira dimension $1$}

\hskip 7cm

The only results that we are aware of are the ones in \cite{lms}, which hold especially for Weierstrass fibrations.

\subsection{Surfaces of general type}

\hskip 7cm

We are not aware of results, except the one below, that applies, for example, for pluricanonical embeddings.

\begin{thm} \cite[Cor.~1.2]{klm}

Let $S \subset \PP V$ be a minimal surface of general type with base-point free and nonhyperelliptic canonical bundle and $V \subset H^0(mK_S + D))$, where $D \ge 0$ and either $D$ is nef or $D$ is reduced and $K_S$ is ample. Suppose that $S$ is regular or linearly normal and that 
\[ m \geq \begin{cases} 9 & {\rm if} \ K_Y^2 = 2; \\ 7 & {\rm if} \ K_Y^2 = 3; \\ 6 & {\rm if}
\ K_Y^2 = 4 \ {\rm and \ the \ general \ curve \ in} \ |K_Y| \ {\rm is \ trigonal \ or \ if }  \\ & K_Y^2 = 5 {\rm \ and \ the \ general \ curve \ in} \ |K_Y| \ {\rm is \ a \ plane \ quintic;}
\\ 5 & {\rm if \ either \ the \ general \ curve \ in} \ |K_Y| \ {\rm has \ Clifford \ index} \ 2 \
{\rm or} \\ & 5 \leq K_Y^2 \leq 9 \ {\rm and \ the \ general \ curve \ in} \ |K_Y| \ {\rm is \
trigonal;} \\ 4 & {\rm otherwise}.
\end{cases} \]
Then $S$ is not extendable.
\end{thm}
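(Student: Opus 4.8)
The plan is to realize this as an application of Theorem \ref{anysurface}, combined with Zak--L'vovsky's theorem. Since $S$ is a surface of general type it is certainly not a quadric, and $\alpha(S) \ge 0$ by Proposition \ref{inf}(iii); hence by Theorem \ref{zak} it suffices to show $\alpha(S) = 0$. Writing the hyperplane bundle as $H = mK_S + D$, I would apply Theorem \ref{anysurface} taking the auxiliary bundle $D_0 = K_S$, and denote by $C$ a general member of $|K_S|$ (playing the role of the curve called $D$ in that theorem, renamed to avoid clashing with the fixed divisor $D$ in $H$). The conclusion of Theorem \ref{anysurface} then reads $\alpha(S) \le {\rm cork}\,\Phi_{H_C,\omega_C}$, so everything reduces to checking the hypotheses of that theorem and then proving that $\Phi_{H_C,\omega_C}$ is surjective.

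First I would dispose of the structural hypotheses. By assumption $K_S$ is base-point free and nonhyperelliptic, and it is big because $S$ is of general type, so $D_0 = K_S$ is big and base-point free. By adjunction $2g(C)-2 = C\cdot(C+K_S) = 2K_S^2$, whence $g(C) = 1 + K_S^2 \ge 2$ and the general $C$ is not rational. The vanishing $H^1(H-D_0) = H^1\big((m-1)K_S + D\big) = 0$ I would derive from a Kodaira--Kawamata--Viehweg type vanishing, after decomposing $(m-1)K_S + D = K_S + \big((m-2)K_S + D\big)$ and checking that $(m-2)K_S + D$ is nef and big throughout the stated range of $m$, using the hypotheses on $D$ (nef, or reduced with $K_S$ ample). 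This vanishing is what identifies $V_C = \Im\{H^0(H-D_0) \to H^0((H-D_0)|_C)\}$ with the image of a full linear series and makes the two multiplication maps $\mu_{V_C,\omega_C}$ and $\mu_{V_C,\omega_C(D_0)}$ tractable. The alternative hypothesis ``$S$ regular or linearly normal'' enters exactly here: if $S$ is linearly normal one applies Theorem \ref{anysurface} directly, while if $S$ is regular one applies it to the linearly normal model of $S$ and transfers non-extendability back, the regularity controlling the relevant obstruction.

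The heart of the argument is the surjectivity, on the curve $C$, of the Gaussian maps $\Phi_{H_C,\omega_C}$ and $\Phi_{H_C,\omega_C(D_0)}$ and of the multiplication maps above. By adjunction $\omega_C = 2K_S|_C$ and $\omega_C(D_0) = 3K_S|_C$, of degrees $2K_S^2$ and $3K_S^2$, while $H_C = (mK_S+D)|_C$ has degree $mK_S^2 + D\cdot K_S$ growing linearly in $m$ and in $K_S^2$. I would feed these degrees into the standard surjectivity criteria for Gaussian and multiplication maps on a curve (of Wahl and of Ciliberto--Miranda, in the refined form of \cite{klm}), which guarantee surjectivity once the degrees are large enough relative to the genus and the Clifford index of $C$. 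This is precisely the step that produces the case distinction in the statement: the lower bound required on $m$ depends on $K_S^2$ and on whether the general canonical curve $C$ is trigonal, a plane quintic, of Clifford index $2$, or of Clifford index $\ge 3$, since these strata carry different thresholds in the criteria, and the small-$K_S^2$ cases are the most delicate and force the largest $m$. I expect this numerical bookkeeping---verifying that the stated bounds on $m$ really do push every relevant degree above its surjectivity threshold in each Clifford-index stratum---to be the main obstacle.

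Once $\Phi_{H_C,\omega_C}$ is shown to be surjective we obtain ${\rm cork}\,\Phi_{H_C,\omega_C} = 0$, so Theorem \ref{anysurface} yields $\alpha(S) \le 0$, hence $\alpha(S) = 0$ by Proposition \ref{inf}(iii). As $S$ is not a quadric, Theorem \ref{zak} then gives that $S$ is not extendable, completing the argument.
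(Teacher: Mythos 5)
Your proposal follows exactly the paper's own route: the cited theorem is the specialization $L=K_S$ of the general criterion \cite[Cor.~3.3]{klm} stated in the survey just before it, which is in turn deduced from Theorem \ref{anysurface} (applied with $D_0=K_S$ and the curve a general member of $|K_S|$) together with Clifford-index-stratified surjectivity criteria for the Gaussian maps $\Phi_{H_C,\omega_C}$, $\Phi_{H_C,\omega_C(K_S)}$ and the multiplication maps, the stated thresholds on $m$ being precisely the quantity $m(K_S)$ of Definition \ref{m(L)} (e.g.\ $m(K_S)=16/K_S^2=8$ when $K_S^2=2$, forcing $m\ge 9$, and $m(K_S)=3+9/K_S^2$ in the trigonal range). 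Your concluding step --- corank zero gives $\alpha(S)\le 0$, hence $\alpha(S)=0$ by Proposition \ref{inf}(iii) and non-extendability by Theorem \ref{zak} --- together with the treatment of the ``regular or linearly normal'' alternative and the Kawamata--Viehweg/Ramanujam-type vanishing for the $H^1$ hypotheses, is the intended argument, so your attempt is correct and essentially identical to the paper's.
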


\appendix
\section{Extendability of canonical models of plane quintics} 
\centerline{\scalebox{.8}{by THOMAS DEDIEU}}

\bigskip

\def\P{\mathbf{P}}
\def\C{\mathbf{C}}
\newcommand{\bx}{\mathbf x}
\def\epsilon{\varepsilon}
\newcommand{\trsp}[1]{\vphantom{#1}^{\mathsf T\!} #1}
\newcommand{\ie}{i.e.,\ } 

\newcounter{enonce}
\def\theenonce{\thethm.\arabic{enonce}}
\def\sousprop{\newline\refstepcounter{enonce}{\normalfont\theenonce}.}

\renewcommand{\theequation}{\thethm.\arabic{equation}}

Let $C$ be a smooth plane curve of degree $d \geq 5$, which we most
often consider in its canonical embedding in $\P^{g-1}$, $g=\tfrac 12
(d-1)(d-2)$. 
By \cite[Rmk.~4.9]{wahl90} one has $\alpha(C)=10$. 
If $d \geq 7$ then $C$ has Clifford index strictly larger than $2$,
hence $h^0(N_{C/\P^{g-1}}(-k))=0$ for all $k>1$,
and Theorems \ref{zak} and \ref{cds1} apply together, to the
effect that $C$ is precisely $10$-extendable; a universal extension of
$C$ is described in \cite[\S 9]{cds-thomas}, which shows that all surface
extensions of $C$ are rational.
If $d=5$ (resp.\ $6$), then $C$ has Clifford index $1$ (resp.\ $2$), 
$h^0(N_{C/\P^{g-1}}(-2))=3$ (resp.\ $1$), and
$h^0(N_{C/\P^{g-1}}(-k))=0$ for all $k>2$, see
\cite{knutsen18} and the references therein.
The expectation in this case is that there should exist a $12$ (resp.\ $10$)
dimensional family of non-isomorphic surface extensions of $C$, and
inside this a $2$ (resp.\ $0$) dimensional family of surfaces
in which the first infinitesimal neighbourhood of $C$ is trivial,
see \cite[\S 1]{cd-double}.
The case $d=6$ is studied in detail in \cite{cd-double} and
\cite[\S 3.2]{cd-higher}: the above expectations are met, and a
suitable embedding of the
double cover of $\P^2$ branched over $C$ is a $K3$ surface
extending $C$; this $K3$ surface is an anticanonical divisor of the
weighted projective space $\P(1^3,3)$, and so are all $K3$ extensions
of $C$. In this appendix I show
that a similar situation holds in the case $d=5$.

I am grateful to the anonymous referee for pointing out
\cite[Thm.~7.2]{saint-donat}, which triggered a substantial
improvement of the results presented in this text.

\begin{prop}
\label{th:main}
Let $C$ be a smooth plane quintic, in its canonical embedding $C
\subset \P^5$.
\sousprop{}\label{th:K3}
The moduli space of $K3$ extensions of $C$ has dimension $12$.
\sousprop{}\label{th:K3triv}
Inside the moduli space of $K3$ extensions of $C$,
there is a $2$-dimensional family of extensions
in which the first infinitesimal neighbourhood of $C$ is
trivial.
\sousprop{}\label{th:Totaro}
There exists a $14$-extension of $C$, which is an arithmetically
Gorenstein normal Fano variety of dimension $15$ and index $13$.
\end{prop}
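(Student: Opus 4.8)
The plan is to prove the three statements of Proposition~\ref{th:main} in turn, exploiting throughout the fact that a smooth plane quintic $C$ has $g=6$, $\Cliff(C)=1$, and $\alpha(C)={\rm cork}\,\Phi_{\omega_C}=10$. The strategy for part \ref{th:K3} is to identify the $K3$ extensions of $C$ explicitly. A smooth plane quintic is a divisor of type $\O_{\P^2}(5)$, so the double cover $\pi:S\to\P^2$ branched over $C$ is a smooth surface with $K_S=\pi^*(\O_{\P^2}(-3)+\tfrac12\O_{\P^2}(5))=\pi^*\O_{\P^2}(-\tfrac12)$; one checks that $S$ is in fact a $K3$ surface, realized as an anticanonical divisor in a weighted projective space, in analogy with the $d=6$ case treated in \cite{cd-double}. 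One then shows that the canonical curve $C\subset\P^5$ is cut on $S$ by a hyperplane in a suitable (weighted) embedding, so $S$ is a genuine extension. First I would set up the weighted projective space $\P=\P(1^3,2)$ (the natural candidate, since the $K3$ double cover of $\P^2$ branched on a sextic lives in $\P(1^3,3)$, and here the branch degree is $5$, so the weight drops accordingly), verify that the generic anticanonical $K3$ there restricts to $C$ on a hyperplane section, and then count moduli: the family of such $K3$'s modulo projective automorphisms compatible with the fixed $C$ should have dimension~$12$. The dimension count is most safely anchored to Theorem~\ref{cds1}, which via ${\rm cork}\,\Phi_{\omega_C}=10$ governs the space of ribbons/first-order extensions, and the precise comparison with the honest $K3$ moduli is the delicate bookkeeping.

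For part \ref{th:K3triv}, the $2$-dimensional subfamily of extensions with trivial first infinitesimal neighbourhood is, conceptually, the locus where the normal-bundle extension $0\to T_C\to T_S|_C\to N_{C/S}\to 0$ splits, equivalently where the relevant class in $\Coker\Phi_{\omega_C}$ (interpreted via ribbons as in Voisin's picture, recalled in Section~\ref{can}) vanishes. Since $h^0(N_{C/\P^5}(-2))=3$ for the plane quintic (as recorded in the appendix's preamble, following \cite{knutsen18}), this governs the second-order obstruction space; I would match the expected dimension $2$ against this $h^0$, following the heuristic of \cite[\S1]{cd-double} and the structure of the proof there for $d=6$. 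The key is to show that the condition ``trivial first infinitesimal neighbourhood'' cuts out a subvariety of the expected codimension inside the $12$-dimensional moduli space, which reduces to a transversality/genericity statement about the branch data.

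Part \ref{th:Totaro} is the construction of a single large extension, and here I would invoke Theorem~\ref{cds1} directly: since $\Cliff(C)=1<3$ that theorem does not formally apply to $C$, so the construction must be explicit rather than a black-box appeal. The natural candidate is a cone-like or Veronese-type arithmetically Gorenstein variety; concretely, the expected $15$-dimensional Fano of index $13$ should arise as (a suitable linear section or projective model of) an anticanonical divisor in a weighted projective space of the form $\P(1^{N},w)$, chosen so that its hyperplane section iterates down to the $K3$ of part \ref{th:K3} and eventually to $C$. I would: (1) write down the candidate $Y\subset\P^{5+14}=\P^{19}$ as $\Proj$ of an explicit graded Gorenstein ring; (2) verify $\dim Y=15$, $\omega_Y=\O_Y(-13)$, and arithmetic Gorenstein-ness from the resolution of the coordinate ring; and (3) check that $C$ is recovered as a codimension-$14$ linear section. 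The main obstacle I expect is precisely step (3) together with confirming the index and Gorenstein data: producing the graded ring is routine, but verifying that $Y$ is \emph{normal}, \emph{not a cone}, and genuinely has $C$ as an iterated hyperplane section (rather than merely a projection or degeneration of one) requires a careful analysis of the singularities of $Y$ and of the restriction maps on each graded piece $H^0(\O_Y(k))\to H^0(\O_C(k))$, which is where the bulk of the real work lies.
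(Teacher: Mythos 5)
Your proposal has genuine gaps in all three parts. First, your initial construction is ill-posed: a double cover of $\P^2$ branched over the quintic $C$ alone does not exist, since the branch divisor of a double cover must be $2$-divisible in the Picard group --- your formula $K_S=\pi^*\O_{\P^2}(-\tfrac12)$ is the symptom, as $\O_{\P^2}(-\tfrac12)$ is not a line bundle. The correct construction (Ide's, which the paper uses) branches over the \emph{sextic} $C+\Gamma$ for an auxiliary line $\Gamma$, after blowing up the five points of $\Gamma\cap C$; equivalently, the $K3$ extensions are weighted quintics $\ell(x)y^2+2g(x)y+f(x)=0$ in $Y=\P(1^3,2)$ containing $C=\{y=f(x)=0\}$. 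You do land on the right ambient space $\P(1^3,2)$, but your dimension count is not anchored correctly: Theorem~\ref{cds1} requires $\Cliff\ge 3$ and is inapplicable to a plane quintic ($\Cliff(C)=1$), as you yourself observe when discussing part~\ref{th:Totaro}. The paper counts directly: $h^0(Y,\I_{C/Y}(5))-1=h^0(\O_Y(3))=13$, minus the $1$-dimensional group of automorphisms of $Y$ fixing $\{y=0\}$, gives $12$. More seriously, you never address \emph{completeness} of the family, i.e.\ that every $K3$ extension of $C$ arises this way; without it you obtain only a lower bound $\dim\ge 12$ on the fibre of $c_6$. The paper closes this via Saint-Donat's Rmk.~7.13 (every $K3$ surface containing the canonical plane quintic lies on the cone over the Veronese surface, that is on $\P(1^3,2)$ embedded by $\O(2)$), combined with the divisor class group of weighted projective spaces and adjunction to force such a surface to be a weighted quintic cutting $C$ by the quadric $y=0$.

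Parts~\ref{th:K3triv} and~\ref{th:Totaro} of your proposal are programs rather than proofs. For~\ref{th:K3triv} the paper does not argue by expected dimension or transversality of branch data: it \emph{exhibits} the family, namely the double covers branched over the (disjoint proper transforms of) $C$ and a line $\Gamma$; triviality of the first infinitesimal neighbourhood follows from the Beauville--M\'erindol argument, and the two parameters come from moving $\Gamma$ in the dual plane, with two general lines giving non-isomorphic surfaces because the cover remembers the five points of $\Gamma\cap C$. Your matching of the number $2$ against $h^0(N_{C/\P^5}(-2))=3$ merely restates the expectation of Ciliberto--Dedieu and proves nothing. For~\ref{th:Totaro}, your steps (1)--(3) are exactly where the content lies, and the paper discharges them with a concrete construction you would need to find: the quintic hypersurface $X\subset\P(1^3,2,2^{13})$ of equation $f(\mathbf x)+G_1(\mathbf x,y)z_1+\cdots+G_{13}(\mathbf x,y)z_{13}=0$, where the $G_i$ form a basis of $H^0(Y,\O(3))$, embedded in $\P^{19}$ by $|\O_X(2)|$, which is $\tfrac1{13}$ of the anticanonical series; arithmetic Gorensteinness and normality are then \emph{not} checked by a bare-hands analysis of the graded ring, but follow because $X$ has canonical curves as linear sections, by the criterion of Ciliberto--Dedieu--Sernesi.
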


The precise meaning of \ref{th:K3} is that
the fibre of the moduli map
$c_6: \mathcal {KC}_6 \to \mathcal {M}_6$ (introduced in \eqref{pi})
over a general plane quintic has dimension $12$.
By way of comparison, note that $c_6$ is surjective and
has general fibre of dimension $10$.

Our story begins with
the following construction of Ide \cite[\S 4.2]{ide}.
\medskip
\refstepcounter{thm}
\paragraph{\bfseries Construction \thethm}
\label{th:construction}
Let $f$ be a degree $5$ homogeneous polynomial in $\bx =
(x_0,x_1,x_2)$ defining $C \subset \P^2$, and $\ell$ a linear
functional in $\bx$ defining a line $\Gamma$ intersecting $C$ transversely.
Then the degree $5$ weighted hypersurface $S$ in $Y=\P(1^3,2)$ defined
by 
\[
  \ell(\bx) y^2 + f(\bx) = 0,
\]
in homogeneous coordinates $(\bx,y)$ so that the
$x_i$ have weight $1$ and $y$ has weight $2$,
is a $K3$ surface (with an ADE singularity), and $C$ is the complete
intersection of $S$ and the degree $2$ hypersurface
$\Pi$ defined by $y=0$.

A geometric construction of $S$ is as follows. Let
$\epsilon: P' \to \P^2$ be the
blow-up at the five points of $\Gamma\cap C$, and $\Gamma'$ and $C'$
be the proper transforms of $\Gamma$ and $C$ respectively. Let
$\pi:S' \to P'$ be
the double cover branched over the smooth curve $\Gamma'+C'$. 
The pull-back $\pi^*\Gamma'$ is a $(-2)$-curve, and $S$ is obtained
from $S'$ by contracting it to an $A_1$ double point.

The latter contraction is realized by the map given by the complete
linear system $|-\pi^* K_{P'}|$.
Arguing as in \cite[Prop.~2.2]{cd-double}, one sees that the latter
has dimension $6$, 
has $\pi^*\Gamma'$ as a fixed part, and the general member of its moving
part is mapped birationally by $\epsilon\circ\pi$ to a smooth quintic
$C_1$ passing through the $5$ points of $\Gamma \cap C$ and otherwise
everywhere tangent to $C$.
Note that $|-K_{P'}|$ has $\Gamma'$ as fixed part and the pull-back by
$\epsilon$ of the linear system of conics as moving part, hence
for all $D \in |C'|$ its restriction to $D$ is the canonical divisor $K_D$.
This implies that the image of the map given by $|-\pi^* K_{P'}|$ is a
$K3$ surface $S \subset \P^6$ (with one ordinary double point), having
$C$ as a hyperplane section. 
As a sideremark, note that $|-\pi^* K_{P'}|$ maps the pull-backs by $\pi$
of the $5$ exceptional curves of $\epsilon$ to $5$ lines passing through
the node of $S$.

The above description is conveniently complemented by adopting the following
equation based point of view.

\bigskip
\refstepcounter{thm}
\paragraph{\bfseries Weighted projective geometry \thethm}
\label{p:weighted}
For useful background on the subject, I recommend
\cite[Exercises V.1.3]{kollar} and \cite{iano-fletcher}.
The weighted projective space $Y=\P(1^3,2)$ has only one singular point, namely
the coordinate point 
$(0:0:0:1)$ at which it has a quotient singularity of type
$\frac 12 (1,1,1)$, that is an ordinary quadruple point (locally
isomorphic to the cone over the Veronese surface $v_2(\P^2)$).
It has dualizing sheaf $\O(5)$, which is not invertible. All
(weighted) quintic hypersurfaces in $Y$ pass through the singular
point $(0:0:0:1)$, and the general such has an ordinary double point
there.

By adjunction, the general quintic surface in $Y$ has trivial
(invertible) canonical sheaf, hence is a $K3$ surface (with canonical
singularities). Moreover, the general complete intersection of
hypersurfaces of degrees $5$ and $2$ is smooth, with
(invertible) canonical sheaf $\O(2)$.

Maintaining the notation of \ref{th:construction}, the curve $C$ may
be embedded in $Y$ as the complete intersection defined by
\begin{equation}
\label{th:CI}
  y = f(\bx)=0.
\end{equation}
On the other hand, the automorphisms of $Y$ are given in homogeneous
coordinates by
\begin{equation}
\label{th:aut}
  (\bx:y) \mapsto (A \bx: a y+Q(\bx))
\end{equation}
with $A \in \GL(3)$, $a \in \C^*$, and
$Q \in H^0(\P(1^3),\O(2))$,
see for instance \cite[\S 8]{alamrani}.
One may therefore put every complete intersection of hypersurfaces
of degrees $2$ and $5$ in the form \eqref{th:CI}
by acting with an automorphism of $Y$, which shows that it is isomorphic
to a plane quintic curve. 

The sheaf $\O(2)$ is invertible on $Y$, and the associated complete
linear system induces an embedding $\phi: Y \to \P^6$ with image a
cone over a Veronese surface 
$v_2(\P^2) \subset \P^5$, with vertex a point.
The map $\phi$ sends $S$ to a $K3$ surface of degree $10$ in $\P^6$
passing through the vertex, and having
$C$ (in its canonical embedding) as a hyperplane section: in other
words $\phi(S)$ is an extension of the canonical curve $C$.
It coincides with the model of $S$ in $\P^6$ given by
$|-\pi^*K_{P'}|$ and described in \ref{th:construction}.
The sheaf $\O_Y(1)$ is not invertible on $Y$ and neither is its
restriction to $S$; the associated complete linear series induces
the rational map $S \dashrightarrow \P^2$ coinciding with
$\epsilon\circ\pi$
off the node.

\begin{proof}[Proof of Proposition~\ref{th:main}]
As we have seen, we may consider $C$ as the complete intersection in
$Y$ of two hypersurfaces of degrees $2$ and $5$ as in \eqref{th:CI}.
The linear system of quintics containing $C$ has
dimension
\[
  h^0(Y, {\mathcal I}_{C/Y}(5)) - 1 = h^0(Y, \O_Y(3)) = 13,
\]
and its general element gives a $K3$ surface extending $C \subset
\P^5$.
By the description in \eqref{th:aut},
the automorphisms of $Y$ fixing the hypersurface
$y=0$ are all of the form
$(\bx:y) \mapsto (\bx: a y)$,
hence they form a $1$-dimensional group.
From this we conclude that there
is a $12$-dimensional family of mutually non-isomorphic $K3$
extensions of $C$.

On the other hand, by \cite[Rmk.~7.13]{saint-donat} every $K3$ surface
containing $C$ is contained in a cone over the Veronese surface, that
is in $\P(1^3,2)$ in its model given by $\O(2)$.
By the computation of the divisor class group of weighted projective
spaces \cite[Thm.~7.1]{beltrametti-robbiano} and adjunction, such a surface is
necessarily of the same kind as those considered above, \ie it is a
weighted quintic in $\P(1^3,2)$ in which $C$ is cut out by a quadric.
Therefore the $12$-dimensional family found above is the full fibre of
$c_6$ over $C$, and \ref{th:K3} is proved.

Inside this family, there is a $2$-dimensional family of surfaces in which
the first infinitesimal neighbourhood of $C$ is trivial, namely
those surfaces constructed as in \ref{th:construction} by taking a double
cover branched over the disjoint union of $C$ itself and a line:
that the infinitesimal neighbourhood is indeed trivial in this case
follows from the argument
given in \cite[p.~875]{beauville-merindol}.
We get a $2$-dimensional family by letting the line $\Gamma$ move
freely in $\P^2$:
the double cover of the blow-up remembers the $5$ points of
$\Gamma\cap C$, hence
two general lines give two non-isomorphic surfaces. 
This proves \ref{th:K3triv}.

Eventually, to prove \ref{th:Totaro}
we use Totaro's
construction as in \cite[\S 3.2]{cd-higher}.
Namely we take $X$ the quintic hypersurface
in $\P(1^3,2,2^{13})$ given in homogeneous coordinates
$(\bx:y:\mathbf z)$
by
\[
  f(\bx) + G_1(\bx,y)z_1+ \cdots +
  G_{13}(\bx,y)z_{13}=0,
\]
where $G_1,\ldots,G_{13}$
form a basis of $H^0(Y,\O(3))$,
and embed it in $\P^{19}$ with the complete linear system $|\O_X(2)|$,
which is $\tfrac 1 {13}$ of the anticanonical series.
That $X\subset \P^{19}$ is arithmetically Gorenstein and normal
follows from the fact 
that it has canonical curves as linear sections as in \cite[\S 5]{cds-thomas}.
\end{proof}

\begin{stremark}\label{rk:genext}
Not all the $K3$ extensions of a plane quintic $C$ may be constructed
as in \ref{th:construction}, as a dimension count shows.
In fact, by \cite[Thm.~7.2]{saint-donat} if $S'$ is a smooth $K3$ surface
containing $C$, then $C \sim 2L+\Gamma'$ in $S'$, where $L$ and $\Gamma'$ are
irreducible curves of respective genera $2$ and $0$,
and $L \cdot \Gamma' = 1$.
The linear system $|L|$ induces a double cover $\pi:S' \to \P^2$
branched over a sextic $B \subset \P^2$ mapping $\Gamma'$ to a line
$\Gamma$, and there are the following two possibilities.

a) If $\Gamma'$ is contained in the ramification locus of $\pi$, then
$B$ contains $\Gamma$, and in this case $S'$ and its image $S$ by
$|C|$ are as in the Construction~\ref{th:construction}.

b) If $\Gamma'$ is not contained in the ramification locus of $\pi$,
then $\Gamma$ is a tri-tangent line to $B$ and $\pi^* \Gamma =
\Gamma'+\Gamma''$, with $\Gamma''$ another irreducible rational curve
such that $L \cdot \Gamma'' = 1$, and $\Gamma' \cdot \Gamma'' = 3$.
Then $L \sim \Gamma' + \Gamma''$ as it is linearly equivalent to the
pull-back of a line by $\pi$.
One thus has
\[
  C \sim 2L+\Gamma'
  \sim 3L-\Gamma'',
\]
and the general member of $|C|$ is mapped birationally by $\pi$ to a
smooth plane quintic everywhere tangent to $B$.
The map induced by $|C|$ contracts $\Gamma'$ and maps $\Gamma''$ to a
degree $5$ curve spanning a $\P^3$ and with a space triple point at
the image of $\Gamma'$.

Type b) extensions of $C$ are thus double covers of the plane
branched over a sextic $B$ which is everywhere tangent to the sextic
$C+\Gamma$ for some line $\Gamma$.
By \cite[Thm.~4.1]{cd-double} there is a $10$-dimensional family of
such $K3$ extensions for each choice of $\Gamma$, hence a
$12$-dimensional family if we let $\Gamma$ move.
The general extension of $C$ is thus of type b).

This may also be seen directly from the equations. Let $C$ be given
in $Y$ by equations $y=f(\bx)=0$ as in \eqref{th:CI}. Then the
general quintic $S$ in $Y$ containing $C$ has equation
\begin{equation}\label{th:geneq}
  \ell(\bx) y^2 + 2 g(\bx) y + f(\bx) = 0
\end{equation}
with $g$ a non-zero degree $3$ homogeneous polynomial, and $\ell$ as
before a non-zero linear form. To see this surface birationally as a
double cover of the plane, we put the equation \eqref{th:geneq} in the
form 
\begin{equation*}
  \ell \Bigl( y + \frac g \ell \Bigr)^2 +
  \Bigl(f- \frac {g^2} \ell \Bigr) = 0,
\end{equation*}
which shows that $S$ is birational to the double cover branched over
the plane sextic $(\ell f -g^2=0)$, which is everywhere tangent to
both $C$ and the line $(\ell = 0)$.
\end{stremark}

\begin{stremark}
As has been observed above in \ref{p:weighted} and \ref{rk:genext},
all $K3$ extensions of $C$ are singular.
This contrasts with the fact, pointed out to me by Edoardo Sernesi, that
the rational surface extensions of canonical plane curves constructed
in \cite[\S 9]{cds-thomas} 
have one elliptic singularity which in the case of quintics is smoothable.
These rational surface extensions may only be partially
smoothed, to $K3$ surfaces with one ordinary double point, 
meaning that there is a global obstruction to a total smoothing.
\end{stremark}

\begin{stremark}
As was the case for plane sextics \cite[Rmk~3.7]{cd-higher}, canonical
plane quintics provide curves which are not complete intersections,
for which the assumptions of either form of Theorem~\ref{zak} don't
hold, and which are extendable more than $\alpha$ times.

Moreover, canonical plane quintics show that Zak's claim
\cite[p.~278]{zak}, that if an $n$-dimensional $X \subset \P^N$ has
$\alpha(X)<(N-n-1)(N+1)$ then it is at most $\alpha(X)$-extendable,
cannot hold without an additional assumption.
\end{stremark}

\begin{stremark}
We can play the same game with plane quartics and cubics as with
sextics and quintics.
Let $\epsilon:P' \to \P^2$ be the blow-up at the $8$ (resp.\ $9$)
points of the transverse intersection
$\Gamma\cap C$, with $\Gamma$ a conic and $C$ a smooth quartic
(resp.\ $E_1\cap E_2$, with $E_1$ and $E_2$ two smooth cubics),
and $\pi: S' \to P'$ be the double cover branched over the proper
transforms of $\Gamma$ and $C$ (resp.\ $E_1$ and $E_2$). 
In the former case, the linear system $|-\pi^*K_{P'}|$ maps $S'$ to a
quartic surface $S$ in $\P(1^3,1)=\P^3$ with
an ordinary double point.
In the latter case we get what I would describe as a virtual cubic
surface in $\P(1^3,0)$; the linear system 
$|-\pi^*K_{P'}|$ is then of the form $|2F|$ with $F$ an
elliptic curve, hence gives a map to $\P^2$ contracting $S'$ onto a
smooth conic.
\end{stremark}

\vskip .3cm

\providecommand{\bysame}{\leavevmode\hbox to3em{\hrulefill}\thinspace}
\providecommand{\og}{``}
\providecommand{\fg}{''}
\providecommand{\smfandname}{and}
\providecommand{\smfedsname}{eds.}
\providecommand{\smfedname}{ed.}
\providecommand{\smfmastersthesisname}{M\'emoire}
\providecommand{\smfphdthesisname}{Th\`ese}

\end{document}